\pgfplotsset{compat=1.10}
\newtheorem{theorem}{Theorem}[section]
\newtheorem{corollary}[theorem]{Corollary}
\newtheorem{lemma}[theorem]{Lemma}
\newtheorem{proposition}[theorem]{Proposition}
\newtheorem{remark}[theorem]{Remark}
\newtheorem{example}[theorem]{Example}
\newcommand{\R}{\mathbb{R}}
\newcommand{\rd}{\mathrm{d}}
\definecolor{cadmiumgreen}{rgb}{0.0, 0.42, 0.24}
\numberwithin{equation}{section}
\numberwithin{figure}{section}
\begin{document}

\title[Shape derivative of the Dirichlet energy for a transmission problem]{Shape derivative of the Dirichlet energy for\\ a transmission problem}

\author{Philippe Lauren\c{c}ot}
\address{Institut de Math\'ematiques de Toulouse, UMR~5219, Universit\'e de Toulouse, CNRS \\ F--31062 Toulouse Cedex 9, France}
\email{laurenco@math.univ-toulouse.fr}
\author{Christoph Walker}
\address{Leibniz Universit\"at Hannover\\ Institut f\" ur Angewandte Mathematik \\ Welfengarten 1 \\ D--30167 Hannover\\ Germany}
\email{walker@ifam.uni-hannover.de}
%
\thanks{Partially supported by the CNRS Projet International de Coop\'eration Scientifique PICS07710}
\date{\today}
\keywords{Shape derivative, free boundary problem, transmission problem, obstacle problem, fourth-order equation}
\subjclass[2010]{49Q10 - 35J20 - 74G65 - 35R35 - 35Q74}
%
%
\begin{abstract}
For a transmission problem in a truncated two-dimensional cylinder located beneath the graph of a function $u$, the shape derivative of the Dirichlet energy (with respect to $u$) is shown to be well-defined and is computed in terms of $u$. The main difficulties in this context arise from the weak regularity of the domain and the possibly non-empty intersection of the graph of $u$ and the transmission interface. The explicit formula for the shape derivative is then used to identify the partial differential equation solved by the minimizers of an energy functional arising in the modeling of micromechanical systems.
\end{abstract}
%
\maketitle
%
\section{Introduction and Main Results}\label{IMR}

Given  $f\in H^{-1}(\R^n)$ and an open, bounded set $\mathcal{O}\subset\R^n$, let $\varphi_\mathcal{O}\in H_0^1(\mathcal{O})$ be the unique variational solution to the Dirichlet problem
\begin{equation}
-\Delta\varphi_\mathcal{O}=f\quad\text{in }\ \mathcal{O}\,,\qquad \varphi_\mathcal{O}=0\quad \text{on }\ \partial\mathcal{O}\,. \label{Dp}
\end{equation}
Introducing the Dirichlet integral
\begin{equation*}
J(\mathcal{O}):=\frac{1}{2}\int_\mathcal{O} \vert\nabla\varphi_\mathcal{O}\vert^2\,\rd x\,,
\end{equation*}
a classical result in shape optimization states that the shape derivative of $J(\mathcal{O})$ is given by
\begin{equation*}
J'(\mathcal{O})[\theta]:=\frac{\rd}{\rd t} J\big((\mathrm{id}+t\theta)(\mathcal{O})\big)\big\vert_{t=0} = \frac{1}{2} \int_\mathcal{O} \mathrm{div}\big(\vert\nabla\varphi_\mathcal{O}\vert^2 \theta\big)\,\rd x
\end{equation*}
for $\theta\in W_\infty^1(\R^n,\R^n)$ \cite{HP05,SZ92}. When the shape derivative is well-defined, it provides useful information on the Dirichlet energy itself and it is the basis for deriving first-order optimality conditions. However, the integral on the right-hand side of the shape derivative is only meaningful provided $\varphi_\mathcal{O}$ has sufficient regularity (typically $\varphi_\mathcal{O}\in H^2(\mathcal{O})$), which, in turn, requires sufficient regularity of the source term $f$ and the open set $\mathcal{O}$, see \cite[Corollary~5.3.8]{HP05} for instance. Source terms with low Sobolev regularity or depending on the admissible shape $\mathcal{O}$ in a non-smooth way are therefore excluded.

Amongst the simplest situations featuring such a dependence is the differentiability with respect to $\mathcal{O}$ of the Dirichlet energy
\begin{equation}
\mathcal{J}(\mathcal{O}) := \frac{1}{2} \int_\mathcal{O} |\nabla\psi_\mathcal{O}|^2\,\rd x\,, \label{y0}
\end{equation}
associated with Laplace's equation subject to non-homogeneous Dirichlet boundary conditions
\begin{equation*}
-\Delta\psi_\mathcal{O}= 0\quad\text{in }\ \mathcal{O}\,,\qquad \psi_\mathcal{O}= h_\mathcal{O} \quad \text{on }\ \partial\mathcal{O}\,, \end{equation*}
where $h_\mathcal{O}$ is a given function in $H^1(\mathcal{O})$, depending on $\mathcal{O}$ in general. In that particular case, $\psi_\mathcal{O}$ may be interpreted as the electrostatic potential inside $\mathcal{O}$ and $h_\mathcal{O}$ is the potential applied on $\partial\mathcal{O}$. Computing the shape derivative $\mathcal{J}'(\mathcal{O})$ of $\mathcal{J}(\mathcal{O})$ is then of practical importance, since $\mathcal{J}'(\mathcal{O})$ is the electrostatic force acting on $\partial\mathcal{O}$ \cite{FLM12,CDLM,CLWZ13}. Introducing $\varphi_\mathcal{O} := \psi_\mathcal{O} - h_\mathcal{O}$, we see that $\varphi_\mathcal{O}$ solves \eqref{Dp} with $f=\Delta h_\mathcal{O}\in H^{-1}(\mathcal{O})$ and $\mathcal{J}'(\mathcal{O})$ obviously involves the shape derivative of $h_\mathcal{O}$. Summarizing, the shape differentiability of $\mathcal{J}(\mathcal{O})$ relies on the Sobolev regularity of $\varphi_\mathcal{O}$ which is not only governed by that of $h_\mathcal{O}$ but also by the smoothness of $\partial\mathcal{O}$.

The situation just depicted above is actually met in applications, for example, when considering electrostatic actuators consisting of a rigid electrode above which a moving electrode is suspended, both being held at different potentials \cite{BG01}. For an idealized device with simplified geometry, the rigid electrode is the set $D\times\{-H\}$ located at vertical height $-H<0$ with $D:=(-L,L)$, $L>0$, and the shape depends only on the position of the moving electrode, which is assumed to be the graph of a function $u$ ranging in $(-H,\infty)$. The shape $\mathcal{O}(u)$ is then given by
$$
\mathcal{O}(u):=\{(x,z)\in D\times \R\,:\, -H<z<u(x)\}\,.
$$
The corresponding electrostatic potential $\psi_u$ solves Laplace's equation
$-\Delta\psi_u=0$ in $\mathcal{O}(u)$ with non-homogeneous Dirichlet boundary conditions $\psi_u=h_u\not\equiv \mathrm{const}$ on $\partial\mathcal{O}(u)$, reflecting the potential difference. A possible choice for $h_u$ is 
\begin{equation}
h_u(x,z)=\frac{H+z}{H+u(x)}\,, \qquad (x,z)\in\mathcal{O}(u)\,, \label{y1}
\end{equation} 
which corresponds to both electrodes being held at constant potentials and features an explicit dependence on $u$. Incorporating the boundary values into the electrostatic potential by setting $\varphi_u:=\psi_u-h_u$, one obtains that $\varphi_u\in H_0^1(\mathcal{O}(u))$ solves the Dirichlet problem
\begin{equation}\label{y2}
-\Delta\varphi_u=f_u\quad\text{in }\ \mathcal{O}(u)\,,\qquad \varphi_u=0\quad \text{on }\ \partial\mathcal{O}(u)\,,
\end{equation}
where the regularity of the source term $ f_u:=\Delta h_u$ turns out to be two order less than that of~$u$ \cite{LWBible}; that is, $f_u\in H^{k-2}(\mathcal{O}(u))$ only if $u\in H^k(D)$, a property obviously satisfied for the choice \eqref{y1}. Consequently, application of the above mentioned result to compute the derivative of $\mathcal{J}(\mathcal{O}(u))$ with respect to $u$ requires {\it a priori} a sufficiently high regularity of $u$ and hence of the shape $\mathcal{O}(u)$, which may not be available for the problem under consideration. Indeed, the regularity of the solution $\varphi_u$ to \eqref{y2} is not only controlled by that of $u$, but also limited by the fact that $\mathcal{O}(u)$ is only a Lipschitz domain, so that one may only expect $\varphi_{u}\in H^{\min\{k,3/2\}}(\mathcal{O}(u))$ for $u\in H^k(D)$ in general. This restricted regularity does not seem to be sufficient to give a meaning to the shape derivative of $\mathcal{J}(\mathcal{O}(u))$. Nevertheless, for this particular case (and under suitable assumptions) we show in \cite{LW14,LW16} that $\varphi_{u}\in H^2(\mathcal{O}(u))$ for $u\in H_0^1(D)\cap H^{\alpha}(D)$ with $\alpha>3/2$ and that $\mathcal{J}(\mathcal{O}(u))$ has a shape derivative, which is well-defined and given by 

$$
\mathcal{J}'(\mathcal{O}(u))(x) = \frac{1}{2} |\nabla\psi_u(x,u(x))|^2\,,\quad x\in D\,,
$$
as expected. 
 
 \begin{figure}
 	\begin{tikzpicture}[scale=0.9]
 	\draw[black, line width = 1.5pt, dashed] (-7,0)--(7,0);
 	\draw[black, line width = 2pt] (-7,0)--(-7,-5);
 	\draw[black, line width = 2pt] (7,-5)--(7,0);
 	\draw[black, line width = 2pt] (-7,-5)--(7,-5);
 	\draw[black, line width = 2pt] (-7,-4)--(7,-4);
 	\draw[black, line width = 2pt, fill=gray, pattern = north east lines, fill opacity = 0.5] (-7,-4)--(-7,-5)--(7,-5)--(7,-4);
 	\draw[cadmiumgreen, line width = 2pt] plot[domain=-7:7] (\x,{-1-cos((pi*\x/7) r)});
 	\draw[blue, line width = 2pt] plot[domain=-7:-3] (\x,{-2-2*cos((pi*(\x+3)/4) r)});
 	\draw[blue, line width = 2pt] (-3,-4)--(1,-4);
 	\draw[blue, line width = 2pt] plot[domain=1:7] (\x,{-2-2*cos((pi*(\x-1)/6) r)});
 	\draw[cadmiumgreen, line width = 1pt, arrows=->] (3,0)--(3,-1.15);
 	\node at (3.2,-0.6) {${\color{cadmiumgreen} v}$};
 	\draw[blue, line width = 1pt, arrows=->] (-5,0)--(-5,-1.85);
 	\node at (-4.8,-1) {${\color{blue} w}$};
 	\node[draw,rectangle,white,fill=white, rounded corners=5pt] at (2,-4.5) {$\Omega_1$};
 	\node at (2,-4.5) {$\Omega_1$};
 	\node at (-2,-3) {${\color{cadmiumgreen} \Omega_2(v)}$};
 	\node at (-5.75,-2.5) {${\color{blue} \mathcal{O}_1(w)}$};
 	\node at (5.5,-2.5) {${\color{blue} \mathcal{O}_2(w)}$};
 	\node at (3.75,-5.75) {$D$};
 	\node at (7.75,-3) {$\Sigma$};
 	\draw (7.5,-3) edge[->,bend right, line width = 1pt] (5.5,-3.9);
 	\node at (-7.8,1) {$z$};
 	\draw[black, line width = 1pt, arrows = ->] (-7.5,-6)--(-7.5,1);
 	\node at (-8.4,-5) {$-H-d$};
 	\draw[black, line width = 1pt] (-7.6,-5)--(-7.4,-5);
 	\node at (-8,-4) {$-H$};
 	\draw[black, line width = 1pt] (-7.6,-4)--(-7.4,-4);
 	\node at (-7.8,0) {$0$};
 	\draw[black, line width = 1pt] (-7.6,0)--(-7.4,0);
 	\node at (0,-5.5) {$2L$};
 	\draw[black, line width = 1pt, arrows = <->] (-7,-5.25)--(7,-5.25);
 	\node at (1,-3) {${\color{blue} \mathcal{C}(w)}$};
 	\draw (0.45,-3) edge[->,bend right,blue, line width = 1pt] (-0.5,-3.95);
 	\node at (0.5,-1) {${\color{cadmiumgreen} \mathfrak{G}(v)}$};
 	\draw (0,-1) edge[->,bend right,cadmiumgreen, line width = 1pt] (-1,-1.85);
 	\node at (-2.9,-2) {${\color{blue} \mathfrak{G}(w)}$};
 	\draw (-3.5,-2) edge[->,bend right,blue, line width = 1pt] (-4.3,-2.9);
 	\end{tikzpicture}
 	\caption{Geometry of $\Omega(u)$ for a state $u=v$ with empty coincidence set (\textcolor{cadmiumgreen}{green}) and a state $u=w$ with non-empty coincidence set (\textcolor{blue}{blue}).}\label{Fig1}
 \end{figure}
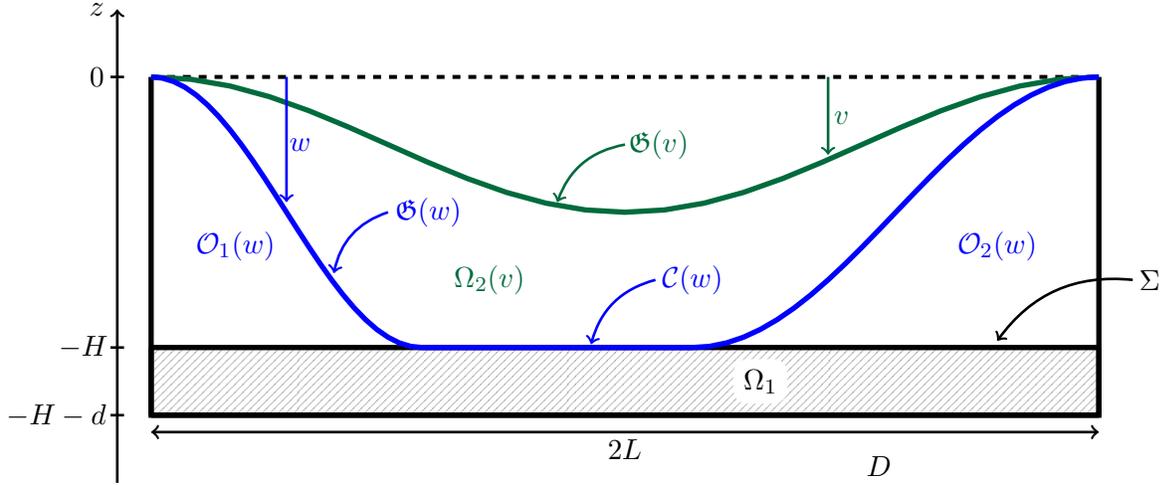
 
Another instance, where a similar difficulty arises, is when the solution to the Dirichlet problem~\eqref{y2} is replaced by the solution to a transmission problem, where the boundary of the domain may contact the transmission interface. Such a situation is encountered in the modeling of microelectromechanical systems (MEMS) \cite{AmEtal,BG01,LW18} and actually provides the impetus for the research performed herein. More details will be given in Section~\ref{LESSMM} below, where the shape derivative computed in Theorems~\ref{Thm1bb} and~\ref{Thm1b} is used to show the existence of stationary solutions to a MEMS model. In such a problem, the geometry of the admissible shapes looks similar to the class $\mathcal{O}(u)$ described above and is defined as follows: Let $H$, $L$, $d>0$ be three positive parameters and set $D:=(-L,L)$. Given a real-valued function $u$ defined on the interval $D$ and ranging in $[-H,\infty)$, the admissible shape $\Omega(u)$ consists of two subregions
$$
 \Omega_1:=D\times (-H-d,-H)
$$
and
$$
 \Omega_2(u):=\left\{(x,z)\in D\times \mathbb{R}\,:\, -H<  z <  u(x)\right\}\,,
$$
which are separated by the interface
$$
\Sigma(u):=\{(x,-H)\,:\, x\in D,\, u(x)>-H\}\,, 
$$
see Figure~\ref{Fig1}; that is,
$$
\Omega({u}):=\left\{(x,z)\in D\times \mathbb{R} \,:\, -H-d<  z <  u(x)\right\}=\Omega_1\cup  \Omega_2( {u})\cup \Sigma(u)\,.
$$
Let us emphasize that we explicitly allow the graph $\mathfrak{G}(u)$ of $u$, defined by
\begin{equation}
\mathfrak{G}(u):=\{(x,u(x))\,:\, x\in D\}\,, \label{Gu}
\end{equation}
to intersect the interface $\Sigma:=D\times\{-H\}$; that is, the \textit{coincidence set} 
\begin{equation}
\mathcal{C}(u):=\{x\in D\,:\, u(x)=-H\} \label{CS}
\end{equation}
of $u$ may be non-empty, resulting in a disconnected top part $\Omega_2(u)$ with connected components $(\mathcal{O}_i(u))_i$ -- see the blue curve in Figure~\ref{Fig1}. If $\mathcal{C}(u)$ is empty -- see the green curve in Figure~\ref{Fig1} -- then
$$
\Sigma(u)=\Sigma=D\times\{-H\}\,.
$$
The dielectric properties of $\Omega_1$ and $\Omega_2(u)$ being different with a jump discontinuity at the interface $\Sigma(u)$, the potential $\psi_u\in H^1(\Omega(u))$ under consideration in this paper is defined as the variational solution to the transmission problem
\begin{subequations}\label{TMP}
\begin{align}
\mathrm{div}(\sigma\nabla\psi_u)&=0 \quad\text{in }\ \Omega(u)\,, \label{TMP1}\\
\llbracket \psi_u \rrbracket = \llbracket \sigma \partial_z \psi_u \rrbracket &=0\quad\text{on }\ \Sigma(u)\,, \label{TMP2}\\
\psi_u&=h_u\quad\text{on }\ \partial\Omega(u)\,, \label{TMP3}
\end{align}   
\end{subequations}
where  $\llbracket\cdot\rrbracket$ denotes the jump across $\Sigma(u)$. Here and in the following, $\sigma_1\in C^2({\overline\Omega_1})$ with $\sigma_1(x,z)> 0$, $\sigma_2>0$  is a positive constant (with $\sigma_1(\cdot,-H)\not\equiv\sigma_2$), and  $h_u\in H^1(\Omega(u))$ is a given function defining the boundary values of $\psi_u$ on $\partial\Omega(u)$. The associated Dirichlet energy is
\begin{equation}
\mathfrak{J}(u):= \frac{1}{2}\int_{\Omega(u)} \sigma \vert\nabla \psi_u\vert^2\,\rd (x,z)\,. \label{DE}
\end{equation}
The main contribution of the present research is the computation of the shape derivative of $\mathfrak{J}(u)$ with respect to $u$ in an appropriate functional setting. Several steps are needed to achieve this goal. According to the discussion above, the first step is to derive sufficient regularity on $\psi_u$, keeping in mind that $\psi_u$ depends on $u$ not only through $\Omega_2(u)$, but also through $h_u$. An appropriate functional setting for $u$ turns out to be the set 
$$
\bar S:=\{u\in H^2(D) \cap H_0^1(D)\,:\, u\ge -H \text{ in } D\}\,.
$$ 
Let us already point out that $\mathcal{C}(u)=\emptyset$ if and only if
$$
u\in S:=\{u\in H^2(D) \cap H_0^1(D)\,:\, u> -H \text{ in } D\}\,.
$$
The variational setting for the potential $\psi_u$ is then 
$$
\mathcal{A}(u):=h_{u} + H_{0}^1(\Omega(u))\,,
$$
where the boundary values $h_u$ are defined by
$$
h_u(x,z):= h(x,z,u(x)) = \left\{ \begin{array}{ll}
h_1(x,z,u(x))\,, & (x,z)\in \overline\Omega_1\,, \\
h_2(x,z,u(x))\,, & (x,z)\in \overline{\Omega_2(u)}\,,
\end{array} \right.
$$
the given function $h$ satisfying \eqref{bobbybrown} below. The well-posedness of \eqref{TMP} is provided by the following result.

\begin{theorem}\label{Thm1}
Let the function $h$ satisfy \eqref{bobbybrown} below. 
\begin{itemize}
\item[(a)] For each $u\in \bar S$, there is a unique variational solution $\psi_u \in \mathcal{A}(u)$ to \eqref{TMP}.  Moreover, $\psi_{u,1}:= \psi_{u}\vert_{\Omega_1} \in H^2(\Omega_1)$ and $\psi_{u,2} := \psi_{u}\vert_{\Omega_2(u)} \in H^2(\Omega_2(u))$, and $\psi_{u}$  is a strong solution to the transmission problem~\eqref{TMP} satisfying $\sigma\partial_z \psi_u\in H^1(\Omega(u))$.
\item[(b)] Given $\kappa>0$, there is $c(\kappa)>0$ such that, for all $u\in\bar S$ satisfying $\|u\|_{H^2(D)}\le \kappa$, 
\begin{equation*}
\|\psi_u\|_{H^1(\Omega(u))} + \|\sigma\partial_z \psi_u\|_{H^1(\Omega(u))} + \|\psi_{u,1}\|_{H^2(\Omega_1)} + \|\psi_{u,2}\|_{H^2(\Omega_2(u))} \le c(\kappa)\,.
\end{equation*}	
\end{itemize}
\end{theorem}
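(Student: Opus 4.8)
The plan is to construct the variational solution by a standard energy argument and then bootstrap its regularity, working separately on $\Omega_1$ and $\Omega_2(u)$ and exploiting that the interface $\Sigma(u)$ is flat. For part~(a), I would introduce the bilinear form
$$
a_u(\phi,\chi) := \int_{\Omega(u)} \sigma\, \nabla\phi\cdot\nabla\chi \,\rd(x,z)\,,\qquad \phi,\chi\in H_0^1(\Omega(u))\,,
$$
and look for $\phi_u := \psi_u - h_u \in H_0^1(\Omega(u))$ solving $a_u(\phi_u,\chi) = -a_u(h_u,\chi)$ for every test function $\chi\in H_0^1(\Omega(u))$. Since $\sigma \ge \min\{\sigma_2,\min_{\overline{\Omega}_1}\sigma_1\} > 0$ and $\Omega(u)$ is bounded in the horizontal variable by the fixed interval $D$, the Poincar\'e inequality makes $a_u$ coercive and bounded on $H_0^1(\Omega(u))$, so Lax--Milgram yields a unique $\phi_u$, hence a unique $\psi_u\in\mathcal{A}(u)$. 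Testing against $\chi\in C_c^\infty(\Omega_1)$ and $\chi\in C_c^\infty(\Omega_2(u))$ separately shows that $\mathrm{div}(\sigma\nabla\psi_u)=0$ holds in each subregion; integrating by parts over the two subregions and using that $\Sigma(u)$ is horizontal then recovers $\llbracket\sigma\partial_z\psi_u\rrbracket=0$ weakly on $\Sigma(u)$, while $\llbracket\psi_u\rrbracket=0$ is automatic because $\psi_u\in H^1(\Omega(u))$.

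The core of part~(a) is the piecewise $H^2$-regularity and the claim $\sigma\partial_z\psi_u\in H^1(\Omega(u))$. Here I would first establish \emph{tangential} regularity in the horizontal variable $x$ by the difference-quotient method: since $\Sigma(u)$ lies in the fixed level $\{z=-H\}$, the horizontal direction is tangential to the interface, and differentiating the equation in $x$ (the coefficient $\sigma_1\in C^2(\overline{\Omega}_1)$ contributing only controllable lower-order terms, and $\sigma=\sigma_2$ being constant in $\Omega_2(u)$) produces a bound on $\partial_x\nabla\psi_u$ in $L^2$. The \emph{normal} regularity then comes from the equation itself: writing $\partial_z(\sigma\partial_z\psi_u) = -\partial_x(\sigma\partial_x\psi_u)\in L^2$ on each subregion gives $\sigma\partial_z\psi_u\in H^1(\Omega_1)$ and $\sigma\partial_z\psi_u\in H^1(\Omega_2(u))$ separately, and because the transmission condition forces the traces of $\sigma\partial_z\psi_u$ to agree across $\Sigma(u)$, these patch to $\sigma\partial_z\psi_u\in H^1(\Omega(u))$. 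Dividing by the piecewise-smooth, positive coefficient $\sigma$ finally yields $\psi_{u,1}\in H^2(\Omega_1)$ and $\psi_{u,2}\in H^2(\Omega_2(u))$, whence $\psi_u$ is a strong solution.

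To carry out the tangential estimate \emph{up to the top graph} $\mathfrak{G}(u)$, the domain is no longer invariant under horizontal translation, so I would flatten $\Omega_2(u)$ by a change of variables adapted to $u$, mapping each connected component $\mathcal{O}_i(u)$ onto a reference rectangle via $(x,z)\mapsto\big(x,(z+H)/(u(x)+H)\big)$; this transforms $\Delta$ into a uniformly elliptic operator whose coefficients involve $u'$, $u''$ and $(u+H)^{-1}$ and are therefore controlled by $\|u\|_{H^2(D)}$ through the embedding $H^2(D)\hookrightarrow C^1(\overline{D})$. The \textbf{main obstacle} is precisely the behavior near the \emph{contact points}, the endpoints of $\Sigma(u)$ on the coincidence set $\mathcal{C}(u)$, where $\mathfrak{G}(u)$ touches $\Sigma$: there the flattening degenerates as $u+H\to0$, and the interface of discontinuity of $\sigma$ abuts the outer boundary, a configuration to which classical corner and transmission regularity theory does not apply directly. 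I expect this to require a localized argument exploiting that $u\in H^2(D)$ forces $u'\to0$ at an interior contact point—so that $\Omega_2(u)$ degenerates into a cusp whose geometry is, if anything, favorable for one-sided regularity—possibly combined with an approximation of $u\in\bar S$ by functions in $S$ with empty coincidence set and a passage to the limit using the uniform estimates of part~(b).

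For part~(b) I would simply track the constants through each step above. The coercivity and Poincar\'e constants are uniform because every $\Omega(u)$ is a subgraph domain below a graph with $\|u\|_{C^1(\overline{D})}$ bounded by $c(\kappa)$, hence the family is uniformly Lipschitz; the coefficients of the flattening map, and thus the ellipticity and continuity constants of the transformed operator, depend only on $\|u\|_{C^1(\overline{D})}$ and on the lower bound for $u+H$; and the difference-quotient and elliptic estimates inherit their constants from these. Assembling these uniform bounds gives the desired estimate on $\|\psi_u\|_{H^1(\Omega(u))}+\|\sigma\partial_z\psi_u\|_{H^1(\Omega(u))}+\|\psi_{u,1}\|_{H^2(\Omega_1)}+\|\psi_{u,2}\|_{H^2(\Omega_2(u))}$ by a single constant $c(\kappa)$.
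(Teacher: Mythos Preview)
Your outline is sound for the existence of the variational solution and for the qualitative $H^2$-regularity when $u\in S$ stays strictly above $-H$: Lax--Milgram, tangential difference quotients, normal regularity from the equation, and patching $\sigma\partial_z\psi_u$ across the flat interface all work in that regime and match the paper's treatment (the paper cites \cite{Lem77} for this step rather than redoing it). The real gap is in the uniform bound~(b), and it is exactly the point you flag but do not resolve.

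You say the flattening $(x,z)\mapsto(x,(z+H)/(u(x)+H))$ turns $\Delta$ into an elliptic operator whose coefficients ``involve $u'$, $u''$ and $(u+H)^{-1}$ and are therefore controlled by $\|u\|_{H^2(D)}$''. This is not true: $(u+H)^{-1}$ is controlled by a lower bound on $u+H$, not by $\|u\|_{H^2(D)}$, and you acknowledge this two paragraphs later when you write that the ellipticity constants ``depend \ldots on the lower bound for $u+H$''. Standard elliptic estimates on the flattened rectangle then carry that dependence, so the resulting $H^2$-bound blows up along any approximating sequence $u_n\in S$ with $\min_D(u_n+H)\to 0$, and the limit argument you propose for $u\in\bar S\setminus S$ cannot be executed. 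This is not a cosmetic issue: obtaining $c(\kappa)$ independent of $\min_D(u+H)$ is the entire technical content of the theorem.

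The paper circumvents this by \emph{not} relying on elliptic regularity for the transformed operator. Instead it derives, for $v\in S\cap W_\infty^2(D)$, an exact integral identity (Lemma~\ref{L2}, itself resting on a Grisvard-type identity, Lemma~\ref{L1}) of the form
\[
\int_{\Omega_1\cup\Omega_2(v)} \mathrm{div}(\sigma\nabla\chi)\,\partial_z^2\chi
= \int_{\Omega_1\cup\Omega_2(v)} \partial_z(\sigma\partial_x\chi)\,\partial_x\partial_z\chi + \partial_z(\sigma\partial_z\chi)\,\partial_z^2\chi
+ (\text{two boundary terms}),
\]
one boundary term living on the fixed interface $\Sigma$ and the other being $-\tfrac{\sigma_2}{2}\int_D\partial_x^2 v\,(\partial_z\chi_2(\cdot,v))^2$. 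The miracle is that, although the derivation uses the singular map $T_2$, all inverse powers of $H+v$ cancel in the final identity. The only dependence on $v$ left is through $\|\partial_x^2 v\|_{L_2(D)}$ and through the trace $\partial_z\chi_2(\cdot,v)$ on the graph, and the latter is estimated (Lemma~\ref{L4}) via a \emph{different} transformation $\mathfrak{T}_v(x,z)=(x,(H+d+z)/(H+d+v))$ of the whole domain $\Omega(v)$, whose Jacobian is uniformly bounded since $H+d+v\ge d$. This yields Proposition~\ref{P2} with constants depending only on $\kappa$. The extension to $\bar S$ is then done by a $\Gamma$-convergence argument (Proposition~\ref{P3}, Corollary~\ref{C3}) to identify the limit of $\psi_{v_n}-h_{v_n}$, combined with these uniform bounds. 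Your proposal is missing both ingredients: the cancellation identity that kills the $(u+H)^{-1}$ dependence, and a mechanism (here $\Gamma$-convergence) to pass to the limit in the solution itself.
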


\begin{proof}
This follows from Proposition~\ref{P1} and Corollary~\ref{P1b}.
\end{proof}

While the existence and uniqueness of $\psi_u\in\mathcal{A}(u)$ as a variational solution to \eqref{TMP} are straightforward consequences of Lax-Milgram's theorem, the $H^2$-regularity is more involved, in particular when the coincidence set $\mathcal{C}(u)$ is non-empty. In that case, $\Omega_2(u)$ is not connected and has a non-Lipschitz boundary due to turning points $(x_0,-H)\in \partial\Omega_2(u)$ with $u(x_0)+H=\partial_x u(x_0)=0$. The first issue is then to have a meaningful definition of the trace on the boundary of $\Omega_2(u)$. This is possible here, thanks to the specific geometry of $\Omega_2(u)$ which is enclosed by the graphs of two Lipschitz continuous functions, a feature which has already been noticed in the literature, see \cite{AADL05, MNP00} for instance. Once the issue of traces is settled, we still face the difficulty that $\psi_u$ satisfies the transmission conditions \eqref{TMP2} on $\Sigma(u)\ne \Sigma$ but is subject to the Dirichlet boundary conditions $\psi_u=h_u$ on $\Sigma\setminus\Sigma(u)$. 

We shall thus begin with the simplest situation, where the coincidence set $\mathcal{C}(u)$ is empty -- see the green curve in Figure~\ref{Fig1}. For smooth functions $u\in S\cap W_\infty^2(D)$, the piecewise $H^2$-regularity of solutions to the transmission problem \eqref{TMP} is known \cite{Lem77}. The strategy to extend it to arbitrary functions in $\bar S$ requires to overcome the above mentioned difficulties and includes two steps: on the one hand, we derive quantitative estimates on $\psi_u$ in $H^2(\Omega_1)$ and $H^2(\Omega_2(u))$ for $u\in S\cap W_\infty^2(D)$, which depend, neither on the $W_\infty^2$-norm of $u$, nor on the positivity of $u+H$, as stated in Theorem~\ref{Thm1}~\textbf{(b)}. On the other hand, we show that $u\mapsto \psi_u-h_u$ is a continuous map from $\bar S$ to $H^1(\mathbb{R}^2)$ when $\bar S$ is endowed with the topology of $H^1_0(D)$, the proof relying on the $\Gamma$-convergence of the functionals associated with the variational formulation defining $\psi_u$. Combining these two results leads us to Theorem~\ref{Thm1}. 

\begin{remark}\label{ECwasHere}
As already pointed out, for $u\in\bar{S}\setminus S$, the connected components of $\Omega_2(u)$ are not Lipschitz domains, as they feature at least one cuspidal point $(x_0,-H)\in \partial\Omega_2(u)$ with $u(x_0)+H=\partial_x u(x_0)=0$. Thus, the $H^2(\Omega_2(u))$-regularity of $\psi_{u,2}$ does not guarantee \textit{a priori} well-defined traces on the boundary of such connected components for $\psi_{u,2}$ and $\partial_z \psi_{u,2}$. Nevertheless, these traces are here well-defined, owing to the $H^1(\Omega(u))$-regularity of both $\psi_u$ and $\partial_z \psi_u$, recalling that $\Omega(u)$ as a whole is obviously a Lipschitz domain, see Theorem~\ref{Thm1}~(a). Note that no such issue arises in the rectangle $\Omega_1$.
\end{remark}

Next, due to the regularity properties of $\psi_u$ provided by Theorem~\ref{Thm1}, we can compute the shape derivative of the Dirichlet energy $\mathfrak{J}(u)$ with respect to $u\in S$ in a classical way \cite{HP05,SZ92}.

\begin{theorem}\label{Thm1bb}
Let the function $h$ satisfy \eqref{bobbybrown} below and consider $u\in S$. Introducing
\begin{equation*}
\mathfrak{g}(u)(x) := \frac{\sigma_2}{2} \big(1+(\partial_x u(x))^2\big)\,\left[\partial_z\psi_{u,2}-(\partial_z h_2)_u-(\partial_w h_2)_u\right]^2(x, u(x))\,, \qquad x\in D\,,
\end{equation*}
and endowing $S$ with the $H^2(D)$-topology, the Dirichlet energy $\mathfrak{J}:S\rightarrow\R$ defined in \eqref{DE} is continuously Fr\'echet differentiable with
\begin{equation*}
\begin{split}
\partial_u \mathfrak{J}(u)(x)
=& -\mathfrak{g}(u)(x) +\frac{\sigma_2}{2} \,\left[ \big((\partial_x h_2)_u\big)^2+ \big((\partial_z h_2)_u+(\partial_w h_2)_u\big)^2 \right](x, u(x)) \\
& - \left[ \sigma_1 (\partial_w h_1)_u\,\partial_z\psi_{u,1} \right](x,-H-d)
\end{split}
\end{equation*}
for $u\in S$ and $x\in D$, where $(\psi_{u,1},\psi_{u,2})$ is defined in Theorem~\ref{Thm1}, 
\begin{equation*}
(\partial_w h_1)_u(x,z):= \partial_w h_1(x,z,u(x))\,, \qquad (x,z)\in \overline\Omega_1\,,
\end{equation*} 
and
\begin{equation*}
\big( (\partial_x h_2)_u , (\partial_z h_2)_u , (\partial_w h_2)_u  \big)(x,z) := \left( \partial_x h_2, \partial_z h_2, \partial_w h_2 \right)(x,z,u(x))
\end{equation*}
for $(x,z)\in \overline{\Omega_2(u)}$.
\end{theorem}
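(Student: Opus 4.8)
The plan is to freeze the geometry by transporting the varied domain $\Omega(u+s\vartheta)$ back onto the fixed domain $\Omega(u)$, to differentiate the resulting energy on $\Omega(u)$ at $s=0$, and to convert the outcome into a boundary integral by integration by parts, the crucial input being the piecewise $H^2$-regularity of $\psi_u$ from Theorem~\ref{Thm1}. Since $u\in S$, the coincidence set is empty, $\Omega_2(u)$ is connected with a $C^1$ (hence Lipschitz) boundary and no cuspidal points, so all traces and integrations by parts below are licit.

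First I would fix $u\in S$ and a direction $\vartheta\in H^2(D)\cap H_0^1(D)$ and introduce the diffeomorphism $\Theta_s:\Omega(u)\to\Omega(u+s\vartheta)$ leaving $\Omega_1$ unchanged and acting on $\Omega_2(u)$ by
\[
\Theta_s(x,z)=\Big(x,\,-H+(z+H)\tfrac{u(x)+s\vartheta(x)+H}{u(x)+H}\Big),
\]
so that $\Theta_s$ fixes the interface $\Sigma$ and carries $\mathfrak{G}(u)$ onto $\mathfrak{G}(u+s\vartheta)$; as $u+H$ is bounded away from $0$ on $D$ (because $u\in S$), $\Theta_s$ is bi-Lipschitz for $|s|$ small, with velocity $\theta:=\partial_s\Theta_s\vert_{s=0}$ vanishing on $\Omega_1$, equal to $(0,(z+H)\vartheta(x)/(u(x)+H))$ on $\Omega_2(u)$, hence vanishing on $\Sigma$ and with trace $(0,\vartheta)$ on $\mathfrak{G}(u)$. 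Setting $\hat\psi_s:=\psi_{u+s\vartheta}\circ\Theta_s$, the change of variables turns \eqref{DE} into $\mathfrak{J}(u+s\vartheta)=\tfrac12\int_{\Omega(u)}B_s\nabla\hat\psi_s\cdot\nabla\hat\psi_s\,\rd(x,z)$ with $B_s:=(\sigma\circ\Theta_s)\,\vert\det D\Theta_s\vert\,(D\Theta_s)^{-1}(D\Theta_s)^{-T}$, while $\hat\psi_s\in h_{u+s\vartheta}\circ\Theta_s+H_0^1(\Omega(u))$ solves the transported weak problem $\int_{\Omega(u)}B_s\nabla\hat\psi_s\cdot\nabla\phi\,\rd(x,z)=0$ for all $\phi\in H_0^1(\Omega(u))$.

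The heart of the argument is to show that $s\mapsto\hat\psi_s$ is differentiable at $s=0$ in $H^1(\Omega(u))$, with material derivative $\dot\psi$; more robustly, I would prove that the analogous transport of $\psi_v$ onto the fixed domain $\Omega(u)$ depends in a $C^1$ manner on $v$ in a neighborhood of $u$ in $H^2(D)\cap H_0^1(D)$, by differentiating the transported variational identity (Lax--Milgram provides the solve, the smooth dependence of $B_v$ and of the transported datum on $v$ provides the differentiability, and the uniform coercivity together with the bounds of Theorem~\ref{Thm1}(b) provides the required continuity). Granting this, $\mathfrak{J}$ is $C^1$ on $S$ as a composition of $C^1$ maps, and the chain rule yields
\[
\partial_u\mathfrak{J}(u)[\vartheta]=\tfrac12\int_{\Omega(u)}\dot B\,\nabla\psi_u\cdot\nabla\psi_u\,\rd(x,z)+\int_{\Omega(u)}\sigma\nabla\psi_u\cdot\nabla\dot\psi\,\rd(x,z),
\]
where $\dot B:=\partial_s B_s\vert_{s=0}=\sigma\big((\mathrm{div}\,\theta)I-D\theta-D\theta^{T}\big)$ (using that $\sigma$ is constant on $\Omega_2(u)$ and $\theta\equiv0$ on $\Omega_1$). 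Writing $\dot\psi=\dot h+\chi$ with $\chi\in H_0^1(\Omega(u))$ and $\dot h:=\partial_s(h_{u+s\vartheta}\circ\Theta_s)\vert_{s=0}$, the weak formulation for $\psi_u$ kills the $\chi$-contribution, leaving the explicit datum $\dot h$, which the chain rule gives as $(\partial_z h_2)_u\,\theta_z+(\partial_w h_2)_u\,\vartheta$ on $\Omega_2(u)$ and $(\partial_w h_1)_u\,\vartheta$ on $\Omega_1$.

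Finally I would integrate the two remaining terms by parts over $\Omega_1$ and $\Omega_2(u)$ separately. The first term is the divergence of the energy--momentum tensor $\sigma\big(\nabla\psi_u\otimes\nabla\psi_u-\tfrac12\vert\nabla\psi_u\vert^2 I\big)$ tested against $\theta$; since $\mathrm{div}(\sigma\nabla\psi_u)=0$ in each subdomain, this tensor is divergence-free and the term collapses to a boundary integral. The interface $\Sigma$ produces no contribution from this term because $\theta$ vanishes there, whereas the $\dot h$-term over $\Sigma$ cancels between the two sides owing to $\llbracket\sigma\partial_z\psi_u\rrbracket=0$ and the continuity of $\partial_w h$ across $\Sigma$ encoded in \eqref{bobbybrown}; the lateral sides $x=\pm L$ drop out since $\vartheta\in H_0^1(D)$ forces $\theta$ and $\dot h$ to vanish there. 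What remains are integrals over $\mathfrak{G}(u)$ and over the bottom $D\times\{-H-d\}$. Parametrizing $\mathfrak{G}(u)$ by $x\mapsto(x,u(x))$ absorbs the surface element $\sqrt{1+(\partial_x u)^2}$ and the factor $\theta\cdot n$ into $\vartheta(x)\,\rd x$, which exhibits the density $\partial_u\mathfrak{J}(u)(x)$, while the bottom contribution comes solely from the $\dot h$-term and yields $-[\sigma_1(\partial_w h_1)_u\,\partial_z\psi_{u,1}](x,-H-d)$. To match the stated form of the $\mathfrak{G}(u)$-density I would use that $\psi_{u,2}=h_2(\cdot,u,u)$ on the graph and differentiate this identity tangentially, trading $\partial_x\psi_{u,2}$ against the data of $h_2$; this substitution produces the factor $1+(\partial_x u)^2$ and the combination $\partial_z\psi_{u,2}-(\partial_z h_2)_u-(\partial_w h_2)_u$ defining $\mathfrak{g}(u)$, and reorganizes the normal and tangential parts of $\vert\nabla\psi_{u,2}\vert^2$ into $-\mathfrak{g}(u)$ plus the bracket $\tfrac{\sigma_2}{2}[((\partial_x h_2)_u)^2+((\partial_z h_2)_u+(\partial_w h_2)_u)^2]$. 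The principal obstacle is the $C^1$-dependence of the transported potential asserted above, which alone upgrades Gateaux to continuous Fréchet differentiability; its proof rests decisively on the fixed, regular reference geometry available for $u\in S$ and on the uniform estimates of Theorem~\ref{Thm1}.
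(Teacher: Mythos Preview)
Your proposal is correct and follows essentially the same approach as the paper: the transformation $\Theta_s$ is exactly the paper's $\Theta_v$ (Lemma~\ref{P316}) specialized to $v=u+s\vartheta$, the $C^1$-dependence of the transported potential is precisely what Lemma~\ref{P316} establishes via the implicit function theorem applied to the transported weak formulation, and the final rewriting on $\mathfrak{G}(u)$ through tangential differentiation of $\psi_{u,2}(x,u(x))=h_2(x,u(x),u(x))$ matches the closing computation in Proposition~\ref{C15}. The only cosmetic difference is that you package the volume term $\tfrac12\int\dot B\,\nabla\psi_u\cdot\nabla\psi_u$ as the divergence of the energy--momentum tensor tested against $\theta$, whereas the paper carries out the integration by parts term-by-term; both organizations are standard and equivalent.
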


\begin{proof}
This follows from Proposition~\ref{ACDC1} and Proposition~\ref{C15}.
\end{proof}

The proof of Theorem~\ref{Thm1bb} is performed along the lines of the proof of \cite[Theorem~5.3.2]{HP05} and relies on the following observation: for $u\in S$, there is a neighborhood $\mathcal{V}$ of $u$ in $S$ such that, for any $v\in\mathcal{V}$, there is a bi-Lipschitz transformation mapping $\Omega(v)$ onto $\Omega(u)$. Such a transformation then allows us to convert $\mathfrak{J}(v)$ for each $v\in\mathcal{V}$ to an integral over $\Omega(u)$ and investigate the behavior of the difference $\mathfrak{J}(v)-\mathfrak{J}(u)$ as $v\to u$.

\bigskip

The just outlined approach obviously fails for $u\in \bar S\setminus S$, since the coincidence set $\mathcal{C}(u)$ is non-empty. Indeed, in that case, it does not seem to be possible to find a bi-Lipschitz transformation mapping $\Omega(v)$ onto $\Omega(u)$, unless their coincidence sets are equal, $\mathcal{C}(v)=\mathcal{C}(u)$, an assumption which is far too restrictive. We instead use an approximation argument and show that the Dirichlet energy $\mathfrak{J}$ admits directional derivatives in the directions $-u+S$, as stated in the next result.

\begin{theorem}\label{Thm1b}
Let the function $h$ satisfy \eqref{bobbybrown} below and consider $u\in\bar S$. Introducing
\begin{equation*}
\mathfrak{g}(u)(x):=\left\{\begin{array}{ll}  \displaystyle{\frac{\sigma_2}{2} \big(1+(\partial_x u(x))^2\big)\,\big[\partial_z\psi_{u,2}-(\partial_z h_2)_u-(\partial_w h_2)_u\big]^2(x, u(x))} \,, & x\in D\setminus \mathcal{C}(u)\,,\\
\hphantom{x}\vspace{-3.5mm}\\
\displaystyle{\frac{\sigma_2}{2}\, \left[ \frac{\sigma_1}{\sigma_2}\partial_z\psi_{u,1}-(\partial_z h_2)_u-(\partial_w h_2)_u \right]^2(x, -H)}\,,  & x\in  \mathcal{C}(u)\,,
\end{array}\right.
\end{equation*}
then, for $w\in S$,
\begin{equation*}
\begin{split}
\lim_{t\rightarrow 0^+} \frac{1}{t}\big(\mathfrak{J}(&u+t(w-u))-\mathfrak{J}(u)\big)\\
= &-\int_D \mathfrak{g}(u)(x) (w-u)(x)\, \rd x\\
& +\frac{\sigma_2}{2} \int_D \left[ \big((\partial_x h_2)_u\big)^2+ \big((\partial_z h_2)_u+(\partial_w h_2)_u\big)^2 \right](x, u(x))\,(w-u)(x)\,\rd x\\
& -\int_D \left[ \sigma_1(\partial_w h_1)_u\,\partial_z\psi_{u,1} \right](x,-H-d)\,(w-u)(x)\,\rd x\,,
\end{split}
\end{equation*}
the notation being the same as in Theorem~\ref{Thm1bb}. Moreover, the function $\mathfrak{g}: \bar S\rightarrow L_p(D)$ is continuous for each $p\in [1,\infty)$, the set $\bar{S}$ being endowed with the topology of $H^2(D)$.
\end{theorem}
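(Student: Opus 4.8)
The plan is to establish Theorem~\ref{Thm1b} by an approximation argument that leverages the already-proven shape differentiability for states in $S$ (Theorem~\ref{Thm1bb}) together with the continuity of $u\mapsto\psi_u-h_u$ from $\bar S$ into $H^1(\R^2)$ and the uniform estimates of Theorem~\ref{Thm1}~(b). For fixed $u\in\bar S$ and $w\in S$, the segment $u_t:=u+t(w-u)$ lies in $S$ for every $t\in(0,1]$, since $u\ge -H$, $w>-H$, and convexity give $u_t>-H$ in $D$; moreover $u_t\to u$ in $H^2(D)$ as $t\to0^+$. The idea is to write the difference quotient as an integral of $\partial_u\mathfrak{J}(u_s)[w-u]$ over $s$ using the fundamental theorem of calculus along the segment, or equivalently to pass to the limit directly in the difference quotient after establishing that the right-hand side of Theorem~\ref{Thm1bb}, viewed as a linear functional in the perturbation direction, converges as $u_t\to u$ to the claimed limit. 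Concretely, I would first show
\begin{equation*}
\lim_{t\to0^+}\frac{1}{t}\big(\mathfrak{J}(u_t)-\mathfrak{J}(u)\big)=\lim_{t\to0^+}\partial_u\mathfrak{J}(u_t)[w-u]
\end{equation*}
and then identify the right-hand limit with the stated expression.

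The first half reduces, via Theorem~\ref{Thm1bb} applied at each $u_t\in S$, to proving the convergence of each of the three boundary integrals appearing in $\partial_u\mathfrak{J}(u_t)[w-u]$. The terms involving only $h$ and its derivatives $(\partial_x h_2)_{u_t}$, $(\partial_z h_2)_{u_t}$, $(\partial_w h_2)_{u_t}$ and $(\partial_w h_1)_{u_t}$ pose no serious difficulty: the smoothness hypotheses on $h$ (via \eqref{bobbybrown}) combined with $u_t\to u$ in $H^2(D)\hookrightarrow C^1(\bar D)$ give convergence of these coefficient functions in a strong enough topology, and the bottom-boundary term $[\sigma_1(\partial_w h_1)_{u_t}\partial_z\psi_{u_t,1}](x,-H-d)$ converges through the trace of $\partial_z\psi_{u_t,1}$ on $\Omega_1$, which is the \emph{regular} rectangular piece where the uniform $H^2(\Omega_1)$-bound of Theorem~\ref{Thm1}~(b) and the continuity of $u\mapsto\psi_u$ yield convergence of the trace in $L_2(D)$. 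The genuinely delicate term is $\mathfrak{g}(u_t)$, which is concentrated on the \emph{moving} upper graph $\mathfrak{G}(u_t)$ and involves the trace of $\partial_z\psi_{u_t,2}$ on a domain $\Omega_2(u_t)$ whose geometry degenerates precisely on the coincidence set $\mathcal{C}(u)$ as $t\to0^+$.

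The main obstacle, and the heart of the proof, is therefore the analysis of $\mathfrak{g}(u_t)(x)$ as $x$ ranges over $\mathcal{C}(u)$ and $t\to0^+$: here the point $(x,u_t(x))$ descends to the interface $(x,-H)$, the normal component of the gradient on $\mathfrak{G}(u_t)$ must be reconciled with the transmission condition $\llbracket\sigma\partial_z\psi_u\rrbracket=0$ on $\Sigma(u)$, and one must explain how the upper-domain normal derivative $\partial_z\psi_{u_t,2}$ transmutes into the lower-domain quantity $(\sigma_1/\sigma_2)\partial_z\psi_{u,1}$ appearing in the second branch of $\mathfrak{g}(u)$. The plan is to exploit that, as $t\to0^+$, the thin layer of $\Omega_2(u_t)$ above a point of $\mathcal{C}(u)$ collapses, so that in the limit the potential becomes continuous across $\Sigma$ and the flux $\sigma_2\partial_z\psi_{u_t,2}$ converges to $\sigma_1\partial_z\psi_{u,1}$ by the transmission condition together with the $H^1(\Omega(u))$-regularity of $\sigma\partial_z\psi_u$ asserted in Theorem~\ref{Thm1}~(a); this is exactly what replaces $\partial_z\psi_{u,2}(x,u(x))$ by $(\sigma_1/\sigma_2)\partial_z\psi_{u,1}(x,-H)$ in the coincidence set. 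I would control the relevant traces using the regularity $\sigma\partial_z\psi_u\in H^1(\Omega(u))$ on the Lipschitz domain $\Omega(u)$ as a whole, which, as noted in Remark~\ref{ECwasHere}, is what gives meaning to the one-sided traces even at the cuspidal points, and I would combine this with a change of variables flattening each $\mathfrak{G}(u_t)$ to $\Sigma$ to obtain the convergence in $L_2(D)$ of the integrands, uniformly enough to pass to the limit under the integral sign.

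Finally, for the continuity assertion that $\mathfrak{g}:\bar S\to L_p(D)$ is continuous for every $p\in[1,\infty)$ when $\bar S$ carries the $H^2(D)$-topology, the plan is to combine the continuity of $u\mapsto\psi_u-h_u$ in $H^1(\R^2)$, the uniform $H^2$-bounds of Theorem~\ref{Thm1}~(b), and the continuity of the traces of $\partial_z\psi_{u,1}$ and $\partial_z\psi_{u,2}$ on the respective boundaries. The passage from the two branches defining $\mathfrak{g}(u)$ must be shown to patch continuously across the transition $u\in S$ to $u\in\bar S\setminus S$: for a sequence $u_n\to u$ in $H^2(D)$ with $u\in\bar S\setminus S$, points $x$ with $u_n(x)>-H$ but $u(x)=-H$ must have their first-branch value $\frac{\sigma_2}{2}(1+(\partial_x u_n)^2)[\partial_z\psi_{u_n,2}-\cdots]^2(x,u_n(x))$ converge to the second-branch value, which again hinges on the transmission-condition argument above. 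Since $\mathfrak{g}(u_n)$ is bounded in $L_\infty(D)$ uniformly (by Theorem~\ref{Thm1}~(b) and the Sobolev embedding $H^2(D)\hookrightarrow C^1(\bar D)$), pointwise-a.e.\ convergence plus dominated convergence upgrades the convergence to $L_p(D)$ for all finite $p$, which is the required continuity.
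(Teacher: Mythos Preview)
Your overall strategy matches the paper's: the paper indeed writes $u_s:=u+s(w-u)\in S$ for $s\in(0,1)$, applies Theorem~\ref{Thm1bb} at each $u_s$ to get $\frac{\rd}{\rd s}\mathfrak{J}(u_s)$, uses the continuity $\mathfrak{J}(u_s)\to\mathfrak{J}(u)$ (Corollary~\ref{C3}) to write $\mathfrak{J}(u_t)-\mathfrak{J}(u)=\int_0^t\frac{\rd}{\rd s}\mathfrak{J}(u_s)\,\rd s$, and passes to the limit. The identification of the delicate term (the trace of $\partial_z\psi_{u_t,2}$ on the graph near $\mathcal{C}(u)$, where the transmission condition produces the factor $\sigma_1/\sigma_2$) is correct, and your heuristic for why the second branch of $\mathfrak{g}(u)$ arises is exactly the one the paper makes precise.

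There is, however, a genuine gap in your continuity argument for $\mathfrak{g}$. You assert that $\mathfrak{g}(u_n)$ is bounded in $L_\infty(D)$ uniformly ``by Theorem~\ref{Thm1}~(b) and the Sobolev embedding $H^2(D)\hookrightarrow C^1(\bar D)$'', and then invoke pointwise-a.e.\ convergence and dominated convergence. But Theorem~\ref{Thm1}~(b) gives $\psi_{u_n,2}\in H^2(\Omega_2(u_n))$, and in two dimensions $H^2$ does \emph{not} embed into $W^{1,\infty}$; hence $\partial_z\psi_{u_n,2}(\cdot,u_n)$ is not controlled in $L_\infty(D)$, and no such bound is available (the paper only obtains $L_p$ for finite $p$, see Lemma~\ref{Eric}). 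Moreover, even granting an $L_p$ bound, the pointwise-a.e.\ convergence you invoke is itself the hard part: the paper devotes Proposition~\ref{ACDC1} to proving directly that $\ell(v_n)\to\ell(v)$ in $L_p(D)$ (where $\ell(v)(x)$ is $\partial_z\psi_{v,2}(x,v(x))$ on $D\setminus\mathcal{C}(v)$ and $(\sigma_1/\sigma_2)\partial_z\psi_{v,1}(x,-H)$ on $\mathcal{C}(v)$) by splitting $D$ into $\Lambda(\epsilon)=\{v>-H+\epsilon\}$ and its complement, using the fundamental theorem of calculus in $z$ together with the transmission condition to write $\partial_z\psi_{v_n,2}(x,v_n(x))=(\sigma_1/\sigma_2)\partial_z\psi_{v_n,1}(x,-H)+\int_{-H}^{v_n}\partial_z^2\psi_{v_n,2}\,\rd z$ on the thin part, and handling the possibly countably many connected components of $\{v>-H\}$ via a careful $\epsilon$--$\delta$ argument. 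Your sketch for the convergence on $\mathcal{C}(u)$ points in this direction but does not substitute for it; you should replace the $L_\infty$/dominated-convergence step by these $L_p$ estimates and the decomposition argument.
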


\begin{proof}
This follows from Proposition~\ref{ACDC1} and Corollary~\ref{C17}.
\end{proof}

Observe that, for $u\in S$, the formula for $\mathfrak{g}(u)$ in Theorem~\ref{Thm1b} matches that of $\mathfrak{g}(u)$ in Theorem~\ref{Thm1bb}, since $\mathcal{C}(u)$ is empty in that case. The proof of Theorem~\ref{Thm1b} relies on Theorem~\ref{Thm1bb}, using the fact that $u+t(w-u)\in S$ for $t\in (0,1)$ when $u\in\bar S$ and $w\in S$. The main step is actually the computation of $\mathfrak{g}(u)$ for $u\in\bar S\setminus S$. To this end, we consider a bounded sequence $(u_n)_{n\ge 1}$ in $S$ converging to $u$ in $H_0^1(D)$ and identify the limit of $\mathfrak{g}(u_n)$ as $n\to\infty$. Of importance here are the uniform  $H^2$-estimates on $(\psi_{u_n})_{n\ge 1}$ proved in Theorem~\ref{Thm1}~\textbf{(b)}.

\bigskip

We end the introduction with a description of the contents of the subsequent sections. 

In Section~\ref{NaC} we provide the precise assumptions on the function $h$ defining the boundary conditions \eqref{TMP3} of the potential $\psi_u$, see \eqref{bobbybrown} and \eqref{bb}. 

The derivation of the $H^2$-estimates stated in Theorem~\ref{Thm1}~\textbf{(b)} is next performed in Section~\ref{Potential}. We begin Section~\ref{Potential} by recalling the well-posedness of the variational formulation associated with the transmission problem \eqref{TMP} and $H^2$-regularity properties of $\psi_u$ when $u\in S\cap W_\infty^2(D)$. For such $u$ we derive in Section~\ref{UEPP} quantitative estimates on $\psi_u$ in $H^2(\Omega_1)$ and $H^2(\Omega_2(u))$. To this end, we further develop the approach from \cite{LW16} and heavily use the property that $\Omega_2(u)$ can be mapped in a bi-Lipschitz way onto the rectangle $D\times (0,1)$ when $u\in S$. To extend the validity of the $H^2$-estimates to all $u\in \bar S$, special attention is paid to the dependence of the various constants arising in the estimates derived for $u\in S\cap W_\infty^2(D)$, including that involved in Sobolev embeddings. We show in particular that the estimates depend, neither on the $W_\infty^2$-regularity of $u$, nor on the positivity of $u+H$. For the extension to $u\in \bar S$, we employ then an approximation argument, relying on the density of $S\cap W_\infty^2(D)$ in $\bar S$. Specifically, given $u \in \bar S$, we consider a sequence $(u_n)_{n\ge 1}$ in $S\cap W_\infty^2(D)$, which is bounded in $H^2(D)$ and converges to $u$ in $H^1(D)$. A $\Gamma$-convergence argument provided in Section~\ref{GCDE} then implies  that $(\psi_{u_n}-h_{u_n})_{n\ge 1}$ converges to $\psi_u - h_u$ in $H^1(\mathbb{R}^2)$. Combining the outcome of Sections~\ref{UEPP} and~\ref{GCDE} allows us to complete the proof of Theorem~\ref{Thm1} in Section~\ref{HEPP}.  Finally, in preparation of the proof of Theorem~\ref{Thm1b}, we identify in Section~\ref{LBTVD} the behavior of the vertical derivative $x\mapsto \partial_z \psi_{u_n,2}(x,u_n(x))$, $x\in D$, as $n\to\infty$ for a sequence $(u_n)_{n\ge 1}$ in $S$ converging to $u\in\bar S$ in the norm of $H^1(D)$. Since the coincidence set $\mathcal{C}(u)$ of $u$ may be non-empty and possibly includes countably many connected components, this step requires some care  for the analysis in $\mathcal{C}(u)$, while a different argument is needed in $D\setminus \mathcal{C}(u)$. 

In Section~\ref{SDDE}, we turn to the study of the differentiability of the Dirichlet energy $\mathfrak{J}(u)$, see  \eqref{DE}, with respect to $u\in \bar S$. In this regard, we first establish the  Fr\'echet differentiability of $\mathfrak{J}$ on $S$, the proof following closely \cite{HP05}. We thus obtain the Fr\'echet derivative $\partial_u\mathfrak{J}(u)$ for $u\in S$ in the form given in Theorem~\ref{Thm1bb}. We then consider $u\in \bar S \setminus S$ and combine the outcome of Theorem~\ref{Thm1bb} and Section~\ref{LBTVD} to prove Theorem~\ref{Thm1b}.

Finally, Section~\ref{LESSMM} is devoted to an application of Theorems~\ref{Thm1bb} and~\ref{Thm1b} to identify the Euler-Lagrange equation satisfied by the minimizers of a functional arising in the modeling of microelectromechanical systems.

\section{Notations and Conventions}\label{NaC}

Given a subset $R$ of $\R^n$ with Lipschitz boundary, we let $H_{0}^1(R)$ denote the space of functions in $H^1(R)$ vanishing on the boundary $\partial R$ (in the sense of traces) and denote its dual space by $H^{-1}(R)$.

Recall that
$$
S=\{v\in H^2(D)\cap H_0^1(D)\,:\, v>-H \text{ in } D\}\,,
$$
so that its $H^2$-closure is $\bar S$ introduced above.
Given $v\in \bar S$ and a pair of real-valued functions $(\vartheta_1,\vartheta_2)$ with $\vartheta_1$ defined on $\Omega_1$ and  $\vartheta_2$ defined on $\Omega_2(v)$, we put
$$
\vartheta:=\left\{\begin{array}{ll} \vartheta_1 & \text{in}\ \Omega_1\,,\\
\vartheta_2 & \text{in}\ \Omega_2(v)\,,
\end{array}\right.
$$
and let
$$ 
\llbracket \vartheta \rrbracket (x,-H):=\vartheta_1(x,-H)-\vartheta_2(x,-H)\,,\qquad x\in D\setminus\mathcal{C}(v)\,,
$$ 
denote the jump across the interface $\Sigma(v)$ (if meaningful). Recall that the coincidence set $\mathcal{C}(v)$ is defined in \eqref{CS}. In particular, we set
$$
\sigma:=\left\{\begin{array}{ll} \sigma_1 & \text{in}\ \Omega_1\,,\\
\sigma_2 & \text{in}\ \Omega_2(v)\,.
\end{array}\right.
$$
Conversely, if $\vartheta$ is defined in $\Omega(v)$, then we denote the corresponding restrictions by $\vartheta_1 := \vartheta\vert_{\Omega_1}$ and $\vartheta_2 := \vartheta\vert_{\Omega_2(v)}$.

For further use we set 
\begin{equation}\label{sigmamin}
\sigma_{min}:=\min\left\{ \sigma_2,\min_{\overline\Omega_1}\sigma_1 \right\} >0\,, \qquad \sigma_{max}:=\max\left\{ \sigma_2, \|\sigma_1\|_{C^2(\bar\Omega_1)} \right\}<\infty\,.
\end{equation}

As described in the introduction, for $v\in\bar{S}$, the values of the potential $\psi_v$ on the boundary $\partial\Omega(v)$ are given by a function $h_v$. For technical reasons we assume that $h_v$ is not only defined on $\partial\Omega(v)$ but also has an extension to $\overline{\Omega(v)}$. More precisely, we fix $C^2$-functions
\begin{subequations}\label{bobbybrown}
\begin{equation}\label{bobbybrown2a}
h_1: \bar{D}\times [-H-d,-H]\times [-H,\infty)\rightarrow [0,\infty)
\end{equation}
and 
\begin{equation}\label{bobbybrown2aa}
h_2: \bar{D}\times [-H,\infty)\times [-H,\infty)\rightarrow [0,\infty)
\end{equation}
satisfying
\begin{align}
h_1(x,-H,w)&=h_2(x,-H,w)\,,\quad (x,w)\in D\times [-H,\infty)\,,\label{bobbybrown2}\\
 \sigma_1(x,-H)\partial_z h_1(x,-H,w)& =\sigma_2\partial_z h_2(x,-H,w)\,,\quad (x,w)\in D\times [-H,\infty)\,.\label{bobbybrown3}
\end{align}
\end{subequations}
For a given function $v\in \bar S$ we then define 
\begin{equation}\label{bb}
h_v(x,z):=\left\{\begin{array}{ll} h_{v,1}(x,z):= h_1(x,z,v(x))\,, & (x,z)\in \overline\Omega_1\,,\\
h_{v,2}(x,z):=h_2(x,z,v(x))\,, & (x,z)\in D\times [-H,\infty)\,.
\end{array}\right.  
\end{equation}
Note that \eqref{bobbybrown2}-\eqref{bobbybrown3} imply
\begin{align}\label{bobbybrown4}
\llbracket h_v \rrbracket &= \llbracket \sigma \partial_z h_v \rrbracket =0\quad\text{on }\ \Sigma(v)\,.
\end{align}
Consequently, by \eqref{bobbybrown4},
\begin{equation}
h_v\in H^1(\Omega(v)) \;\;\text{ with }\;\; (h_{v,1},h_{v,2})\in H^2(\Omega_1)\times H^2(\Omega_2(v))\,. \label{pif}
\end{equation} 

\section{The Potential}\label{Potential}

\newcounter{NumS3Const}

Given $v\in \bar S$ we recall the set of admissible potentials
$$
\mathcal{A}(v)=h_{v}+H_{0}^1(\Omega(v))
$$
and define the functional
\begin{equation}\label{sos}
\mathcal{J}(v)[\vartheta]:=\frac{1}{2}\int_{ \Omega(v)} \sigma \vert\nabla \vartheta\vert^2\,\rd (x,z)\,,\quad \vartheta\in \mathcal{A}(v)\,.
\end{equation}
The potential $\psi_v$ corresponding to $v\in \bar S$ and solving the transmission problem \eqref{TMP} is then the minimizer of the functional $\mathcal{J}(v)$ on the set $\mathcal{A}(v)$; that is,
\begin{equation*}
\mathcal{J}(v)[\psi_v] = \min_{\vartheta\in \mathcal{A}(v)} \{\mathcal{J}(v)[\vartheta]\} \,.
\end{equation*}

We first prove Theorem~\ref{Thm1} for $v\in S\cap W_\infty^2(D)$; that is, for smooth $v$ with empty coincidence set $\mathcal{C}(v)$.

\begin{proposition}\label{P1}
{\bf (a)} For each $v\in \bar S$ there is a unique minimizer $\psi_v \in \mathcal{A}(v)$ of $\mathcal{J}(v)$ on~$\mathcal{A}(v)$. 

\noindent{\bf (b)} If $v\in S \cap W_\infty^2(D)$, then $\psi_v=(\psi_{v,1},\psi_{v,2})\in H^2(\Omega_1)\times H^2(\Omega_2(v))$  satisfies the transmission problem
\begin{subequations}\label{a1}
\begin{align}
\mathrm{div}(\sigma\nabla\psi_v)&=0 \quad\text{in }\ \Omega(v)\,,\label{a1a}\\
\llbracket \psi_v \rrbracket = \llbracket \sigma \partial_z \psi_v \rrbracket &=0\quad\text{on }\ \Sigma\,,\label{a1b}\\
\psi_v&=h_v\quad\text{on }\ \partial\Omega(v)\,.\label{a1c}
\end{align}
\noindent{\bf (c)} If $v\in S \cap W_\infty^2(D)$, then $\sigma\partial_z\psi_v\in H^1(\Omega(v))$.
\end{subequations}
\end{proposition}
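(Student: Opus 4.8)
The plan is to handle the three assertions in turn: the first by a standard variational argument valid for every $v\in\bar S$, the last two by combining the known transmission regularity in the smooth configuration $v\in S\cap W_\infty^2(D)$ with an integration by parts. For part \textbf{(a)} I would fix $v\in\bar S$ and exploit the fact that, although $\Omega_2(v)$ may fail to be a Lipschitz domain when $\mathcal{C}(v)\neq\emptyset$, the full region $\Omega(v)$ lies below the graph of the $C^1$ function $v$ (using $H^2(D)\hookrightarrow C^1(\bar D)$) and is therefore always a bounded Lipschitz domain, so that $H_0^1(\Omega(v))$ and the Poincar\'e inequality are at our disposal. Writing $\vartheta=h_v+\phi$ with $\phi\in H_0^1(\Omega(v))$, which is legitimate since $h_v\in H^1(\Omega(v))$ by \eqref{pif}, minimizing $\mathcal{J}(v)$ over $\mathcal{A}(v)$ reduces to minimizing the quadratic functional $\phi\mapsto\frac12\int_{\Omega(v)}\sigma|\nabla h_v+\nabla\phi|^2\,\rd(x,z)$ over $H_0^1(\Omega(v))$. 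The associated bilinear form $(\phi,\chi)\mapsto\int_{\Omega(v)}\sigma\nabla\phi\cdot\nabla\chi\,\rd(x,z)$ is continuous because $\sigma\le\sigma_{max}$ and coercive because $\sigma\ge\sigma_{min}>0$ together with Poincar\'e; existence and uniqueness of the minimizer $\psi_v$ then follow from the Lax--Milgram theorem (equivalently, from the direct method, the functional being coercive, strictly convex, and weakly lower semicontinuous).

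For part \textbf{(b)} I would first record the Euler--Lagrange identity characterizing the minimizer, namely $\int_{\Omega(v)}\sigma\nabla\psi_v\cdot\nabla\phi\,\rd(x,z)=0$ for every $\phi\in H_0^1(\Omega(v))$. When $v\in S\cap W_\infty^2(D)$ the coincidence set is empty, so $\Sigma(v)=\Sigma=D\times\{-H\}$ is a flat segment, $\Omega_1$ is a rectangle, and $\Omega_2(v)$ is bounded away from $\Sigma$ by a $W_\infty^2$ graph; in this smooth configuration the piecewise regularity $\psi_{v,1}\in H^2(\Omega_1)$ and $\psi_{v,2}\in H^2(\Omega_2(v))$ is exactly the known transmission regularity, which I would invoke from \cite{Lem77}. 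With this at hand I would recover the strong form by testing the Euler--Lagrange identity first with $\phi$ compactly supported in $\Omega_1$ and in $\Omega_2(v)$, giving $\mathrm{div}(\sigma\nabla\psi_v)=0$ in each subregion, and then with $\phi$ having arbitrary trace on $\Sigma$; integrating by parts on each subdomain and using that the outer normals of $\Omega_1$ and $\Omega_2(v)$ along $\Sigma$ are opposite produces the flux condition $\llbracket\sigma\partial_z\psi_v\rrbracket=0$ on $\Sigma$. The continuity $\llbracket\psi_v\rrbracket=0$ is automatic, since $\psi_v\in H^1(\Omega(v))$ has a single-valued trace across the interior interface, while the boundary condition $\psi_v=h_v$ on $\partial\Omega(v)$ is encoded in $\mathcal{A}(v)$.

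For part \textbf{(c)} I would deduce the global regularity of $\sigma\partial_z\psi_v$ by a gluing argument. From \textbf{(b)}, $\psi_{v,1}\in H^2(\Omega_1)$ together with $\sigma_1\in C^2(\overline\Omega_1)$ gives $\sigma_1\partial_z\psi_{v,1}\in H^1(\Omega_1)$, and $\sigma_2\partial_z\psi_{v,2}\in H^1(\Omega_2(v))$ since $\sigma_2$ is constant, so $\sigma\partial_z\psi_v$ is piecewise $H^1$. Its one-sided traces along the flat Lipschitz interface $\Sigma$ coincide precisely by the flux transmission condition \eqref{a1b} established in \textbf{(b)}, whence the standard gluing lemma, asserting that a piecewise $H^1$ function with matching one-sided traces across an interface belongs to $H^1$ of the union, yields $\sigma\partial_z\psi_v\in H^1(\Omega(v))$. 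I expect the genuinely delicate point of the whole proposition to be the $H^2$-estimate in \textbf{(b)}: the corners where the graph of $v$ meets the lateral walls at $(\pm L,0)$, together with the jump of $\sigma$ across $\Sigma$, are what obstruct a direct regularity argument, and it is for precisely this reason that the statement is confined to the smooth class $v\in S\cap W_\infty^2(D)$ and relies on \cite{Lem77}.
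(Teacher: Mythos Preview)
Your proposal is correct and follows essentially the same route as the paper. The only structural difference worth noting is in part~\textbf{(b)}: you invoke \cite{Lem77} as a \emph{regularity} result for the variational solution $\psi_v$ and then integrate by parts in the Euler--Lagrange identity to recover the strong transmission conditions, whereas the paper invokes \cite{Lem77} as an \emph{existence} result for a strong $H^2\times H^2$ solution $\tilde\psi$ to \eqref{a1} and then identifies $\psi_v=\tilde\psi$ via uniqueness of the weak formulation. Both routes are standard and equivalent; parts~\textbf{(a)} and~\textbf{(c)} match the paper's argument essentially verbatim.
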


\begin{proof}
\noindent {\bf (a)}  Let $v\in \bar S$. The existence and uniqueness of a minimizer $\psi_v$ of $\mathcal{J}(v)$ on the set $\mathcal{A}(v)$ follow at once from the Lax-Milgram theorem, the positive lower bound \eqref{sigmamin} on $\sigma$, Poincar\'e's inequality, the convexity of $\mathcal{J}(v)$, and the property $\mathrm{div}(\sigma\nabla h_v)\in H^{-1}(\Omega(v))$ due to \eqref{pif}. 

\noindent {\bf (b)}  Next, the minimizing property of $\psi_v$ entails that
 $\mathcal{J}(v)[\psi_v]\le \mathcal{J}(v)[\psi_v+t\vartheta]$ for each $t\in \R$ and 
$\vartheta\in H_0^1(\Omega(v))$. By definition of $\mathcal{J}(v)$, this readily gives
\begin{equation}\label{e1}
\int_{\Omega(v)}\sigma\nabla\psi_v\cdot\nabla \vartheta\,\rd (x,z)=0\,,\quad \vartheta\in H_0^1(\Omega(v))\,.
\end{equation}
Now, if $v\in S \cap W_\infty^2(D)$, then \cite[Theorem III.4.6]{Lem77} ensures the existence of a unique solution $\tilde\psi=(\tilde\psi_1,\tilde\psi_2)\in H^2(\Omega_1)\times H^2(\Omega_2(v))$ to the transmission problem \eqref{a1}. Clearly, $\tilde \psi\in \mathcal{A}(v)$ satisfies \eqref{e1}, thus $\psi_v=\tilde\psi$.

\noindent {\bf (c)} It follows from the regularity of $\sigma_1$, Proposition~\ref{P1}~{\bf (b)}, and \eqref{a1b} that
$$
\sigma\partial_z\psi_v\in H^1(\Omega_1)\cup H^1(\Omega_2(v))
$$
with zero jump across the interface; that is, $\llbracket \sigma \partial_z \psi_v \rrbracket =0$ on $\Sigma$. This implies $\sigma\partial_z\psi_v\in H^1(\Omega(v))$.
\end{proof}

We shall later prove that Proposition~\ref{P1}~{\bf (b)} extends to all $v\in \bar S$. To this end, we need to give a meaning to the transmission condition \eqref{a1b} when $v\in\bar{S}$ and the boundary of $\Omega_2(v)$ is not Lipschitz, see Corollary~\ref{P1b}. We also note the following $H^1$-estimate for $\psi_v$.

\begin{lemma}\label{C1}
Given $v\in \bar S$,
$$
\int_{\Omega(v)}\sigma\vert\nabla\psi_v\vert^2\,\rd(x,z)\le \int_{\Omega(v)}\sigma\vert\nabla h_v\vert^2\,\rd(x,z)\,.
$$
\end{lemma}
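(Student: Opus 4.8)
The plan is to exploit directly the variational characterization of $\psi_v$ recorded in Proposition~\ref{P1}~\textbf{(a)}: for every $v\in\bar S$ the potential $\psi_v$ is the unique minimizer of the functional $\mathcal{J}(v)$ from \eqref{sos} over the affine set $\mathcal{A}(v)=h_v+H_0^1(\Omega(v))$. The entire statement should then follow by testing this minimality against a single, cheap competitor, namely $h_v$ itself. Indeed, by \eqref{pif} we have $h_v\in H^1(\Omega(v))$, so that $h_v=h_v+0\in\mathcal{A}(v)$, and the minimizing property gives $\mathcal{J}(v)[\psi_v]\le\mathcal{J}(v)[h_v]$. Unfolding the definition \eqref{sos} and multiplying by $2$ yields exactly the asserted inequality. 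This argument is valid for all $v\in\bar S$ without any extra regularity, since Proposition~\ref{P1}~\textbf{(a)} already covers the full admissible class.

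As a useful cross-check, I note that the same bound can be read off from the weak Euler--Lagrange identity \eqref{e1}, which holds for every $v\in\bar S$. Choosing the test function $\vartheta:=\psi_v-h_v\in H_0^1(\Omega(v))$ in \eqref{e1} gives $\int_{\Omega(v)}\sigma\,\nabla\psi_v\cdot\nabla h_v\,\rd(x,z)=\int_{\Omega(v)}\sigma\,\vert\nabla\psi_v\vert^2\,\rd(x,z)$. Applying the Cauchy--Schwarz inequality for the positive semi-definite symmetric bilinear form $(\vartheta,\chi)\mapsto\int_{\Omega(v)}\sigma\,\nabla\vartheta\cdot\nabla\chi\,\rd(x,z)$, which is nonnegative thanks to the lower bound $\sigma\ge\sigma_{min}>0$ from \eqref{sigmamin}, to the left-hand side one obtains $\int_{\Omega(v)}\sigma\vert\nabla\psi_v\vert^2\le\big(\int_{\Omega(v)}\sigma\vert\nabla\psi_v\vert^2\big)^{1/2}\big(\int_{\Omega(v)}\sigma\vert\nabla h_v\vert^2\big)^{1/2}$. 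If the left factor vanishes the claim is trivial; otherwise, dividing by $\big(\int_{\Omega(v)}\sigma\vert\nabla\psi_v\vert^2\big)^{1/2}$ and squaring again gives the desired bound.

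There is essentially no serious obstacle here: the estimate is an immediate consequence of minimality, and the only point that genuinely needs checking is that $h_v$ is a legitimate element of $\mathcal{A}(v)$, which is precisely the content of \eqref{pif}. I therefore expect the proof to be a two-line argument, with the comparison $\mathcal{J}(v)[\psi_v]\le\mathcal{J}(v)[h_v]$ as its core.
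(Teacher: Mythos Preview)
Your proposal is correct and follows exactly the paper's own argument: the paper's proof consists of the single observation that $h_v\in\mathcal{A}(v)$ together with the minimizing property of $\psi_v$ from Proposition~\ref{P1}~\textbf{(a)}. Your additional cross-check via \eqref{e1} and Cauchy--Schwarz is valid but superfluous here.
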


\begin{proof}
This follows from $h_v\in\mathcal{A}(v)$ and the minimizing property of $\psi_v$ stated in Proposition~\ref{P1}~{\bf (a)}.
\end{proof}

For our purpose we need, besides the extension of Proposition~\ref{P1}~{\bf (b)} to all $v\in \bar S$, precise information on the dependence of $\psi_v$ on $v$. Such information is, unfortunately, not included in the approach of \cite{Lem77}.

\subsection{Uniform Estimates on the  Potential $\psi_v$}\label{UEPP}

For $v\in S \cap W_\infty^2(D)$ we  denote the unique minimizer of $\mathcal{J}(v)$ on $\mathcal{A}(v)$  by $\psi_v \in \mathcal{A}(v)$, with 
$$
\psi_v=(\psi_{v,1},\psi_{v,2})\in H^2(\Omega_1)\times H^2(\Omega_2(v))\,,
$$ 
as provided by Proposition~\ref{P1}. In that case, the coincidence $\mathcal{C}(v)$ of $v$, defined in \eqref{CS}, is empty, so that $\Sigma(v)=\Sigma$, see Figure~\ref{Fig2}.  
 We next define
\begin{equation}\label{chi}
\chi=\chi_v:=\psi_v-h_v\in H_{0}^1(\Omega(v))\,,\quad (\chi_{1},\chi_{2}) := (\chi_{v,1},\chi_{v,2})\in H^2(\Omega_1)\times H^2(\Omega_2(v))\,,
\end{equation}
suppressing in the following the dependence of $\chi$ on the fixed $v$ for ease of notation. Recalling~\eqref{bobbybrown4}, we obtain from Proposition~\ref{P1} that $\chi$ satisfies the transmission problem
\begin{subequations}\label{a2}
\begin{align}
\mathrm{div}(\sigma\nabla\chi)&=-\mathrm{div}(\sigma\nabla h_v) \quad\text{in }\ \Omega(v)\,,\label{a2a}\\
\llbracket \chi \rrbracket = \llbracket \sigma \partial_z \chi \rrbracket &=0\quad\text{on }\ \Sigma\,,\label{a2b}\\
\chi&=0\quad\text{on }\ \partial\Omega(v)\,.
\end{align}
\end{subequations}

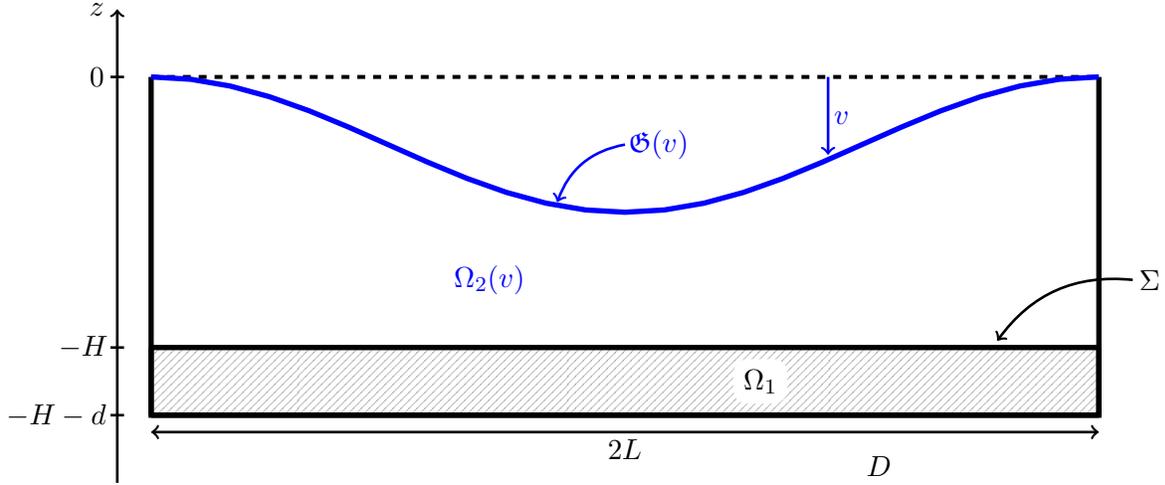
\begin{figure}
	\begin{tikzpicture}[scale=0.9]
	\draw[black, line width = 1.5pt, dashed] (-7,0)--(7,0);
	\draw[black, line width = 2pt] (-7,0)--(-7,-5);
	\draw[black, line width = 2pt] (7,-5)--(7,0);
	\draw[black, line width = 2pt] (-7,-5)--(7,-5);
	\draw[black, line width = 2pt] (-7,-4)--(7,-4);
	\draw[black, line width = 2pt, fill=gray, pattern = north east lines, fill opacity = 0.5] (-7,-4)--(-7,-5)--(7,-5)--(7,-4);
	\draw[blue, line width = 2pt] plot[domain=-7:7] (\x,{-1-cos((pi*\x/7) r)});
	\draw[blue, line width = 1pt, arrows=->] (3,0)--(3,-1.15);
	\node at (3.2,-0.6) {${\color{blue} v}$};
	\node[draw,rectangle,white,fill=white, rounded corners=5pt] at (2,-4.5) {$\Omega_1$};
	\node at (2,-4.5) {$\Omega_1$};
	\node at (-2,-3) {${\color{blue} \Omega_2(v)}$};
	\node at (3.75,-5.75) {$D$};
	\node at (7.75,-3) {$\Sigma$};
	\draw (7.5,-3) edge[->,bend right, line width = 1pt] (5.5,-3.9);
	\node at (-7.8,1) {$z$};
	\draw[black, line width = 1pt, arrows = ->] (-7.5,-6)--(-7.5,1);
	\node at (-8.4,-5) {$-H-d$};
	\draw[black, line width = 1pt] (-7.6,-5)--(-7.4,-5);
	\node at (-8,-4) {$-H$};
	\draw[black, line width = 1pt] (-7.6,-4)--(-7.4,-4);
	\node at (-7.8,0) {$0$};
	\draw[black, line width = 1pt] (-7.6,0)--(-7.4,0);
	\node at (0,-5.5) {$2L$};
	\draw[black, line width = 1pt, arrows = <->] (-7,-5.25)--(7,-5.25);
	\node at (0.5,-1) {${\color{blue} \mathfrak{G}(v)}$};
	\draw (0,-1) edge[->,bend right,blue, line width = 1pt] (-1,-1.85);
	\end{tikzpicture}
	\caption{Geometry of $\Omega(v)$ for a state $v\in S$ with empty coincidence set.}\label{Fig2}
\end{figure}

Our aim is now to derive an estimate for $\chi=\chi_v=(\chi_{v,1},\chi_{v,2})$ in  the norm of $H^2(\Omega_1)\times H^2(\Omega_2(v))$, which only depends on $\|v\|_{H^2(D)}$ but, neither on the norm of $v$ in $W_\infty^2(D)$, nor on the value of $\min_D\{v+H\}$. This then allows us to extend Proposition~\ref{P1}~\textbf{(b)} to all $v\in \bar S$. 

Analogously to Proposition~\ref{P1}~\textbf{(c)}, an immediate consequence of \eqref{chi} and \eqref{a2b} is the $H^1$-regularity of $\sigma\partial_z\chi$.

\begin{lemma}\label{L-1} \refstepcounter{NumS3Const}\label{S3cst1}
Let $v\in S \cap W_\infty^2(D)$ and $\chi=\psi_v-h_v$. Then $\sigma\partial_z\chi\in H^1(\Omega(v))$ and $$\|\sigma\partial_z\chi\|_{H^1(\Omega(v))}\le c_{\ref{S3cst1}} \left( \|\partial_z\chi_1\|_{H^1(\Omega_1)} + \|\partial_z\chi_2\|_{H^1(\Omega_2(v))} \right)
$$
for some constant $c_{\ref{S3cst1}}>0$ independent of $v$.
\end{lemma}

To derive an $H^2$-estimate on $\chi$ (more precisely, on $\chi_{1}$  and on $\chi_{2}$) we  transform \eqref{a2} to a transmission problem on a rectangle. To keep a flat interface between the two subregions, we transform $\Omega_1$ to the rectangle $D\times (-d,0)$ and $\Omega_2(v)$ to the rectangle $D\times (0,1)$. More precisely, we introduce the transformation
\begin{equation}\label{t1}
T_1(x,z):=\left(x,H+z\right)\ ,\quad (x,z)\in {\Omega_1}\ ,
\end{equation}
mapping $\Omega_1$ onto the rectangle \mbox{$\mathcal{R}_1:=D\times (-d,0)$}, and the transformation
\begin{equation}\label{t2}
T_2(x,z):=\left(x,\frac{H+z}{H+v(x)}\right)\ ,\quad (x,z)\in {\Omega_2(v)}\ ,
\end{equation}
mapping $\Omega_2(v)$ onto the fixed rectangle \mbox{$\mathcal{R}_2:=D\times (0,1)$}. Then
$$
\Sigma_0:=D\times \{0\}
$$ 
is the interface separating $\mathcal{R}_1$ and $\mathcal{R}_2$. We set
$$
\mathcal{R}:=D\times (-d,1)=\mathcal{R}_1\cup \mathcal{R}_2\cup \Sigma_0
$$
and let $(x,\eta)$ denote the new variables in $\mathcal{R}$, i.e. $(x,\eta)=T_1(x,z)$ in $\mathcal{R}_1$ and $(x,\eta)=T_2(x,z)$ in $\mathcal{R}_2$.
We also introduce
$$
\hat\sigma:=\left\{\begin{array}{ll} \sigma_1\circ (T_1)^{-1} & \text{in}\ \mathcal{R}_1\,,\\
\hphantom{x}\vspace{-3.5mm}\\
\dfrac{\sigma_2}{H+v} & \text{in}\ \mathcal{R}_2\,.
\end{array}\right.
$$
Then, by \eqref{chi} and \eqref{a2b}, 
\begin{equation}\label{C0}
\Phi=(\Phi_1,\Phi_2)\in H_0^1(\mathcal{R})\,,\qquad
\Phi_j:= \chi_j\circ (T_j)^{-1}\in H^2(\mathcal{R}_j)\,,\quad j=1,2\,,
\end{equation} 
and
\begin{equation}\label{transPhi}
\llbracket \Phi\rrbracket =\llbracket \hat\sigma \partial_\eta\Phi\rrbracket=0\quad\text{on }\ \Sigma_0\,.
\end{equation}
We will make use of this regularity often in the following without mention. In particular, as $\Phi$ vanishes on $\partial \mathcal{R}$ (and is smooth enough),
\begin{equation}\label{C}
\partial_x\Phi(x,-d)=\partial_x\Phi(x,1)=\partial_\eta \Phi (\pm L,\eta)=0\,,\qquad x\in D\,,\quad \eta\in (-d,1)\,.
\end{equation}
We begin with an identity for $\Phi$, which is based on \cite[Lemma~4.3.1.2]{Gr85} and  fundamental for the forthcoming analysis.

\begin{lemma}\label{L1}
Given $v\in S \cap W_\infty^2(D)$ and with the above notation,
$$
\int_\mathcal{R}\partial_x^2\Phi\,\partial_\eta\left(\hat\sigma\partial_\eta\Phi\right)\,\rd (x,\eta)=\int_\mathcal{R}\partial_{x} \partial_{\eta}\Phi\,\partial_x\left(\hat\sigma\partial_\eta\Phi\right)\,\rd (x,\eta)\,.
$$
\end{lemma}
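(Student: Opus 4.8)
The plan is to read the asserted identity as a piecewise integration-by-parts formula in divergence form, applied separately on the two rectangles $\mathcal{R}_1$ and $\mathcal{R}_2$, and then to verify that all boundary contributions either vanish on $\partial\mathcal{R}$ or cancel across the interface $\Sigma_0$. Writing $w:=\hat\sigma\partial_\eta\Phi$, I first observe that $w\in H^1(\mathcal{R})$: on each rectangle $w_j=\hat\sigma\partial_\eta\Phi_j\in H^1(\mathcal{R}_j)$ because $\Phi_j\in H^2(\mathcal{R}_j)$ by \eqref{C0} and $\hat\sigma$ is Lipschitz on each piece, while the transmission condition $\llbracket\hat\sigma\partial_\eta\Phi\rrbracket=0$ in \eqref{transPhi} guarantees that $w$ has no jump across $\Sigma_0$. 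With this notation the claim reads $\int_\mathcal{R}\big(\partial_x^2\Phi\,\partial_\eta w-\partial_x\partial_\eta\Phi\,\partial_x w\big)\,\rd(x,\eta)=0$.

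The starting point is the elementary observation that, for smooth functions, $\partial_x^2\Phi\,\partial_\eta w-\partial_x\partial_\eta\Phi\,\partial_x w=\mathrm{div}\big(\partial_x\Phi\,\partial_\eta w,\,-\partial_x\Phi\,\partial_x w\big)$, the terms containing the mixed derivative $\partial_x\partial_\eta w$ cancelling. This is precisely the content of \cite[Lemma~4.3.1.2]{Gr85}, which renders the resulting identity
$$
\int_{\mathcal{R}_j}\big(\partial_x^2\Phi\,\partial_\eta w-\partial_x\partial_\eta\Phi\,\partial_x w\big)\,\rd(x,\eta)=\int_{\partial\mathcal{R}_j}\partial_x\Phi\,\partial_\tau w\,\rd s
$$
rigorous for $\Phi\in H^2(\mathcal{R}_j)$ and $w\in H^1(\mathcal{R}_j)$, the right-hand side being a duality pairing between $\partial_x\Phi\in H^{1/2}(\partial\mathcal{R}_j)$ and the tangential derivative $\partial_\tau w\in H^{-1/2}(\partial\mathcal{R}_j)$ of the trace of $w$. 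Adding the identities for $j=1,2$ reproduces the left-hand side of the lemma (since $\Sigma_0$ is a null set), so it remains to show that the sum of the two boundary integrals vanishes.

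I would then treat the three portions of the boundaries separately. On the horizontal outer edges $\eta\in\{-d,1\}$ the integrand vanishes because $\partial_x\Phi=0$ there by \eqref{C}. On the vertical outer edges $x=\pm L$ the tangent is the $\eta$-direction and $w=\hat\sigma\partial_\eta\Phi=0$ there, again by \eqref{C}, so that its tangential derivative $\partial_\tau w=\partial_\eta w$ along these edges vanishes identically. On the interface $\Sigma_0$ the two rectangles carry opposite outward normals, so that the contributions are $-\partial_x\Phi_1\,\partial_x w_1$ from $\mathcal{R}_1$ and $+\partial_x\Phi_2\,\partial_x w_2$ from $\mathcal{R}_2$; since $\llbracket\Phi\rrbracket=\llbracket w\rrbracket=0$ on $\Sigma_0$ by \eqref{transPhi}, the traces of $\Phi_1,\Phi_2$ and of $w_1,w_2$ coincide on $\Sigma_0$, hence so do their tangential ($x$-)derivatives, and the two contributions cancel. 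This yields the claimed identity.

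I expect the main obstacle to be the low regularity of $w$: as $w$ is only $H^1$, the derivatives $\partial_x w$ and $\partial_\eta w$ have no $L_2$-traces and the boundary integrals must be interpreted as $H^{1/2}$--$H^{-1/2}$ dualities. The care needed is twofold: justifying (via \cite[Lemma~4.3.1.2]{Gr85} together with a density argument) that the dangerous term $\int\partial_x\Phi\,\partial_x\partial_\eta w$ --- formally common to both sides but meaningless at this regularity --- never has to be evaluated, and checking that the cancellation of tangential derivatives on $\Sigma_0$ is legitimate at the level of $H^{-1/2}(\Sigma_0)$, which follows from the equality of the $H^{1/2}$-traces of $\Phi$ and of $w$ across $\Sigma_0$.
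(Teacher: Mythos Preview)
Your argument is correct and rests on the same ingredients as the paper --- Grisvard's identity \cite[Lemma~4.3.1.2]{Gr85}, the transmission conditions \eqref{transPhi}, and the boundary data \eqref{C} --- but you organise it piecewise on $\mathcal{R}_1$ and $\mathcal{R}_2$, whereas the paper works globally on $\mathcal{R}$. The paper's extra observation is that not only $G:=\hat\sigma\partial_\eta\Phi$ but also $F:=\partial_x\Phi$ lies in $H^1(\mathcal{R})$ on the whole rectangle (since $\llbracket\Phi\rrbracket=0$ on $\Sigma_0$ forces $\llbracket\partial_x\Phi\rrbracket=0$ there). With both $F,G\in H^1(\mathcal{R})$, a single application of \cite[Lemma~4.3.1.2, Lemma~4.3.1.3]{Gr85} on $\mathcal{R}$ yields $\int_\mathcal{R}\partial_x F\,\partial_\eta G=\int_\mathcal{R}\partial_\eta F\,\partial_x G$ directly, with boundary terms only on $\partial\mathcal{R}$; the interface $\Sigma_0$ never enters. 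This completely sidesteps the delicate point you flag as the main obstacle --- localising the $H^{1/2}$--$H^{-1/2}$ duality to $\Sigma_0$ and checking cancellation of the two interface contributions there. Your piecewise route is valid and arguably more explicit about where each hypothesis is used, but the global route is shorter and avoids those trace subtleties altogether.
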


\begin{proof}
We adapt the proof of \cite[Lemma~II.2.2]{Lem77}. Since $\llbracket \Phi\rrbracket=0$ on $\Sigma_0$ by \eqref{transPhi}, we get 
\begin{equation}\label{i}
 \llbracket \partial_x\Phi\rrbracket=0\quad\text{ on }\ \Sigma_0\,.
\end{equation}
Then \eqref{i} along with \eqref{C0} imply that 
$$
F:=\partial_x\Phi=(\partial_x\Phi_1,\partial_x\Phi_2)\in H^1(\mathcal{R})\,,
$$
while \eqref{transPhi} along with \eqref{C0} imply that
$$
G:=\hat\sigma\partial_\eta\Phi=(\hat\sigma_1\partial_\eta\Phi_1,\hat\sigma_2\partial_\eta\Phi_2)\in H^1(\mathcal{R})\,.
$$
Consequently, the regularity of $(F,G)$ together with \eqref{transPhi}, \eqref{C}, and \eqref{i} allow us to apply \cite[Lemma~4.3.1.2, Lemma~4.3.1.3]{Gr85} from which we deduce that
$$
\int_\mathcal{R}\partial_xF\partial_\eta G\,\rd (x,\eta)=\int_\mathcal{R}\partial_{\eta}F\partial_x G\,\rd (x,\eta)\,,
$$
as claimed.
\end{proof}

Based on the previous lemma we derive the following identity, which subsequently leads to the desired $H^2$-estimates on $\psi=\psi_v$.

\begin{lemma}\label{L2}
Let $v\in S \cap W_\infty^2(D)$. Then, for $\chi=\psi_v-h_v$,
\begin{equation*}
\begin{split}
\int_{\Omega_1 \cup \Omega_2(v)}&\mathrm{div}\left(\sigma\nabla\chi\right)\,\partial_z^2\chi\,\rd (x,z)\\
=&
\int_{\Omega_1 \cup \Omega_2(v)}\partial_z\left(\sigma\partial_x\chi\right)\,\partial_{x}\partial_{z}\chi\,\rd (x,z)+\int_{\Omega_1 \cup \Omega_2(v)}\partial_z\left(\sigma\partial_z\chi\right)\,\partial_{z}^2\chi\,\rd (x,z)\\
&+\int_{D}\big(\partial_x\sigma_1 \partial_x\chi_1\partial_z\chi_1\big)(x,-H)\,\rd x-\frac{\sigma_2}{2}\int_{D}\partial_x^2 v(x)\left(\partial_z\chi_2(x,v(x))\right)^2\,\rd x\,.
\end{split}
\end{equation*}
\end{lemma}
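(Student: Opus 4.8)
The plan is to peel off the term common to both sides, reduce the claim to a single ``derivative swap'' for the mixed second derivatives of $\chi$, and then read off the two boundary contributions from the geometry of $\Omega_2(v)$. First, since $\mathrm{div}(\sigma\nabla\chi)=\partial_x(\sigma\partial_x\chi)+\partial_z(\sigma\partial_z\chi)$, and since $\sigma\partial_z\chi\in H^1(\Omega(v))$ by Lemma~\ref{L-1} while $\sigma\partial_x\chi\in H^1(\Omega_1)\cup H^1(\Omega_2(v))$ because $\chi_j\in H^2$ and $\sigma_1\in C^2(\overline\Omega_1)$, each integral below is meaningful. The summand $\int_{\Omega_1\cup\Omega_2(v)}\partial_z(\sigma\partial_z\chi)\,\partial_z^2\chi\,\rd(x,z)$ occurs verbatim on both sides of the asserted identity, so it suffices to prove
\begin{equation*}
\int_{\Omega_1\cup\Omega_2(v)}\partial_x(\sigma\partial_x\chi)\,\partial_z^2\chi\,\rd(x,z)=\int_{\Omega_1\cup\Omega_2(v)}\partial_z(\sigma\partial_x\chi)\,\partial_x\partial_z\chi\,\rd(x,z)+I_\Sigma+I_{\mathfrak G}\,,
\end{equation*}
where $I_\Sigma:=\int_D(\partial_x\sigma_1\,\partial_x\chi_1\,\partial_z\chi_1)(x,-H)\,\rd x$ and $I_{\mathfrak G}:=-\tfrac{\sigma_2}{2}\int_D\partial_x^2 v(x)\,\big(\partial_z\chi_2(x,v(x))\big)^2\,\rd x$.

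To establish this reduced identity I would integrate by parts in $x$ and in $z$ on each of $\Omega_1$ and $\Omega_2(v)$. Writing $a:=\sigma\partial_x\chi$, the two bulk integrals differ, after such a formal manipulation, by the single boundary integral $\int_{\partial(\Omega_1\cup\Omega_2(v))}\big[\partial_x a\,n_z-\partial_z a\,n_x\big]\,\partial_z\chi\,\rd s$, the interior contributions cancelling since $\partial_x\partial_z a=\partial_z\partial_x a$. This interchange, as well as the very meaning of the traces of $\partial a$, would formally require $\chi\in H^3$, which is \emph{not} available. This is precisely the obstruction removed by Lemma~\ref{L1}: transporting the computation to the rectangle $\mathcal R$ via the maps $T_1,T_2$ of \eqref{t1}--\eqref{t2} and setting $\Phi=\chi\circ T^{-1}$, the functions $\partial_x\Phi$ and $\hat\sigma\partial_\eta\Phi$ belong to $H^1(\mathcal R)$ and satisfy the matching and boundary conditions \eqref{transPhi} and \eqref{C}, so Lemma~\ref{L1} legitimizes the exchange at the available regularity. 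The flat map $T_1$ produces no correction on $\Omega_1$, whereas the $x$-dependence of the flattening $T_2$, entering through the chain-rule factors $H+v$, $\partial_x v$ and $\partial_x^2 v$, is what ultimately generates the curvature contribution on the graph $\mathfrak G(v)$.

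It then remains to evaluate the boundary integral. Since $\chi\in H_0^1(\Omega(v))$, its tangential derivative vanishes on $\partial\Omega(v)$; hence the lateral sides $\{\pm L\}\times(-H-d,1)$ and the bottom $D\times\{-H-d\}$ contribute nothing. On the interface $\Sigma$ one has $n_x=0$ and the inner normals of $\Omega_1$ and $\Omega_2(v)$ are opposite, so the two one-sided traces combine into $\int_D\big[\partial_x(\sigma_1\partial_x\chi_1)\,\partial_z\chi_1-\partial_x(\sigma_2\partial_x\chi_2)\,\partial_z\chi_2\big](x,-H)\,\rd x$; invoking $\partial_x^2\chi_1=\partial_x^2\chi_2$ on $\Sigma$ (from $\llbracket\chi\rrbracket=0$) together with the transmission condition $\sigma_1\partial_z\chi_1=\sigma_2\partial_z\chi_2$ from \eqref{a2b}, the second-derivative terms cancel and only $I_\Sigma$ survives, the coefficient $\partial_x\sigma_1$ reflecting the $x$-dependence of $\sigma_1$ (recall $\sigma_2$ is constant). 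On the top graph I would parametrize by $x\in D$, so that $n_z\,\rd s=\rd x$ and $n_x\,\rd s=-\partial_x v\,\rd x$; the integrand becomes $\sigma_2\big[\partial_x^2\chi_2+\partial_x v\,\partial_x\partial_z\chi_2\big]\partial_z\chi_2$ evaluated at $(x,v(x))$, that is, exactly $\sigma_2\,\tfrac{\rd}{\rd x}\big[\partial_x\chi_2(x,v(x))\big]\,\partial_z\chi_2(x,v(x))$. Differentiating the boundary relation $\chi_2(x,v(x))=0$ once gives $\partial_x\chi_2(x,v(x))=-\partial_x v\,\partial_z\chi_2(x,v(x))$, and substituting it followed by a single integration by parts in $x$ over $D$ — the endpoint terms vanishing because $\partial_z\chi_2(\pm L,0)=0$ — converts this contribution into the curvature term $I_{\mathfrak G}$.

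The only genuinely delicate point is the derivative exchange of the second step: the merely piecewise-$H^2$ regularity of $\chi$ forbids the naive interchange of $\partial_x$ and $\partial_z$ (and the direct use of the traces of $\partial a$), and it is the transport to the flat geometry of $\mathcal R$, where Lemma~\ref{L1} (itself resting on \cite[Lemma~4.3.1.2]{Gr85}) validates the exchange, that makes the argument rigorous. Everything else is careful bookkeeping: the cancellation of the tangential second derivative across $\Sigma$ through the transmission conditions, and the single integration by parts along $D$ that turns the trace on $\mathfrak G(v)$ into the curvature term involving $\partial_x^2 v$.
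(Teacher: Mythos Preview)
Your strategy matches the paper's exactly: cancel the $\partial_z(\sigma\partial_z\chi)\,\partial_z^2\chi$ term, then justify the swap $\int\partial_x(\sigma\partial_x\chi)\,\partial_z^2\chi\leftrightarrow\int\partial_z(\sigma\partial_x\chi)\,\partial_x\partial_z\chi$ by transporting to the rectangle $\mathcal R$ and invoking Lemma~\ref{L1}, with the interface and graph contributions emerging as the only surviving boundary terms. Your heuristic boundary calculations on $\Sigma$ and $\mathfrak G(v)$ are formally correct and lead to the right answers $I_\Sigma$ and $I_{\mathfrak G}$.

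The gap is one of execution rather than idea. You correctly flag that the boundary integral $\int_{\partial(\Omega_1\cup\Omega_2(v))}[\partial_x a\,n_z-\partial_z a\,n_x]\,\partial_z\chi\,\rd s$ is only formal at the available $H^2$-regularity (the traces $\partial_x^2\chi_j|_\Sigma$, $\partial_x^2\chi_2|_{\mathfrak G(v)}$ are not in $L_2$ a priori), and you say Lemma~\ref{L1} fixes this --- but then you evaluate that very boundary integral in the original coordinates anyway. The point is that Lemma~\ref{L1} is an identity on $\mathcal R$, not in $\Omega(v)$, so it does not directly validate your formal boundary formula; rather, one must carry out the \emph{entire} computation in the rectangle and transform back only after all second-derivative traces have been eliminated. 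This is what the paper does: it expresses $J=\int\partial_x(\sigma\partial_x\chi)\,\partial_z^2\chi$ in $(x,\eta)$-variables (producing, in $\mathcal R_2$, a five-term expansion from the chain rule for $T_2$), applies Lemma~\ref{L1} to the leading pieces, and then performs further integrations by parts \emph{in the rectangle} --- where the flat boundaries and the conditions \eqref{C} make everything rigorous --- before pulling back. The curvature term $I_{\mathfrak G}$ arises not from a boundary integral on $\mathfrak G(v)$ but from integrating $-\int_{\mathcal R_2}\hat\sigma_2\,\eta\,\frac{\partial_x^2 v}{H+v}\,\partial_\eta\Phi_2\,\partial_\eta^2\Phi_2\,\rd(x,\eta)$ by parts in $\eta$ and then in $x$; likewise $I_\Sigma$ drops out of the $\mathcal R_1$ piece after an IBP in $\eta$ and one in $x$. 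Your outline would become a proof once these rectangle computations are written out; as it stands, the last two paragraphs are a correct heuristic but not a rigorous argument.
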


\begin{proof}
Let us first emphasize that the regularity of $\Phi$ stated in \eqref{C0} ensures the validity of the subsequent computations. Using the transformations $T_1$ and $T_2$ introduced above (and the fact that $\sigma_2$ is constant) we get
\begin{equation*}
\begin{split}
J:=&\int_{\Omega_1 \cup \Omega_2(v)}\partial_x\left(\sigma\partial_x\chi\right)\,\partial_z^2\chi\,\rd (x,z)\\
&=
\int_{\mathcal{R}_1}\partial_x\left(\hat\sigma_1\partial_x\Phi_1\right)\,\partial_{\eta}^2\Phi_1\,\rd (x,\eta)\\
&\qquad+\int_{\mathcal{R}_2}\hat\sigma_2\partial_\eta^2\Phi_2\,\left[\partial_{x}^2\Phi_2 -2\eta\frac{\partial_x v}{H+v}\partial_{x}\partial_{\eta}\Phi_2 +\eta^2\left(\frac{\partial_x v}{H+v}\right)^2 \partial_\eta^2\Phi_2\right.\\
&\left. \qquad\qquad\qquad\qquad\qquad +2\eta\left(\frac{\partial_x v}{H+v}\right)^2 \partial_\eta\Phi_2 -\eta\frac{\partial_x^2 v}{H+v}\partial_\eta\Phi_2\right]\,\rd (x,\eta)\,.
\end{split}
\end{equation*}
We next combine the integral on $\mathcal{R}_1$ and the integral on $\mathcal{R}_2$ stemming from the first term in the square brackets to get
\begin{equation*}
\begin{split}
\int_{\mathcal{R}_1}&\partial_x\left(\hat\sigma_1\partial_x\Phi_1\right)\,\partial_{\eta}^2\Phi_1\,\rd (x,\eta)+\int_{\mathcal{R}_2}\hat\sigma_2\partial_\eta^2\Phi_2\,\partial_{x}^2\Phi_2\,\rd (x,\eta)\\
&= \int_{\mathcal{R}}\partial_x^2\Phi\,\partial_\eta\left(\hat\sigma\partial_\eta\Phi\right)\,\rd (x,\eta)
+\int_{\mathcal{R}_1}\left[\partial_x\hat\sigma_1\partial_x\Phi_1\partial_\eta^2\Phi_1-\partial_\eta\hat\sigma_1\partial_x^2\Phi_1\partial_\eta\Phi_1\right]\,\rd (x,\eta)\\
&= \int_{\mathcal{R}}\partial_{x}\partial_{\eta}\Phi\,\partial_x\left(\hat\sigma\partial_\eta\Phi\right)\,\rd (x,\eta)
+\int_{\mathcal{R}_1}\left[\partial_x\hat\sigma_1\partial_x\Phi_1\partial_\eta^2\Phi_1-\partial_\eta\hat\sigma_1\partial_x^2\Phi_1\partial_\eta\Phi_1\right]\,\rd (x,\eta)\,,
\end{split}
\end{equation*}
where we have used Lemma~\ref{L1} to obtain the second identity. Splitting again the integral on $\mathcal{R}$ into integrals on $\mathcal{R}_1$ and $\mathcal{R}_2$ and gathering the above computations give
\begin{equation}\label{7}
\begin{split}
J&=\int_{\mathcal{R}_1}\hat\sigma_1\left(\partial_{x}\partial_{\eta}\Phi_1\right)^2\,\rd (x,\eta)
+\int_{\mathcal{R}_1}\partial_x\hat\sigma_1 \partial_{\eta}\Phi_1\partial_{x}\partial_{\eta}\Phi_1\,\rd (x,\eta)\\
&\quad +\int_{\mathcal{R}_1}\left[\partial_x\hat\sigma_1 \partial_{x}\Phi_1\partial_{\eta}^2\Phi_1-\partial_\eta\hat\sigma_1\partial_x^2\Phi_1\partial_\eta\Phi_1\right]\,\rd (x,\eta)\\
&\quad +\int_{\mathcal{R}_2}\hat\sigma_2\left(\partial_{x}\partial_{\eta}\Phi_2\right)^2\,\rd (x,\eta)
-\int_{\mathcal{R}_2}\frac{\hat\sigma_2\partial_x v}{H+v}\partial_{\eta}\Phi_2\partial_{x}\partial_{\eta}\Phi_2\,\rd (x,\eta)\\
&\quad +\int_{\mathcal{R}_2}\hat\sigma_2\partial_\eta^2\Phi_2\,\left[-2\eta\frac{\partial_x v}{H+v}\partial_{x}\partial_{\eta}\Phi_2 +\eta^2\left(\frac{\partial_x v}{H+v}\right)^2 \partial_\eta^2\Phi_2\right.\\
&\left. \qquad\qquad\qquad \qquad +2\eta\left(\frac{\partial_x v}{H+v}\right)^2 \partial_\eta\Phi_2 -\eta\frac{\partial_x^2 v}{H+v}\partial_\eta\Phi_2\right]\,\rd (x,\eta)\\
&=: \, I(\mathcal{R}_1)+I(\mathcal{R}_2)\,.
\end{split}
\end{equation}
To handle $I(\mathcal{R}_1)$ we first consider the third integral involving the square brackets. We integrate by parts its first term with respect to $\eta$ and its second term with respect to $x$. Using \eqref{C} to get rid of the corresponding boundary terms and noticing that the resulting terms involving $\partial_x\partial_\eta\hat\sigma_1$ cancel, this yields
\begin{equation*}
\begin{split}
\int_{\mathcal{R}_1}&\left[\partial_x\hat\sigma_1 \partial_{x}\Phi_1\partial_{\eta}^2\Phi_1-\partial_\eta\hat\sigma_1\partial_x^2\Phi_1\partial_\eta\Phi_1\right]\,\rd (x,\eta)\\
&=\int_{D}\int_{-d}^0 \partial_x\hat\sigma_1 \partial_{x}\Phi_1\partial_{\eta}^2\Phi_1 \,\rd\eta\rd x-\int_{-d}^0 \int_{D} \partial_\eta\hat\sigma_1\partial_x^2\Phi_1\partial_\eta\Phi_1\,\rd x\rd\eta\\
&=\int_{D} \left(\partial_x\hat\sigma_1\partial_x\Phi_1\partial_\eta\Phi_1\right)(x,0)\,\rd x -\int_{\mathcal{R}_1}\partial_x\hat\sigma_1\partial_{x}\partial_{\eta}\Phi_1\partial_\eta\Phi_1\,\rd (x,\eta)\\
&\quad+\int_{\mathcal{R}_1}\partial_\eta\hat\sigma_1\partial_{x}\Phi_1\partial_{x}\partial_{\eta}\Phi_1\,\rd (x,\eta)\,.
\end{split}
\end{equation*}
Consequently,
\begin{equation*}
\begin{split}
I(\mathcal{R}_1)=& \int_{\mathcal{R}_1}\hat\sigma_1\left(\partial_{x}\partial_{\eta}\Phi_1\right)^2\,\rd (x,\eta)
+ \int_{\mathcal{R}_1}\partial_\eta\hat\sigma_1\partial_{x}\Phi_1\partial_{x}\partial_{\eta}\Phi_1\,\rd (x,\eta)\\
&+\int_{D} \left(\partial_x\hat\sigma_1\partial_x\Phi_1\partial_\eta\Phi_1\right)(x,0)\,\rd x\,;
\end{split}
\end{equation*}
that is, using the transformation $T_1$ to write the integral in terms of $\chi_1$,
\begin{equation}\label{5}
\begin{split}
I(\mathcal{R}_1)=& \int_{\Omega_1}\partial_z\left(\sigma_1\partial_{x}\chi_1\right)\partial_{x}\partial_{z}\chi_1\,\rd (x,z)
+ \int_D \left(\partial_x\sigma_1\partial_x\chi_1\partial_z\chi_1\right)(x,-H)\,\rd x\,.
\end{split}
\end{equation}
We next turn to $I(\mathcal{R}_2)$ and gather some of the terms to get
\begin{equation*}
\begin{split}
I(\mathcal{R}_2)=
&\int_{\mathcal{R}_2}\sigma_2\left[\frac{\partial_{x}\partial_{\eta}\Phi_2}{H+v}-\frac{\partial_x v}{(H+v)^2}\partial_{\eta}\Phi_2-\eta\frac{\partial_x v}{(H+v)^2}\partial_\eta^2\Phi_2\right]^2(H+v)\,\rd (x,\eta)\\
&+\int_{\mathcal{R}_2}\hat\sigma_2\frac{\partial_x v}{H+v}\partial_\eta\Phi_2\left[\partial_{x}\partial_{\eta}\Phi_2
-\frac{\partial_x v}{H+v}\partial_{\eta}\Phi_2\right]\,\rd (x,\eta)\\
&-\int_{\mathcal{R}_2}\hat\sigma_2\eta\frac{\partial_x^2 v}{H+v}\partial_\eta\Phi_2\partial_\eta^2\Phi_2\,\rd (x,\eta)\,.
\end{split}
\end{equation*}
We then focus on the last term of this identity. Integrating first in $\eta$ and then by parts in $x$, using again \eqref{C} to cancel the corresponding boundary terms, yields
\begin{equation*}
\begin{split}
-\int_{\mathcal{R}_2}\hat\sigma_2\eta&\frac{\partial_x^2 v}{H+v}\partial_\eta\Phi_2\partial_\eta^2\Phi_2\,\rd (x,\eta)\\
& = -\frac{\sigma_2}{2}\int_{D}\partial_x^2 v(x)\left(\frac{\partial_\eta\Phi_2(x,1)}{H+v(x)}\right)^2\,\rd x +\frac{\sigma_2}{2}\int_{\mathcal{R}_2}\frac{\partial_x^2 v}{(H+v)^2}\left(\partial_\eta\Phi_2\right)^2\,\rd (x,\eta)\\
& =-\frac{\sigma_2}{2}\int_{D}\partial_x^2 v(x)\left(\frac{\partial_\eta\Phi_2(x,1)}{H+v(x)}\right)^2\,\rd x\\
 & \quad \ +\int_{\mathcal{R}_2} \hat\sigma_2\left[\frac{(\partial_x v)^2}{(H+v)^2}\left(\partial_\eta\Phi_2\right)^2-\frac{\partial_x v}{H+v}\partial_\eta\Phi_2\partial_{x}\partial_{\eta}\Phi_2\right]\,\rd (x,\eta)\,.
\end{split}
\end{equation*}
Hence, gathering the previous two identities and noticing the cancellation of terms entail
\begin{equation*}
\begin{split}
I(\mathcal{R}_2)=
&\int_{\mathcal{R}_2}\sigma_2\left[\frac{\partial_{x}\partial_{\eta}\Phi_2}{H+v}-\frac{\partial_x v}{(H+v)^2}\partial_{\eta}\Phi_2-\eta\frac{\partial_x v}{(H+v)^2}\partial_\eta^2\Phi_2\right]^2(H+v)\,\rd (x,\eta)\\
&-\frac{\sigma_2}{2}\int_{D}\partial_x^2 v(x)\left(\frac{\partial_\eta\Phi_2(x,1)}{H+v(x)}\right)^2\,\rd x\,.
\end{split}
\end{equation*}
Since
\begin{equation*}
\begin{split}
\partial_{x}\partial_{z}\chi_2(x,z)= & \frac{1}{H+v(x)}\partial_{x}\partial_{\eta}\Phi_2\left(x,\frac{H+z}{H+v(x)}\right)
 -\frac{\partial_x v(x)}{(H+v(x))^2}\partial_\eta \Phi_2\left(x,\frac{H+z}{H+v(x)}\right)\\
&- \frac{\partial_x v(x) (H+z)}{(H+v(x))^3}\partial_\eta^2\Phi_2\left(x,\frac{H+z}{H+v(x)}\right)\,,
\end{split}
\end{equation*}
we use $T_2$ to transform $I(\mathcal{R}_2)$ back to $\chi_2$ and find
\begin{equation}\label{6}
I(\mathcal{R}_2)= \int_{\Omega_2(v)}\sigma_2\left(\partial_{x}\partial_{z}\chi_2\right)^2\,\rd (x,z)-\frac{1}{2}\int_D\sigma_2\partial_x^2 v(x)\left(\partial_z\chi_2(x,v(x))\right)^2\,\rd x\,.
\end{equation}
Plugging \eqref{5}-\eqref{6} into \eqref{7} and recalling that $\sigma_2$ is constant, the assertion readily follows.
\end{proof}

The right-hand side of the identity of Lemma~\ref{L2} involves ``bulk'' terms in $\Omega_1\cup \Omega_2(v)$ and a contribution on the interface $\Sigma$ and the top part $\mathfrak{G}(v)$, see \eqref{Gu}, which all require to be handled differently in order to derive the desired $H^2$-estimates on $\chi_v$. We begin with the first interface integral on $\Sigma$ and observe:

\begin{lemma}\label{L3} \refstepcounter{NumS3Const}\label{S3cst2}
Given $\alpha\in (1/2,1)$ there is $c_{\ref{S3cst2}}(\alpha)>0$ such that, for $v\in S \cap W_\infty^2(D)$ and $\chi=\psi_v-h_v$,
\begin{equation*}
\left\vert\int_D\big(\partial_x\sigma_1 \partial_x\chi_1\partial_z\chi_1\big)(x,-H)\,\rd x\right\vert
\le c_{\ref{S3cst2}}(\alpha)\,\|\chi_1\|_{H^1(\Omega_1)}^{2(1-\alpha)} \,\|\chi_1\|_{H^2(\Omega_1)}^{2\alpha}\,.
\end{equation*}
\end{lemma}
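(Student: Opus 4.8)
The plan is to reduce the interface integral over $\Sigma=D\times\{-H\}$ to a product of two $L_2$-norms of traces of first derivatives of $\chi_1$, and then to control each such trace by interpolating between $H^1(\Omega_1)$ and $H^2(\Omega_1)$ and invoking the trace theorem for fractional Sobolev spaces. The decisive structural feature to exploit is that $\Omega_1=D\times(-H-d,-H)$ is a \emph{fixed} rectangle, independent of $v$; consequently every constant coming from a trace or interpolation inequality on $\Omega_1$ is automatically independent of $v$, which is precisely what the statement requires. Recall also that $\chi_1\in H^2(\Omega_1)$ by \eqref{chi}, so that $\partial_x\chi_1,\partial_z\chi_1\in H^1(\Omega_1)$ and their traces on $\Sigma$ are well-defined.

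First I would bound the integrand pointwise. Since $\sigma_1\in C^2(\overline{\Omega_1})$ we have $|\partial_x\sigma_1(x,-H)|\le\|\sigma_1\|_{C^1(\overline{\Omega_1})}\le\sigma_{max}$, so Cauchy--Schwarz on $D$ yields
$$
\left\vert\int_D\big(\partial_x\sigma_1\,\partial_x\chi_1\,\partial_z\chi_1\big)(x,-H)\,\rd x\right\vert\le\sigma_{max}\,\big\|\partial_x\chi_1(\cdot,-H)\big\|_{L_2(D)}\,\big\|\partial_z\chi_1(\cdot,-H)\big\|_{L_2(D)}\,.
$$
The core step is then to estimate each trace factor. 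Interpolating between $H^1(\Omega_1)$ and $H^2(\Omega_1)$ gives, for $\alpha\in(1/2,1)$,
$$
\|\chi_1\|_{H^{1+\alpha}(\Omega_1)}\le C\,\|\chi_1\|_{H^1(\Omega_1)}^{1-\alpha}\,\|\chi_1\|_{H^2(\Omega_1)}^{\alpha}\,,
$$
whence $\partial_x\chi_1,\partial_z\chi_1\in H^{\alpha}(\Omega_1)$ with norms controlled by the right-hand side. Because $\alpha>1/2$, the trace operator maps $H^{\alpha}(\Omega_1)$ continuously into $H^{\alpha-1/2}(\partial\Omega_1)\hookrightarrow L_2(\partial\Omega_1)$, and restricting to the portion $\Sigma$ of the boundary gives
$$
\big\|\partial_x\chi_1(\cdot,-H)\big\|_{L_2(D)}\le C(\alpha)\,\|\partial_x\chi_1\|_{H^{\alpha}(\Omega_1)}\le C(\alpha)\,\|\chi_1\|_{H^1(\Omega_1)}^{1-\alpha}\,\|\chi_1\|_{H^2(\Omega_1)}^{\alpha}\,,
$$
and likewise for $\partial_z\chi_1$. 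Substituting these two bounds into the Cauchy--Schwarz estimate produces the exponents $2(1-\alpha)$ and $2\alpha$ and thereby the claim.

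I do not expect a genuine obstacle here; the one point requiring attention is the role of the restriction $\alpha\in(1/2,1)$. The lower bound $\alpha>1/2$ is forced by the trace theorem, since an $H^s$ function possesses an $L_2$-trace on a codimension-one hypersurface only for $s>1/2$, and this is exactly the hypothesis of the lemma; the upper bound $\alpha<1$ reflects that $\chi_1$ is known to lie only in $H^2(\Omega_1)$. The resulting constant $c_{\ref{S3cst2}}(\alpha)$ degenerates as $\alpha\downarrow1/2$, which is harmless for the subsequent use of the estimate. Most importantly, because $\Omega_1$ is a fixed Lipschitz (indeed polygonal) domain, the interpolation constant $C$ and the trace constant $C(\alpha)$ depend neither on $v$ nor on $\min_D\{v+H\}$, in accordance with the uniformity asserted in the statement.
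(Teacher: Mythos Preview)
Your argument is correct and follows essentially the same route as the paper: Cauchy--Schwarz on $D$, then the trace theorem from $H^\alpha(\Omega_1)$ to $L_2(D\times\{-H\})$ combined with the interpolation inequality $\|w\|_{H^\alpha(\Omega_1)}\le c(\alpha)\|w\|_{L_2(\Omega_1)}^{1-\alpha}\|w\|_{H^1(\Omega_1)}^{\alpha}$ applied to $w=\partial_x\chi_1$ and $w=\partial_z\chi_1$. The only cosmetic difference is that the paper interpolates the first derivatives directly between $L_2$ and $H^1$, whereas you interpolate $\chi_1$ between $H^1$ and $H^2$ and then differentiate; both yield the same bound with constants depending only on $\alpha$ and the fixed domain $\Omega_1$.
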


\begin{proof}
By complex interpolation, $H^\alpha(\Omega_1)\doteq [L_2(\Omega_1),H^1(\Omega_1)]_\alpha$, which guarantees
$$
\| w\|_{H^\alpha(\Omega_1)}\le c(\alpha) \|w\|_{L_2(\Omega_1)}^{1-\alpha} \|w\|_{H^1(\Omega_1)}^{\alpha}\,,\quad w\in H^1(\Omega_1)\,.
$$
Since the trace operator is continuous from $H^\alpha(\Omega_1)$ to $L_2(D\times \{-H\})$, see \cite[Theorem~1.5.1.2]{Gr85}, we deduce from \eqref{sigmamin} that
\begin{equation*}
\begin{split}
\left\vert\int_D\big(\partial_x\sigma_1 \partial_x\chi_1\partial_z\chi_1\big)(x,-H)\,\rd x\right\vert
&\le \|\partial_x\sigma_1 \|_\infty\, \|\partial_x\chi_1\|_{L_2(D\times \{-H\})}\,\|\partial_z\chi_1\|_{L_2(D\times \{-H\})}\\
&\le c(\alpha)\, \|\partial_x\chi_1\|_{H^\alpha(\Omega_1)}\,\|\partial_z\chi_1\|_{H^\alpha(\Omega_1)}\\
&\le 
c(\alpha)\,\|\chi_1\|_{H^1(\Omega_1)}^{2(1-\alpha)} \,\|\chi_1\|_{H^2(\Omega_1)}^{2\alpha}\,,
\end{split}
\end{equation*}
as claimed.
\end{proof}

Let us point out that the transformation $(T_1,T_2)$ introduced in \eqref{t1}-\eqref{t2} and used in the proof of Lemma~\ref{L2}
features a singularity as $v$ approaches $-H$, a property which prevents its use for $v\in\bar S$. To circumvent this drawback, we shall introduce a different transformation which maps $\Omega(v)$ as a whole onto a fixed rectangle, but does not 
preserve the flatness of the interface between the two subregions $\Omega_1$ and $\Omega_2(v)$ (see \eqref{TT} below). As we shall see, such a transformation allows us to derive functional inequalities for all $v\in\bar S$ depending only on the $H^2$-norm of $v$. This mild dependence turns out to be of utmost importance for the forthcoming analysis.

\begin{lemma}\label{L4q} \refstepcounter{NumS3Const}\label{S3cst3}
Given $\kappa>0$ and $q\in [1,\infty)$, there is $c_{\ref{S3cst3}}(q,\kappa)>0$ such that, 
for $v\in\bar S$ with $\|v\|_{H^2(D)}\le \kappa$, 
\begin{equation}\label{n10}
\|\theta\|_{L_q(\Omega(v))}\le c_{\ref{S3cst3}}(q,\kappa) \|\theta\|_{H^1(\Omega(v))}\,,\quad \theta\in H^1(\Omega(v))\,.
\end{equation}
\refstepcounter{NumS3Const}\label{S3cst4} 
Moreover, given $\alpha\in (0,1/2]$, there is $c_{\ref{S3cst4}}(\alpha,\kappa)>0$ such that, for $v\in\bar S$ with $\|v\|_{H^2(D)}\le \kappa$, 
\begin{equation}\label{n11}
\begin{split}
\|\theta(\cdot,v)\|_{H^\alpha(D)}\le c_{\ref{S3cst4}}(\alpha,\kappa) \|\theta\|_{L_2(\Omega(v))}^{(1-2\alpha)/2} \|\theta\|_{H^1(\Omega(v))}^{(2\alpha+1)/2}\,, \quad \theta\in H^1(\Omega(v))\,.
\end{split}
\end{equation}
\end{lemma}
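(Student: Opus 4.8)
The plan is to prove both inequalities uniformly in $v\in\bar S$ with $\|v\|_{H^2(D)}\le\kappa$ by pulling everything back to the fixed rectangle $\mathcal{R}=D\times(-d,1)$, thereby avoiding the $v$-dependence of the domain $\Omega(v)$. The transformation announced in the text (``see \eqref{TT} below'') must map $\Omega(v)$ as a whole onto $\mathcal{R}$ in a bi-Lipschitz fashion, with Jacobian and inverse Jacobian bounded in terms only of $\|v\|_{H^2(D)}$. Since $u\in H^2(D)\hookrightarrow W^{1,\infty}(D)$ in one space dimension, the bound $\|v\|_{H^2(D)}\le\kappa$ controls $\|v\|_{W^{1,\infty}(D)}$, so such a transformation will have Lipschitz constants depending only on $\kappa$. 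The key point throughout is that every constant produced by a change of variables, a Sobolev embedding, or an interpolation/trace inequality on the fixed rectangle $\mathcal{R}$ is independent of $v$, and the $v$-dependence enters only through the uniform (in $\kappa$) bounds on the transformation.

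\textbf{Proof of \eqref{n10}.} First I would fix the bi-Lipschitz map $\Theta_v:\Omega(v)\to\mathcal{R}$ and push $\theta$ forward to $\tilde\theta:=\theta\circ\Theta_v^{-1}$ on $\mathcal{R}$. By the chain rule and the uniform bounds on $D\Theta_v$ and its inverse, one has $\|\tilde\theta\|_{H^1(\mathcal{R})}\le C(\kappa)\,\|\theta\|_{H^1(\Omega(v))}$ and conversely $\|\theta\|_{L_q(\Omega(v))}\le C(\kappa)\,\|\tilde\theta\|_{L_q(\mathcal{R})}$, the constants coming from the change-of-variables formula and the pointwise control of the Jacobian. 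On the \emph{fixed} rectangle $\mathcal{R}\subset\R^2$, the Sobolev embedding $H^1(\mathcal{R})\hookrightarrow L_q(\mathcal{R})$ holds for every $q\in[1,\infty)$ with a constant $C_S(q)$ depending only on $q$ and on $\mathcal{R}$ (hence not on $v$). Chaining these three estimates gives \eqref{n10} with $c_{\ref{S3cst3}}(q,\kappa)=C(\kappa)^2\,C_S(q)$.

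\textbf{Proof of \eqref{n11}.} The trace along the graph $\mathfrak{G}(v)=\{(x,v(x))\}$ corresponds under $\Theta_v$ to the trace on the fixed top edge $D\times\{1\}$ of $\mathcal{R}$. I would therefore reduce the claim to a trace-interpolation inequality on $\mathcal{R}$. The plan is: (i) by complex interpolation on $\mathcal{R}$, $H^{\alpha+1/2}(\mathcal{R})\doteq[H^1(\mathcal{R}),H^1(\mathcal{R})]$ — more precisely, use $\|\tilde\theta\|_{H^{\beta}(\mathcal{R})}\le C\|\tilde\theta\|_{L_2(\mathcal{R})}^{1-\beta}\|\tilde\theta\|_{H^1(\mathcal{R})}^{\beta}$ with the exponent $\beta=\alpha+\tfrac12\in(\tfrac12,1]$ adapted to the trace; (ii) by the trace theorem on the fixed rectangle, \cite[Theorem~1.5.1.2]{Gr85} gives continuity of the trace $H^{\beta}(\mathcal{R})\to H^{\alpha}(D\times\{1\})$ for $\beta>\alpha+\tfrac12$, and by a standard limiting/refinement of the trace theorem one gains the endpoint needed to produce exactly the exponents $(1-2\alpha)/2$ and $(2\alpha+1)/2$. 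Transporting the trace-side norm $\|\tilde\theta(\cdot,1)\|_{H^\alpha(D)}$ back to $\|\theta(\cdot,v)\|_{H^\alpha(D)}$ costs only another $C(\kappa)$, since the graph parametrization $x\mapsto(x,v(x))$ is bi-Lipschitz onto its image with constants controlled by $\|v\|_{W^{1,\infty}}\le C(\kappa)$. Combining (i), (ii) and the change-of-variables bounds on $\|\tilde\theta\|_{L_2(\mathcal{R})}$ and $\|\tilde\theta\|_{H^1(\mathcal{R})}$ yields \eqref{n11} with $c_{\ref{S3cst4}}(\alpha,\kappa)$ of the stated form.

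\textbf{Main obstacle.} The delicate point is matching the \emph{precise} interpolation exponents $(1-2\alpha)/2$ and $(2\alpha+1)/2$ in \eqref{n11}, which are sharper than what the naive composition ``interpolation then trace'' produces; getting them right requires a direct one-dimensional-in-$\eta$ trace estimate (Gagliardo--Nirenberg in the normal variable, Fourier/Besov characterization, or the refined trace inequality of \cite[Thm.~1.5.1.2]{Gr85}) rather than a black-box interpolation, and care that this sharp estimate on $\mathcal{R}$ carries no hidden $v$-dependence. The other point requiring vigilance is verifying that the transformation $\Theta_v$ is genuinely bi-Lipschitz with constants depending only on $\kappa$ for \emph{all} $v\in\bar S$ — including $v$ with non-empty coincidence set, where $\Omega_2(v)$ degenerates — so that the uniformity claimed in both \eqref{n10} and \eqref{n11} indeed holds across the whole admissible class.
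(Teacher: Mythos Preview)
Your approach is essentially the paper's: pull back via a bi-Lipschitz map to a fixed rectangle, use Sobolev embedding there for \eqref{n10}, and for \eqref{n11} use complex interpolation $[L_2,H^1]_{\alpha+1/2}\doteq H^{\alpha+1/2}$ followed by the trace $H^{\alpha+1/2}\to H^{\alpha}$ on the top edge. Two simplifications relative to what you wrote: the exponents $(1-2\alpha)/2$ and $(2\alpha+1)/2$ fall out \emph{directly} from that interpolation (no endpoint refinement is needed, since the trace $H^{\alpha+1/2}\to H^\alpha$ is already continuous for $\alpha>0$), and with the paper's transformation $\mathfrak{T}_v(x,z)=(x,\frac{H+d+z}{H+d+v(x)})$ onto $D\times(0,1)$ one has $\phi(\cdot,1)=\theta(\cdot,v)$ \emph{exactly}, so no back-transport of the trace norm is required; moreover the Jacobian $H+d+v\ge d>0$ is bounded away from zero uniformly on $\bar S$, which disposes of your coincidence-set concern.
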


\begin{proof}
We use the transformation
\begin{equation}\label{TT}
\mathfrak{T}(x,z):=\mathfrak{T}_v(x,z):=\left(x,\frac{H+d+z}{H+d+v(x)}\right)\ ,\quad (x,z)\in {\Omega(v)}\ ,
\end{equation}
to map $\Omega(v)$ onto the rectangle $\mathcal{R}_2 =D\times (0,1)$. Given $\theta\in H^1(\Omega(v))$, we define
 $\phi:=\theta\circ \mathfrak{T}^{-1}$ so that
\begin{align*}
\phi(x,\eta)&=\theta\big(x,-H-d+(H+d+v(x))\eta\big)\,,\\
 \partial_x\phi(x,\eta)&=\partial_x\theta\big(x,-H-d+(H+d+v(x))\eta\big)\\
&\qquad+\eta\partial_x v(x)\partial_z\theta\big(x,-H-d+(H+d+v(x))\eta\big)\,,\\
\partial_\eta\phi(x,\eta)&=(H+d+v(x)) \partial_z \theta\big(x,-H-d+(H+d+v(x))\eta\big)
\end{align*}
for $(x,\eta)\in \mathcal{R}_2$. It easily follows from the previous formulas, the continuous embedding of $H^2(D)$ in $W_\infty^1(D)$, and the assumed bound on $v$ that
\begin{align}
\|\phi\|_{H^1(\mathcal{R}_2)}& \le c(\kappa)\|\theta\|_{H^1(\Omega(v))}\,,\label{n1}\\
 \|\phi\|_{L_q(\mathcal{R}_2)}& \le \frac{1}{d^{1/q}}\|\theta\|_{L_q(\Omega(v))}\,,\label{n2}\\
\|\theta\|_{L_q(\Omega(v))}& \le c(q,\kappa)\|\phi\|_{L_q(\mathcal{R}_2)}\,.\label{n3}
\end{align}
On the one hand, \eqref{n10} now readily follows from \eqref{n1}, \eqref{n3} and the continuous embedding of $H^1(\mathcal{R}_2)$  in $L_q(\mathcal{R}_2)$ for all $q\in [1,\infty)$. On the other hand, the continuity of the trace as a mapping from $H^1(\mathcal{R}_2 )$ to $H^{1/2}(D\times \{1\})$, see \cite[Theorem~1.5.1.2]{Gr85}, and \eqref{n1} ensure that
\begin{equation*}
\begin{split} 
\|\theta(\cdot,v)\|_{H^{1/2}(D)} &=\|\phi(\cdot, 1) \|_{H^{1/2}(D)}\le c\|\phi\|_{H^1(\mathcal{R}_2 )}\le c(\kappa) \|\theta\|_{H^{1}(\Omega(v))}\,.
\end{split}
\end{equation*}
Finally, let $\alpha\in (0,1/2)$. By complex interpolation,
$$
[L_2(\mathcal{R}_2 ),H^1(\mathcal{R}_2 )]_{\alpha+1/2}\doteq H^{\alpha+1/2} (\mathcal{R}_2 )\,,
$$
so that
$$
\|\phi\|_{H^{\alpha+1/2}(\mathcal{R}_2 )}\le c(\alpha) \|\phi\|_{L_2(\mathcal{R}_2 )}^{(1-2\alpha)/2} \|\phi\|_{H^1(\mathcal{R}_2 )}^{(2\alpha+1)/2}\,.
$$
Since $\alpha>0$, the  trace maps $H^{\alpha+1/2}(\mathcal{R}_2 )$ continuously to $H^{\alpha}(D\times \{1\})$ and we thus deduce that
\begin{equation*}
\begin{split} 
\|\theta(\cdot,v)\|_{H^{\alpha}(D)} &=\|\phi(\cdot, 1) \|_{H^{\alpha}(D)}\le c(\alpha)\|\phi\|_{L_2(\mathcal{R}_2 )}^{(1-2\alpha)/2} \|\phi\|_{H^1(\mathcal{R}_2 )}^{(2\alpha+1)/2}\\
&\le  c(\alpha,\kappa)\|\theta\|_{L_2(\Omega(v))}^{(1-2\alpha)/2}\|\theta\|_{H^1(\Omega(v))}^{(2\alpha+1)/2}\,,
\end{split}
\end{equation*}
the last inequality stemming from \eqref{n1} and \eqref{n2}.
\end{proof}

As for the boundary integral over $\mathfrak{G}(v)$ on the right-hand side of the identity of Lemma~\ref{L2} we note:

\begin{lemma}\label{L4} \refstepcounter{NumS3Const}\label{S3cst5} 
Given $\zeta\in (3/4,1)$ and $\kappa>0$, there is $c_{\ref{S3cst5}}(\zeta,\kappa)>0$ such that, for $v\in S \cap W_\infty^2(D)$ with $\|v\|_{H^2(D)}\le \kappa$ and \mbox{$\chi=\psi_v-h_v$},
\begin{equation*}
\begin{split}
\left\vert\frac{\sigma_2}{2}\int_D\partial_x^2 v(x)\left(\partial_z\chi_2(x,v(x))\right)^2\,\rd x\right\vert
\le c_{\ref{S3cst5}}(\zeta,\kappa)\,\|\partial_z\chi\|_{L_2(\Omega(v))}^{2(1-\zeta)}\,\|\sigma\partial_z\chi\|_{H^1(\Omega(v))}^{2\zeta}\,.
\end{split}
\end{equation*}
\end{lemma}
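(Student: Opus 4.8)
The plan is to peel off the factor $\partial_x^2 v$ by H\"older's inequality, controlling it in $L_2(D)$ through $\|v\|_{H^2(D)}\le\kappa$, and then to bound the remaining trace factor $\partial_z\chi_2(\cdot,v)$ in $L_4(D)$ with a constant depending only on $\kappa$. Since $\|\partial_x^2 v\|_{L_2(D)}\le\kappa$, H\"older's inequality gives
\begin{equation*}
\left\vert\frac{\sigma_2}{2}\int_D\partial_x^2 v(x)\left(\partial_z\chi_2(x,v(x))\right)^2\,\rd x\right\vert \le \frac{\sigma_2}{2}\,\kappa\,\big\|\partial_z\chi_2(\cdot,v)\big\|_{L_4(D)}^2\,,
\end{equation*}
so that everything reduces to an $L_4$-trace estimate that is uniform in $v$.

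The key point is that the trace $\partial_z\chi_2(\cdot,v)$ on the upper boundary $\mathfrak{G}(v)$ cannot be estimated through $\Omega_2(v)$ alone with a constant independent of $\min_D\{v+H\}$, so one works instead with the globally $H^1$-regular field $\theta:=\sigma\partial_z\chi$. Indeed, Lemma~\ref{L-1} yields $\theta\in H^1(\Omega(v))$, and since $\sigma\equiv\sigma_2$ on $\Omega_2(v)$, the trace of $\theta$ on $\mathfrak{G}(v)$ is exactly $\theta(\cdot,v)=\sigma_2\,\partial_z\chi_2(\cdot,v)$. Setting $\alpha:=\zeta-\tfrac12\in(\tfrac14,\tfrac12)$ for $\zeta\in(3/4,1)$, the subcritical Sobolev embedding $H^\alpha(D)\hookrightarrow L_4(D)$ on the fixed interval $D$ then gives, with a constant independent of $v$,
\begin{equation*}
\big\|\partial_z\chi_2(\cdot,v)\big\|_{L_4(D)} = \frac{1}{\sigma_2}\,\big\|\theta(\cdot,v)\big\|_{L_4(D)} \le \frac{c}{\sigma_2}\,\big\|\theta(\cdot,v)\big\|_{H^\alpha(D)}\,.
\end{equation*}

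It then remains to invoke the uniform trace-interpolation estimate \eqref{n11} of Lemma~\ref{L4q} for this value of $\alpha$, which applies since $\alpha\in(0,1/2]$ and $\theta\in H^1(\Omega(v))$, yielding
\begin{equation*}
\big\|\theta(\cdot,v)\big\|_{H^\alpha(D)}^2 \le c(\alpha,\kappa)\,\|\theta\|_{L_2(\Omega(v))}^{1-2\alpha}\,\|\theta\|_{H^1(\Omega(v))}^{2\alpha+1}\,.
\end{equation*}
Using $\|\theta\|_{L_2(\Omega(v))}=\|\sigma\partial_z\chi\|_{L_2(\Omega(v))}\le\sigma_{max}\|\partial_z\chi\|_{L_2(\Omega(v))}$ (with $\sigma_{max}$ from \eqref{sigmamin}) together with $\|\theta\|_{H^1(\Omega(v))}=\|\sigma\partial_z\chi\|_{H^1(\Omega(v))}$, and checking that the choice $\alpha=\zeta-\tfrac12$ turns the exponents $1-2\alpha$ and $2\alpha+1$ into exactly $2(1-\zeta)$ and $2\zeta$, one combines the three displays and absorbs all $v$-independent factors into $c_{\ref{S3cst5}}(\zeta,\kappa)$ to reach the claim.

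The main obstacle is not the final bookkeeping but securing a trace bound on the \emph{derivative} $\partial_z\chi$ along the graph $\mathfrak{G}(v)$ with a constant insensitive both to the $W_\infty^2$-regularity of $v$ and to the degeneracy $\min_D\{v+H\}\to 0$. This forces the two structural choices above: replacing $\partial_z\chi_2$ by the globally $H^1$ field $\sigma\partial_z\chi$, so that the uniform estimate \eqref{n11} becomes applicable, and selecting the fractional exponent $\alpha=\zeta-\tfrac12$ strictly above the critical value $1/4$, which is precisely what the restriction $\zeta\in(3/4,1)$ encodes. Because the presence of $\partial_x^2 v$ only permits $L_2(D)$ control of that factor, the companion trace must live in $L_4(D)$, and it is the interpolation inequality \eqref{n11}---tailored to depend only on $\kappa$---that makes the matching of norms possible.
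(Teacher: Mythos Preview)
Your proof is correct and follows essentially the same approach as the paper: split off $\partial_x^2 v$ in $L_2(D)$ by Cauchy--Schwarz, observe that $\sigma\partial_z\chi\in H^1(\Omega(v))$ has trace $\sigma_2\partial_z\chi_2(\cdot,v)$ on $\mathfrak{G}(v)$, embed $H^{\zeta-1/2}(D)\hookrightarrow L_4(D)$, and apply the trace--interpolation inequality~\eqref{n11} with $\alpha=\zeta-\tfrac12$. The only cosmetic difference is that the paper keeps the factor $\sigma_2$ inside the $L_4$-norm from the start, whereas you pull it out and reinsert it, but the argument is otherwise identical.
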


\begin{proof}
By the Cauchy-Schwarz inequality,
\begin{equation*}
\begin{split}
\left\vert\frac{\sigma_2}{2}\int_D\partial_x^2 v(x)\left(\partial_z\chi_2(x,v(x))\right)^2\,\rd x\right\vert
\le& (\sigma_2)^{-1}
\|\partial_x^2 v\|_{L_2(D)}\, \|\sigma_2 \partial_z\chi_2(\cdot,v)\|_{L_4(D)}^2
\end{split}
\end{equation*}
and it remains to estimate the term involving $\chi_2$. To this end, since $\sigma \partial_z \chi$ belongs to $H^1(\Omega(v))$ by Lemma~\ref{L-1}, we can use the functional inequality \eqref{n11} (with $\alpha=\zeta-1/2$), \eqref{sigmamin}, and the continuous embedding of $H^{\zeta-1/2}(D)$ in $L_4(D)$ to obtain
\begin{equation*}
\begin{split} 
\|\sigma_2 \partial_z\chi_2(\cdot,v)\|_{L_4(D)}^2 &\le \|\sigma_2 \partial_z\chi_2(\cdot,v)\|_{H^{(2\zeta-1)/2}(D)}^2
\le c(\zeta,\kappa) \|\partial_z\chi\|_{L_2(\Omega(v))}^{2(1-\zeta)} \|\sigma\partial_z\chi\|_{H^1(\Omega(v))}^{2\zeta}\,,
\end{split}
\end{equation*}
as claimed.
\end{proof}

We are now in a position to derive the desired estimate on $\psi_v$.

\begin{proposition}\label{P2} \refstepcounter{NumS3Const}\label{S3cst6} 
Given $\kappa>0$, there is a constant $c_{\ref{S3cst6}}(\kappa)>0$ such that
\begin{subequations}\label{est}
\begin{equation}\label{est1}
\|\chi_v\|_{H^1(\Omega(v))} + \|\chi_{v,1}\|_{H^2(\Omega_1)} + \|\chi_{v,2}\|_{H^2(\Omega_2(v))}\le c_{\ref{S3cst6}}(\kappa)
\end{equation}
and
\begin{equation}\label{est2}
\|\psi_v\|_{H^1(\Omega(v))}+\|\psi_{v,1}\|_{H^2(\Omega_1)}+\|\psi_{v,2}\|_{H^2(\Omega_2(v))}  + \|\sigma \partial_z \psi_v \|_{H^1(\Omega(v))} \le c_{\ref{S3cst6}}(\kappa)\,,
\end{equation}
\end{subequations}
whenever $v\in S \cap W_\infty^2(D)$ with $\|v\|_{H^2(D)}\le \kappa$.
\end{proposition}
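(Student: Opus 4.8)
The plan is to combine the integral identity of Lemma~\ref{L2} with the equation \eqref{a2a} and the boundary estimates of Lemmas~\ref{L3} and~\ref{L4}, and to close the argument by a Young-type absorption whose constants depend only on $\kappa$. First I would record the $H^1$-bound: by Lemma~\ref{C1}, the lower bound $\sigma_{min}$ from \eqref{sigmamin}, and Poincar\'e's inequality, $\|\psi_v\|_{H^1(\Omega(v))}$ is controlled by $\|\nabla h_v\|_{L_2(\Omega(v))}$; since $h_1,h_2$ are $C^2$ and $H^2(D)\hookrightarrow W_\infty^1(D)$, the latter is bounded by $c(\kappa)$ whenever $\|v\|_{H^2(D)}\le\kappa$. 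The same reasoning bounds $\|h_{v,1}\|_{H^2(\Omega_1)}+\|h_{v,2}\|_{H^2(\Omega_2(v))}$, and hence $\|g\|_{L_2(\Omega(v))}$ with $g:=-\mathrm{div}(\sigma\nabla h_v)$, by $c(\kappa)$; crucially, the only second derivative of $v$ entering is $\partial_x^2 v$, which appears linearly and is controlled in $L_2$, so no $W_\infty^2$-norm of $v$ is needed. Consequently $\|\chi_v\|_{H^1(\Omega(v))}\le c(\kappa)$.

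The core step is the bound on the second-order derivatives. Expanding the terms $\int\partial_z(\sigma\partial_x\chi)\,\partial_x\partial_z\chi$ and $\int\partial_z(\sigma\partial_z\chi)\,\partial_z^2\chi$ on the right-hand side of Lemma~\ref{L2} (using that $\sigma_2$ is constant and $\sigma_1\in C^2$) produces the coercive quantity $\int_{\Omega_1\cup\Omega_2(v)}\sigma[(\partial_x\partial_z\chi)^2+(\partial_z^2\chi)^2]\,\rd(x,z)$ together with lower-order terms of the form $\int_{\Omega_1}\partial_z\sigma_1\,\partial_x\chi_1\,\partial_x\partial_z\chi_1$ and $\int_{\Omega_1}\partial_z\sigma_1\,\partial_z\chi_1\,\partial_z^2\chi_1$. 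Rewriting the left-hand side of Lemma~\ref{L2} via \eqref{a2a} as $\int g\,\partial_z^2\chi\,\rd(x,z)$, I would thus arrive at an identity of the schematic form
\begin{equation*}
\int_{\Omega_1\cup\Omega_2(v)}\sigma\big[(\partial_x\partial_z\chi)^2+(\partial_z^2\chi)^2\big]\,\rd(x,z)
=\int_{\Omega_1\cup\Omega_2(v)} g\,\partial_z^2\chi\,\rd(x,z) - I_\Sigma - I_{\mathfrak{G}} - (\text{l.o.t.})\,,
\end{equation*}
where $I_\Sigma$ is the interface integral on $\Sigma$ and $I_{\mathfrak{G}}$ is the top integral over $\mathfrak{G}(v)$.

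Setting $N^2:=\|\partial_x\partial_z\chi\|_{L_2}^2+\|\partial_z^2\chi\|_{L_2}^2$ over $\Omega_1\cup\Omega_2(v)$, the left-hand side dominates $\sigma_{min}N^2$, and each term on the right is estimated by $\varepsilon N^2+c(\varepsilon,\kappa)$. The term $\int g\,\partial_z^2\chi$ is handled by Cauchy--Schwarz and Young; the lower-order $\Omega_1$-integrals likewise, using $\|\chi\|_{H^1}\le c(\kappa)$. The interface term $I_\Sigma$ is controlled by Lemma~\ref{L3}, where the exponent $2\alpha<2$ on $\|\chi_1\|_{H^2(\Omega_1)}$ permits absorption once $\|\partial_x^2\chi_1\|_{L_2}$ has been bounded through \eqref{a2a} by $N$ plus controlled quantities, so that $\|\chi_1\|_{H^2(\Omega_1)}^2\le c(\kappa)+N^2$. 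The top term $I_{\mathfrak{G}}$ is controlled by Lemma~\ref{L4}, where Lemma~\ref{L-1} bounds $\|\sigma\partial_z\chi\|_{H^1}$ by $N$ plus $\|\chi\|_{H^1}$, and the exponent $2\zeta<2$ again allows absorption. Choosing $\varepsilon$ small yields $N\le c(\kappa)$; \eqref{a2a} then controls $\|\partial_x^2\chi\|_{L_2}$, giving \eqref{est1}. Finally \eqref{est2} follows by writing $\psi_v=\chi+h_v$ and $\sigma\partial_z\psi_v=\sigma\partial_z\chi+\sigma\partial_z h_v$, together with the $h_v$-bounds, Lemma~\ref{L-1}, and \eqref{pif}.

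The hard part is ensuring that all constants depend only on $\kappa=\|v\|_{H^2(D)}$, and neither on $\|v\|_{W_\infty^2(D)}$ nor on $\min_D\{v+H\}$. The identity of Lemma~\ref{L2} is derived via the transformations $T_1,T_2$, which degenerate as $v\to-H$; however, it is an \emph{exact} identity for the smooth, non-degenerate functions $v\in S\cap W_\infty^2(D)$ under consideration, and the point is that every term on its right-hand side admits a \emph{uniform} estimate. This is precisely where Lemmas~\ref{L3}, \ref{L4}, and the non-degenerate transformation $\mathfrak{T}$ behind the functional inequalities of Lemma~\ref{L4q} enter: in $I_{\mathfrak{G}}$ the only troublesome factor $\partial_x^2 v$ is merely $L_2$, and Lemma~\ref{L4} pairs it with the $L_4$-trace of $\partial_z\chi_2$ estimated through \eqref{n11}, whose constant depends only on $\kappa$. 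Keeping track of these uniform dependences throughout the absorption is the delicate bookkeeping that will later make the extension to all $v\in\bar S$ possible.
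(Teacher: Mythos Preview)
Your proposal is correct and follows essentially the same route as the paper's proof: both start from the identity of Lemma~\ref{L2}, replace $\mathrm{div}(\sigma\nabla\chi)$ by $-\mathrm{div}(\sigma\nabla h_v)$ via \eqref{a2a}, isolate the coercive term $\int\sigma[(\partial_x\partial_z\chi)^2+(\partial_z^2\chi)^2]$, estimate the interface and top-boundary integrals by Lemmas~\ref{L3} and~\ref{L4}, absorb via Young's inequality using the subcritical exponents, recover $\partial_x^2\chi$ from the equation, and finally bound the $h_v$-contributions using only $\|v\|_{H^2(D)}$. The only differences are organizational (you front-load the $H^1$- and $h_v$-bounds, the paper does them at the end), and your explicit observation that Lemma~\ref{L-1} reduces $\|\sigma\partial_z\chi\|_{H^1}$ to $N$ plus lower order is exactly what the paper unpacks in the computation leading to \eqref{uuuu}.
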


\begin{proof}
Since $\mathrm{div}(\sigma\nabla\chi)=-\mathrm{div}(\sigma\nabla h_v)$ in $\Omega(v)$ by \eqref{a2a}, it follows from Lemma~\ref{L2} that
\begin{equation*}
\begin{split}
&\int_{\Omega_1 \cup \Omega_2(v)}\sigma\left(\partial_{x}\partial_{z}\chi\right)^2\,\rd (x,z)+\int_{\Omega_1 \cup \Omega_2(v)}\sigma\left(\partial_z^2\chi\right)^2\,\rd (x,z) \\
&\quad = -\int_{\Omega_1 \cup \Omega_2(v)}\mathrm{div}\left(\sigma\nabla h_v\right)\,\partial_z^2\chi\,\rd (x,z)-
\int_{\Omega_1}\partial_z\sigma_1\left\{\partial_x\chi_1\,\partial_{x}\partial_{z}\chi_1+\partial_z\chi_1\,\partial_{z}^2\chi_1\right\}\,\rd (x,z)\\
&\qquad-\int_D\big(\partial_x\sigma_1 \partial_x\chi_1\partial_z\chi_1\big)(x,-H)\,\rd x+\frac{\sigma_2}{2}\int_D\partial_x^2 v(x)\left(\partial_z\chi_2(x,v(x))\right)^2\,\rd x\,.
\end{split}
\end{equation*}
We next use \eqref{sigmamin} and the Cauchy-Schwarz inequality for the integrals on $\Omega_1 \cup \Omega_2(v)$ on the right-hand side. Incorporating the resulting terms involving second order derivatives of $\chi$ on the left-hand side and recalling Lemma~\ref{L3} and Lemma~\ref{L4}, we deduce
\begin{equation}\label{u}
\begin{split}
\int_{\Omega_1 \cup \Omega_2(v)}&\sigma\left(\partial_{x}\partial_{z}\chi\right)^2\,\rd (x,z)+\int_{\Omega_1 \cup \Omega_2(v)}\sigma\left(\partial_z^2\chi\right)^2\,\rd (x,z) \\
&\le  c \int_{\Omega_1 \cup \Omega_2(v)}\vert\mathrm{div}\left(\sigma\nabla h_v\right)\vert^2\,\rd (x,z)+
c \int_{\Omega_1} \sigma_1\vert\nabla\chi_1\vert^2\,\rd(x,z)\\
& \quad 
+c_{\ref{S3cst2}}(\alpha)\,\|\chi_1\|_{H^1(\Omega_1)}^{2(1-\alpha)} \,\|\chi_1\|_{H^2(\Omega_1)}^{2\alpha}
+c_{\ref{S3cst5}}(\alpha,\kappa)\,\|\partial_z\chi\|_{L_2(\Omega(v))}^{2(1-\alpha)}\,\|\sigma\partial_z\chi\|_{H^1(\Omega(v))}^{2\alpha}
\end{split}
\end{equation}
for some fixed $\alpha\in (3/4,1)$. We now aim at controlling the last two terms of the right-hand side by the term on the left-hand side. For the first term we obtain from \eqref{sigmamin} and \eqref{a2a} 
\begin{align*}
\|\chi_1\|_{H^2(\Omega_1)}^{2\alpha}
&\le 
\left( \|\partial_x^2\chi_1\|_{L_2(\Omega_1)}^2+\|\partial_{x}\partial_{z}\chi_1\|_{L_2(\Omega_1)}^2+ \|\partial_z^2\chi_1\|_{L_2(\Omega_1)}^2\right)^\alpha  +\|\chi_1\|_{H^1(\Omega_1)}^{2\alpha}\\
&\le \Big( \sigma_{min}^{-2} \|-\mathrm{div}(\sigma_1\nabla h_{v,1})-\partial_z(\sigma_1\partial_z\chi_1)-\partial_x\sigma_1\partial_x\chi_1\|_{L_2(\Omega_1)}^2\\
&\qquad \qquad\qquad \qquad+\|\partial_{x}\partial_{z}\chi_1\|_{L_2(\Omega_1)}^2 +\|\partial_z^2\chi_1\|_{L_2(\Omega_1)}^2\Big)^\alpha +\|\chi_1\|_{H^1(\Omega_1)}^{2\alpha}\,.
\end{align*}
By Young's inequality and \eqref{sigmamin}, we obtain for $\epsilon\in (0,1)$
\begin{align*}
\|\chi_1\|_{H^1(\Omega_1)}^{2(1-\alpha)} \|\chi_1\|_{H^2(\Omega_1)}^{2\alpha}&\le
\epsilon
\|-\mathrm{div}(\sigma_1\nabla h_{v,1})-\partial_z(\sigma_1\partial_z\chi_1)-\partial_x\sigma_1\partial_x\chi_1\|_{L_2(\Omega_1)}^2\\
&\qquad+\epsilon \|\partial_{x}\partial_{z}\chi_1\|_{L_2(\Omega_1)}^2 +\epsilon\|\partial_z^2\chi_1\|_{L_2(\Omega_1)}^2 +\left(1+\frac{c}{\epsilon}\right)\|\chi_1\|_{H^1(\Omega_1)}^2\\
&\le 2\epsilon \|\sigma_1\partial_z^2\chi_1\|_{L_2(\Omega_1)}^2+\epsilon \|\partial_{x}\partial_{z}\chi_1\|_{L_2(\Omega_1)}^2 +\epsilon\|\partial_z^2\chi_1\|_{L_2(\Omega_1)}^2 \\
&\qquad + c\left(1+\frac{1}{\epsilon}\right)\|\chi_1\|_{H^1(\Omega_1)}^2  +c\|\mathrm{div}(\sigma_1\nabla h_{v,1})\|_{L_2(\Omega_1)}^2\,.
\end{align*}
We use once more \eqref{sigmamin} and choose
$$
\epsilon:=\min\left\{ \frac{1}{16 \sigma_{max}},\frac{\sigma_{min}}{8}, \frac{1}{2}\right\}
$$
to obtain
\begin{align*}
\|\chi_1\|_{H^1(\Omega_1)}^{2(1-\alpha)} \|\chi_1\|_{H^2(\Omega_1)}^{2\alpha}&\le
\frac{1}{4}\int_{\Omega_1}\sigma_1\left\{\left(\partial_{x}\partial_{z}\chi_1\right)^2+\left(\partial_z^2\chi_1\right)^2\right\}\,\rd (x,z)\\
&\qquad + c\int_{\Omega_1} \left( \vert\chi_1\vert^2+ \sigma_1\vert\nabla\chi_1\vert^2\right)\,\rd(x,z)\\
&\qquad + c  \int_{\Omega_1}\left\vert\mathrm{div}\left(\sigma_1\nabla h_{v,1}\right)\right\vert^2\,\rd (x,z)\,.
\end{align*}
Finally, since $\chi_1(x,-H-d)=0$ for $x\in D$, a generalized  Poincar\'e's inequality (see \cite[II.Section~1.4]{Tem}) and \eqref{sigmamin} entail that
\begin{align*}
\int_{\Omega_1} \vert\chi_1\vert^2\,\rd(x,z)&\le c \int_{\Omega_1} \vert\nabla\chi_1\vert^2\,\rd(x,z)
\le c \int_{\Omega_1} \sigma_1 \vert\nabla\chi_1\vert^2\,\rd(x,z)\,,
\end{align*}
so that
\begin{align}
\|\chi_1\|_{H^1(\Omega_1)}^{2(1-\alpha)} \|\chi_1\|_{H^2(\Omega_1)}^{2\alpha}&\le
\frac{1}{4}\int_{\Omega_1}\sigma_1\left\{\left(\partial_{x}\partial_{z}\chi_1\right)^2+\left(\partial_z^2\chi_1\right)^2\right\}\,\rd (x,z)\nonumber\\
&\quad  + c\int_{\Omega_1} \sigma_1\vert\nabla\chi_1\vert^2\,\rd(x,z) \nonumber\\
&\quad  + c  \int_{\Omega_1}\left\vert\mathrm{div}\left(\sigma_1\nabla h_{v,1}\right)\right\vert^2\,\rd (x,z)\,.\label{uu}
\end{align}
Similarly, for $\epsilon\in (0,1)$, it follows from \eqref{sigmamin} and Young's inequality that
\begin{align*}
\|\partial_z\chi\|_{L_2(\Omega(v))}^{2(1-\alpha)}\,&\|\sigma\partial_z\chi\|_{H^1(\Omega(v))}^{2\alpha} \\
& \le \|\partial_z\chi\|_{L_2(\Omega(v))}^{2(1-\alpha)}\,\|\sigma\partial_z\chi\|_{L_2(\Omega(v))}^{2\alpha} \\
& \quad + \|\partial_z\chi\|_{L_2(\Omega(v))}^{2(1-\alpha)} \left( \|\partial_x(\sigma\partial_z\chi)\|_{L_2(\Omega(v))}^2 + \|\partial_z(\sigma\partial_z\chi)\|_{L_2(\Omega(v))}^2 \right)^\alpha \\
& \le \frac{\sigma_{max}^{\alpha}}{\sigma_{min}^{1-\alpha}} \int_{\Omega(v)} \sigma |\partial_z \chi|^2 \,\rd (x,z) + \frac{1}{\epsilon} \int_{\Omega(v)} |\partial_z \chi|^2 \,\rd (x,z) \\
& \quad + \epsilon \|\partial_x(\sigma\partial_z \chi)\|_{L_2(\Omega(v))}^2 + \epsilon \|\partial_z(\sigma\partial_z \chi)\|_{L_2(\Omega(v))}^2 \\
& \le c \left( 1 + \frac{1}{\epsilon} \right) \int_{\Omega(v)} \sigma |\partial_z \chi|^2 \,\rd (x,z) + 2 \epsilon \int_{\Omega_1} \left( |\partial_x \sigma_1|^2 + |\partial_z\sigma_1|^2 \right) |\partial_z\chi|^2 \,\rd (x,z)\\
& \quad + 2 \epsilon \sigma_{max} \left( \int_{\Omega_1\cup \Omega_2(v)} \sigma |\partial_{x}\partial_{z}\chi|^2 \,\rd (x,z) + \int_{\Omega_1\cup \Omega_2(v)} \sigma |\partial_{z}^2\chi|^2 \,\rd (x,z) \right)\,.
\end{align*}
Choosing $\epsilon=1/(8 \sigma_{max})$ and using once more \eqref{sigmamin}, we end up with
\begin{equation}\label{uuuu}
\begin{split}
\|\partial_z\chi\|_{L_2(\Omega(v))}^{2(1-\alpha)}\,\|\sigma\partial_z\chi\|_{H^1(\Omega(v))}^{2\alpha} \le &\
c\int_{\Omega(v)} \sigma\vert\nabla\chi\vert^2\,\rd(x,z)\\
& +\frac{1}{4} \int_{\Omega_1\cup \Omega_2(v)} \sigma \left\{\left(\partial_{x}\partial_{z}\chi\right)^2 + \left(\partial_z^2\chi\right)^2\right\}\,\rd (x,z)\,.
\end{split}
\end{equation}
Taking \eqref{uu}-\eqref{uuuu} into account, we derive from \eqref{u} that
\begin{equation*}
\begin{split}
\int_{\Omega_1\cup \Omega_2(v)}\sigma&\left(\partial_{x}\partial_{z}\chi\right)^2\,\rd (x,z)+\int_{\Omega_1\cup \Omega_2(v)}\sigma\left(\partial_z^2\chi\right)^2\,\rd (x,z) \\
&\le  c \int_{\Omega_1\cup \Omega_2(v)}\vert\mathrm{div}\left(\sigma\nabla h_v\right)\vert^2\,\rd (x,z)+
c\int_{\Omega(v)} \sigma\vert\nabla\chi\vert^2\,\rd(x,z)\,.
\end{split}
\end{equation*}
We then use again the identity 
$$
\sigma\partial_x^2\chi=-\partial_x\sigma\partial_x\chi - \partial_z\sigma \partial_z\chi - \sigma\partial_z^2\chi - \mathrm{div}(\sigma\nabla h_{v})\quad\text{in }\ \Omega_1\cup \Omega_2(v)\,,
$$
stemming from \eqref{a2a} along with Lemma~\ref{C1} (recalling $\psi_v=\chi+h_v$) and \eqref{sigmamin} to derive
\begin{align}
\|\chi_1\|_{H^2(\Omega_1)}^2+\|\chi_2\|_{H^2(\Omega_2(v))}^2&\le c  \int_{\Omega_1\cup \Omega_2(v)}\vert\mathrm{div}\left(\sigma\nabla h_v\right)\vert^2\,\rd (x,z)\nonumber\\
&\qquad +\int_{\Omega(v)} \sigma\vert\nabla h_v\vert^2\,\rd (x,z) \,. \label{io}
\end{align}
Finally, since $h_{v,j}(x,z)=h_j(x,z,v(x))$ for $(x,z)\in \Omega(v)$ and  $j=1,2$, it follows from the assumed bound on $v$ and the continuous embedding of $H^2(D)$ in $C(\bar D)$ that
\begin{subequations}\label{pim}
\begin{align}
\|h_{v,1}\|_{H^2(\Omega_1)} & \le c \left( 1+\|v\|_{H^1(D)}^2+\|v\|_{H^2(D)} \right)\,\|h_1\|_{C^2(\bar D\times[-H-d,-H]\times[-H,c\kappa])} \nonumber \\
& \le c(\kappa) \label{pim1}
\end{align}
and 
\begin{align}
\|h_{v,2}\|_{H^2(\Omega_2(v))} & \le c(1+\|v\|_{H^1(D)}^2+\|v\|_{H^2(D)})\,\vert\Omega_2(v)\vert^{1/2}\, \|h_2\|_{C^2(\bar D\times[-H,c\kappa]\times[-H,c\kappa])} \nonumber\\
& \le c(\kappa)\,. \label{pim2}
\end{align}
\end{subequations}
Consequently, the right-hand side of \eqref{io} is bounded by $c(\kappa)$ and the estimate \eqref{est1} follows from \eqref{io} and Lemma~\ref{C1}. Arguing as in the proof of Lemma~\ref{L-1}, we also deduce from \eqref{bobbybrown4} and \eqref{pim} that 
\begin{equation*}
\|\sigma \partial_z h_v \|_{H^1(\Omega(v))} \le c(\kappa)\,,
\end{equation*}
while Lemma~\ref{L-1} and \eqref{est1} imply that 
\begin{equation*}
\|\sigma \partial_z \chi \|_{H^1(\Omega(v))} \le c(\kappa)\,.
\end{equation*} 
Recalling that $\psi_v=\chi+h_v$, these properties and \eqref{est1} readily give \eqref{est2}.
\end{proof}

\subsection{$\Gamma$-Convergence of the Dirichlet Energy}\label{GCDE}

We next aim at extending Proposition~\ref{P1}~{\bf (b)} to all $v\in \bar S$, such as the one depicted in Figure~\ref{Fig3}. 
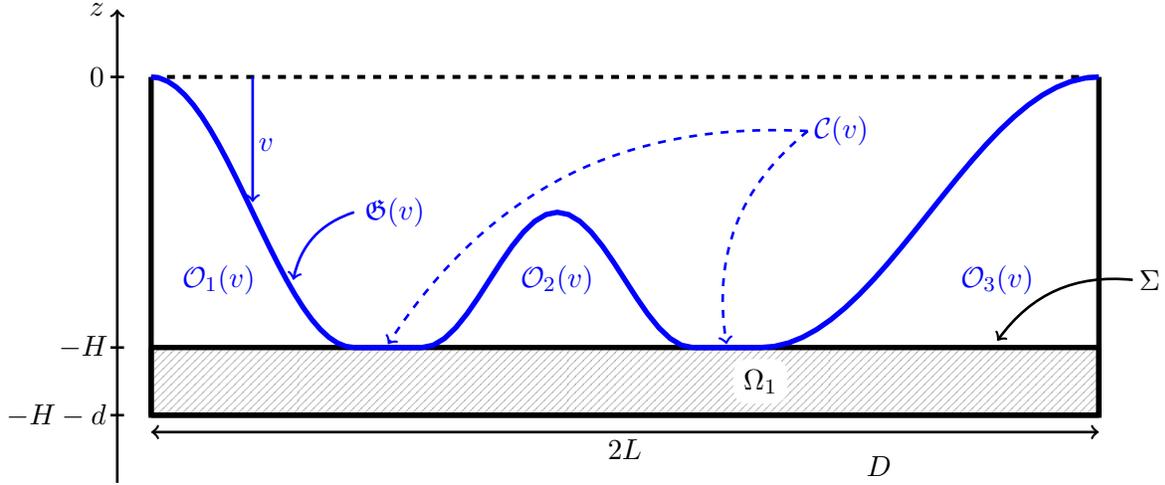
\begin{figure}
	\begin{tikzpicture}[scale=0.9]
	\draw[black, line width = 1.5pt, dashed] (-7,0)--(7,0);
	\draw[black, line width = 2pt] (-7,0)--(-7,-5);
	\draw[black, line width = 2pt] (7,-5)--(7,0);
	\draw[black, line width = 2pt] (-7,-5)--(7,-5);
	\draw[black, line width = 2pt] (-7,-4)--(7,-4);
	\draw[black, line width = 2pt, fill=gray, pattern = north east lines, fill opacity = 0.5] (-7,-4)--(-7,-5)--(7,-5)--(7,-4);
	\draw[blue, line width = 2pt] plot[domain=-7:-4] (\x,{-2-2*cos((pi*(\x+4)/3) r)});
	\draw[blue, line width = 2pt] (-4,-4)--(-3,-4);
	\draw[blue, line width = 2pt] plot[domain=-3:1] (\x,{-3+cos((pi*(\x+1)/2) r)});
	\draw[blue, line width = 2pt] (1,-4)--(2,-4);
	\draw[blue, line width = 2pt] plot[domain=2:7] (\x,{-2-2*cos((pi*(\x-2)/5) r)});
	\draw[blue, line width = 1pt, arrows=->] (-5.5,0)--(-5.5,-1.85);
	\node at (-5.3,-1) {${\color{blue} v}$};
	\node[draw,rectangle,white,fill=white, rounded corners=5pt] at (2,-4.5) {$\Omega_1$};
	\node at (2,-4.5) {$\Omega_1$};
	\node at (-6,-3) {${\color{blue} \mathcal{O}_1(v)}$};
	\node at (-1,-3) {${\color{blue} \mathcal{O}_2(v)}$};
	\node at (5.5,-3) {${\color{blue} \mathcal{O}_3(v)}$};
	\node at (3.75,-5.75) {$D$};
	\node at (7.75,-3) {$\Sigma$};
	\draw (7.5,-3) edge[->,bend right, line width = 1pt] (5.5,-3.9);
	\node at (-7.8,1) {$z$};
	\draw[black, line width = 1pt, arrows = ->] (-7.5,-6)--(-7.5,1);
	\node at (-8.4,-5) {$-H-d$};
	\draw[black, line width = 1pt] (-7.6,-5)--(-7.4,-5);
	\node at (-8,-4) {$-H$};
	\draw[black, line width = 1pt] (-7.6,-4)--(-7.4,-4);
	\node at (-7.8,0) {$0$};
	\draw[black, line width = 1pt] (-7.6,0)--(-7.4,0);
	\node at (0,-5.5) {$2L$};
	\draw[black, line width = 1pt, arrows = <->] (-7,-5.25)--(7,-5.25);
	\node at (3.2,-0.8) {${\color{blue} \mathcal{C}(v)}$};
	\draw (2.7,-0.8) edge[->,bend right,blue, dashed, line width = 1pt] (1.5,-3.95);
	\draw (2.7,-0.8) edge[->,bend right, blue, dashed, line width = 1pt] (-3.5,-3.95);
	\node at (-3.4,-2) {${\color{blue} \mathfrak{G}(v)}$};
	\draw (-4,-2) edge[->,bend right,blue, line width = 1pt] (-4.9,-3);
	\end{tikzpicture}
	\caption{Geometry of $\Omega(v)$ for a state $v\in \bar S$ with non-empty (and disconnected) coincidence set $\mathcal{C}(v)$.}\label{Fig3}
\end{figure}
For that purpose we show the $\Gamma$-convergence in $L_2$ of the functional $\mathcal{J}(v)$, defined in \eqref{sos}, with respect to $v$. More precisely,
fix $M>0$ and set 
$$
\Omega(M):=D\times (-H-d,2M)\,.
$$ 
For $v\in \bar S$ define
$$
G(v)[\theta]:=\left\{\begin{array}{ll} \dfrac{1}{2}\displaystyle\int_{ \Omega(v)} \sigma \vert\nabla (\theta+h_v)\vert^2\,\rd (x,z)\,, & \theta\in H_0^1(\Omega(v))\,,\\
\infty\,, & \theta\in L_2(\Omega(M))\setminus H_0^1(\Omega(v))\,.
\end{array}\right.
$$
Consider now $v\in \bar S$ and a sequence  $(v_n)_{n\ge 1}$ in $\bar S$ such that
\begin{equation}\label{o1}
v_n\rightarrow v \ \text{ in }\ H_0^1(D)\,,\qquad -H\le v, v_n\le M\,.
\end{equation}
Owing to the continuous embedding of $H_0^1(D)$ in $C(\bar D)$, a direct consequence of \eqref{o1} is that
\begin{equation}\label{o1b}
v_n\rightarrow v\quad\text{in }\ C(\bar D)\,.
\end{equation}

Let us first observe that, according to \eqref{bb} and \eqref{bobbybrown4}, both $h_{v_n}$ and $h_{v}$ belong to $H^1(\Omega(M))$. Moreover:

\begin{lemma}\label{L6}
Suppose \eqref{o1}. Then $h_{v_n} \rightarrow h_{v}$ in $H^1(\Omega(M))$ and
$$
\lim_{n\rightarrow\infty}\int_{\Omega(v_n)} \sigma \vert\nabla h_{v_n}\vert^2\,\rd (x,z)= \int_{\Omega(v)} \sigma \vert\nabla h_{v}\vert^2\,\rd (x,z)\,.
$$
\end{lemma}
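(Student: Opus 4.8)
The plan is to exploit the fact that the boundary function $h_v$ is in reality defined on the whole \emph{fixed} cylinder $\Omega(M)=D\times(-H-d,2M)$: by \eqref{bb} we may read $h_v:=h_1(\cdot,\cdot,v)$ on $\Omega_1$ and $h_v:=h_2(\cdot,\cdot,v)$ on the fixed rectangle $\Omega_1^+:=D\times(-H,2M)$, the two pieces sharing the same trace at $z=-H$ thanks to \eqref{bobbybrown2}, so that $h_v\in H^1(\Omega(M))$ exactly as in \eqref{pif}; the same holds for each $h_{v_n}$ since $v_n\in\bar S$ with $v_n\le M$. The variation of the domain $\Omega(v_n)$ will then only enter through a characteristic function, while the convergence $h_{v_n}\to h_v$ can be studied on the two fixed rectangles $\Omega_1$ and $\Omega_1^+$.

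First I would establish the convergence $h_{v_n}\to h_v$ in $H^1(\Omega(M))$. On $\Omega_1$ the chain rule gives, for $(x,z)\in\Omega_1$,
\begin{align*}
\partial_x h_{v_n,1}(x,z)&=\partial_x h_1(x,z,v_n(x))+\partial_w h_1(x,z,v_n(x))\,\partial_x v_n(x)\,,\\
\partial_z h_{v_n,1}(x,z)&=\partial_z h_1(x,z,v_n(x))\,,
\end{align*}
and similarly for $h_{v_n,2}$ on $\Omega_1^+$ with $h_2$ in place of $h_1$. Since $v_n\to v$ in $C(\bar D)$ by \eqref{o1b} and $h_1,h_2\in C^2$, every term of the form $\partial_\bullet h_j(\cdot,\cdot,v_n)$ converges uniformly on the relevant rectangle to $\partial_\bullet h_j(\cdot,\cdot,v)$ and stays bounded. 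The only factor that does not converge uniformly is $\partial_x v_n$, which however converges to $\partial_x v$ in $L_2(D)$ by \eqref{o1}; writing $\partial_w h_1(\cdot,\cdot,v_n)\,\partial_x v_n-\partial_w h_1(\cdot,\cdot,v)\,\partial_x v$ as $(\partial_w h_1(\cdot,\cdot,v_n)-\partial_w h_1(\cdot,\cdot,v))\,\partial_x v_n+\partial_w h_1(\cdot,\cdot,v)\,(\partial_x v_n-\partial_x v)$ and estimating the first summand by the product of an $L_\infty$-null sequence with the sequence $(\partial_x v_n)$ bounded in $L_2$, and the second by an $L_\infty$-bound times the $L_2$-null sequence $(\partial_x v_n-\partial_x v)$, yields convergence in $L_2$. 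Collecting these facts gives $h_{v_n,1}\to h_{v,1}$ in $H^1(\Omega_1)$ and $h_{v_n,2}\to h_{v,2}$ in $H^1(\Omega_1^+)$, hence $h_{v_n}\to h_v$ in $H^1(\Omega(M))$, which is the first claim.

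For the energy convergence I would recast both integrals over the fixed cylinder $\Omega(M)$ by inserting the characteristic functions $\mathbf 1_{\Omega(v_n)}$ and $\mathbf 1_{\Omega(v)}$, so that it suffices to prove $\mathbf 1_{\Omega(v_n)}\,\sigma|\nabla h_{v_n}|^2\to \mathbf 1_{\Omega(v)}\,\sigma|\nabla h_{v}|^2$ in $L_1(\Omega(M))$. Splitting the difference as
$$
\mathbf 1_{\Omega(v_n)}\,\sigma\big(|\nabla h_{v_n}|^2-|\nabla h_v|^2\big)+\big(\mathbf 1_{\Omega(v_n)}-\mathbf 1_{\Omega(v)}\big)\,\sigma|\nabla h_v|^2\,,
$$
the first term is controlled in $L_1$ by $\sigma_{max}\,\big\||\nabla h_{v_n}|^2-|\nabla h_v|^2\big\|_{L_1(\Omega(M))}$, which tends to $0$ because $\nabla h_{v_n}\to\nabla h_v$ in $L_2(\Omega(M))$ (so that $|\nabla h_{v_n}|^2\to|\nabla h_v|^2$ in $L_1$ via $\big\||\nabla h_{v_n}|^2-|\nabla h_v|^2\big\|_{L_1}\le\|\nabla h_{v_n}-\nabla h_v\|_{L_2}\,\|\nabla h_{v_n}+\nabla h_v\|_{L_2}$). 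For the second term, the uniform convergence \eqref{o1b} implies $\mathbf 1_{\Omega(v_n)}\to\mathbf 1_{\Omega(v)}$ pointwise off the graph $\mathfrak G(v)$, which has zero Lebesgue measure since $v\in C(\bar D)$; as $\big|(\mathbf 1_{\Omega(v_n)}-\mathbf 1_{\Omega(v)})\,\sigma|\nabla h_v|^2\big|\le \sigma|\nabla h_v|^2\in L_1(\Omega(M))$, dominated convergence forces convergence to $0$ in $L_1$. Integrating over $\Omega(M)$ then gives the second claim.

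The main obstacle is not any single estimate but the simultaneous variation of the integrand and of the integration domain in the second statement. The device that unlocks it is that $h_{v,2}$ is prescribed on the whole slab $D\times[-H,\infty)$ rather than only on $\Omega_2(v)$, which lets one transfer everything to the fixed cylinder $\Omega(M)$ and isolate the entire domain dependence into the characteristic function; the measure-zero property of the graph $\mathfrak G(v)$ then makes that characteristic function converge almost everywhere, and dominated convergence closes the argument.
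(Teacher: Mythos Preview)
Your proof is correct and follows essentially the same approach as the paper's: both arguments extend $h_v$ to the fixed cylinder $\Omega(M)$, use the chain rule together with the uniform convergence $v_n\to v$ in $C(\bar D)$ and the $L_2(D)$-convergence of $\partial_x v_n$ to obtain $h_{v_n}\to h_v$ in $H^1(\Omega(M))$, and then pass from $|\nabla h_{v_n}|^2\to|\nabla h_v|^2$ in $L_1$ to the desired limit using the pointwise a.e.\ convergence of the characteristic functions. The only cosmetic difference is that the paper invokes \cite[Proposition~2.61]{FoLe07} for the last step, whereas you spell out the underlying splitting and dominated-convergence argument explicitly.
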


\begin{proof}
Recall that $h_{v} (x,z)=h(x,z,v(x))$ for $(x,z)\in \Omega(M)$, so that
$$
\nabla h _{v}(x,z)=\big(\partial_x h(x,z,v(x)) + \partial_x v(x)\partial_wh(x,z,v(x)) , \partial_zh(x,z,v(x)) \big)\,,
$$
hence  $h_{v_n}, h_v\in H^1 (\Omega(M))$. Owing to \eqref{o1b} and the regularity of $h$ we obtain
$$
\lim_{n\rightarrow\infty}\sup_{(x,z)\in \overline{\Omega(M)}}\big\vert \left(\partial_xh,\partial_zh,\partial_x v_n \partial_wh\right)(x,z,v_n(x))-\left(\partial_xh,\partial_zh,\partial_x v \partial_wh\right)(x,z,v(x))\big\vert =0\,.
$$
Together with \eqref{o1}, this implies $h_{v_n}\rightarrow h_v$ in $H^1(\Omega(M))$. In particular, $\vert\nabla h_{v_n}\vert^2\rightarrow \vert\nabla h_v\vert^2$ in $L_1(\Omega(M))$. Since $\sigma$ is bounded and $\sigma{\bf 1}_{\Omega(v_n)}\rightarrow \sigma{\bf 1}_{\Omega(v)}$ pointwise, the last property stated in Lemma~\ref{L6} now follows from \cite[Proposition~2.61]{FoLe07}.
\end{proof}

Next, we show that the functional $G(v)$ is the $\Gamma$-limit of the sequence $(G(v_n))_{n\ge 1}$.

\begin{proposition}\label{P3}
Suppose \eqref{o1}. Then
$$
\Gamma-\lim_{n\rightarrow\infty} G(v_n)=G(v)\quad\text{in }\ L_2(\Omega(M))\,.
$$
\end{proposition}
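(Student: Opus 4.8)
The plan is to verify directly the two defining inequalities of $\Gamma$-convergence in $L_2(\Omega(M))$, exploiting two structural facts throughout. First, the weight $\sigma$ is a \emph{fixed} function on $\Omega(M)$ (equal to $\sigma_1$ on $\Omega_1$ and to the constant $\sigma_2$ for $z>-H$), bounded between $\sigma_{min}$ and $\sigma_{max}$, so that the only $n$-dependence in the integrands comes from the domain $\Omega(v_n)$ and from $h_{v_n}$. Second, the boundary data converge strongly, $h_{v_n}\to h_v$ in $H^1(\Omega(M))$ together with the energy convergence of Lemma~\ref{L6}. I identify each $H_0^1(\Omega(w))$ with its image in $L_2(\Omega(M))$ under extension by zero, so that the distributional gradient $\nabla\theta$ of such a $\theta$ is supported in $\overline{\Omega(w)}$.

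For the recovery ($\Gamma$-$\limsup$) inequality I would first treat $\theta\in C_c^\infty(\Omega(v))$. Since $v_n\to v$ uniformly by \eqref{o1b}, the compact support of $\theta$ lies in $\Omega(v_n)$ for all large $n$, so the eventually constant sequence $\theta_n:=\theta$ is admissible, $\theta_n\in H_0^1(\Omega(v_n))$. Expanding $|\nabla(\theta+h_{v_n})|^2$ and using that $\nabla\theta$ is supported in a fixed compact set $K\subset\Omega(v)\cap\Omega(v_n)$, the $|\nabla\theta|^2$ contribution is independent of $n$, the cross term converges because $\nabla h_{v_n}\to\nabla h_v$ in $L_2$ (Lemma~\ref{L6}), and the pure $h_{v_n}$ term converges by the second assertion of Lemma~\ref{L6}; hence $G(v_n)[\theta]\to G(v)[\theta]$. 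For general $\theta\in H_0^1(\Omega(v))$ I would pass to the limit along $C_c^\infty(\Omega(v))$ approximations: the $\Gamma$-$\limsup$ functional is $L_2$-lower semicontinuous and $G(v)$ is continuous on $H_0^1(\Omega(v))$, so a standard density/diagonal argument yields the recovery inequality for every $\theta$; when $\theta\notin H_0^1(\Omega(v))$ the right-hand side is $+\infty$ and nothing is to prove.

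For the $\Gamma$-$\liminf$ inequality, take any $\theta_n\to\theta$ in $L_2(\Omega(M))$ with $\liminf_n G(v_n)[\theta_n]=:\ell<\infty$, and pass to a subsequence realizing $\ell$ as a finite limit with $\theta_n\in H_0^1(\Omega(v_n))$. The lower bound $\sigma\ge\sigma_{min}$ together with the $L_2$-boundedness of $\nabla h_{v_n}$ (Lemma~\ref{L6}) bounds $\nabla\theta_n$ in $L_2$; since the zero-extensions in fact lie in $H_0^1(\Omega(M))$, they are bounded in $H^1(\Omega(M))$, and a further subsequence converges weakly to $\theta\in H_0^1(\Omega(M))$. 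Uniform convergence $v_n\to v$ forces $\theta=0$ a.e.\ on $\{z>v(x)\}$, hence $\nabla\theta$ is supported in $\overline{\Omega(v)}$ and $\mathbf{1}_{\Omega(v_n)}\to\mathbf{1}_{\Omega(v)}$ a.e. Writing $2G(v_n)[\theta_n]$ as the sum of the three quadratic terms, I would pass to the limit via: weak $L_2$-lower semicontinuity of $\xi\mapsto\int_{\Omega(M)}\sigma|\xi|^2$ for $\nabla\theta_n\rightharpoonup\nabla\theta$; weak--strong convergence for the cross term, using that $\mathbf{1}_{\Omega(v_n)}\nabla h_{v_n}\to\mathbf{1}_{\Omega(v)}\nabla h_v$ strongly in $L_2$; and Lemma~\ref{L6} for the $h_{v_n}$ term. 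This gives $2\ell\ge\int_{\Omega(v)}\sigma|\nabla(\theta+h_v)|^2=2G(v)[\theta]$.

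The delicate step, which I expect to be the main obstacle, is the membership $\theta|_{\Omega(v)}\in H_0^1(\Omega(v))$ needed to make $G(v)[\theta]$ finite in the $\liminf$ argument, since $\Omega(v)$ need not be Lipschitz when $\mathcal{C}(v)\ne\emptyset$. The information at hand is that $\theta\in H_0^1(\Omega(M))$ vanishes a.e.\ above the graph of $v$. On every Lipschitz portion of $\partial\Omega(v)$ the trace of $\theta$ vanishes: on the bottom and the lateral sides because $\theta\in H_0^1(\Omega(M))$, and on the graph $\mathfrak{G}(v)$ (and on any horizontal interface segment lying over $\mathcal{C}(v)$) because $\theta$ agrees across with the identically zero $H^1(\Omega(M))$-function. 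The only remaining obstructions are the isolated cuspidal/turning points $(x_0,-H)$ with $v(x_0)+H=\partial_x v(x_0)=0$; these form a set of zero $H^1$-capacity in $\R^2$ and are therefore removable, so that the zero-extension characterization of $H_0^1$ for the subgraph domain $\Omega(v)$ (enclosed between Lipschitz graphs, as recalled in Remark~\ref{ECwasHere}) yields $\theta|_{\Omega(v)}\in H_0^1(\Omega(v))$. I would isolate this removability claim as a separate lemma rather than absorb it into the $\Gamma$-convergence proof.
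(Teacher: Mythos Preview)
Your argument is essentially correct and follows a more elementary route than the paper's, but you have conflated $\Omega(v)$ with $\Omega_2(v)$ in the ``delicate step.'' The domain $\Omega(v)$ is \emph{always} Lipschitz, regardless of whether $\mathcal{C}(v)$ is empty: its boundary consists of the flat bottom, the two vertical lateral sides, and the graph of the Lipschitz function $v\in H^2(D)\hookrightarrow W_\infty^1(D)$, meeting at non-degenerate corners. It is only the subregion $\Omega_2(v)$ that develops cusps (see Remark~\ref{ECwasHere}). Consequently, once you know $\theta\in H_0^1(\Omega(M))$ with $\theta=0$ a.e.\ on $\Omega(M)\setminus\Omega(v)$, the membership $\theta\in H_0^1(\Omega(v))$ follows from the standard characterization on Lipschitz domains, and your capacity/removability lemma is unnecessary.

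With that correction, here is the comparison. For the recovery sequence, the paper solves $-\Delta\theta_n=-\Delta\theta$ in $\Omega(v_n)$ and appeals to \v{S}ver\'ak's theorem on stability of Dirichlet problems under Hausdorff convergence (with uniformly connected complements) to obtain $\theta_n\to\theta$ strongly in $H_0^1(\Omega(M))$; you instead use the eventually constant sequence for $\theta\in C_c^\infty(\Omega(v))$ and close by density, which is more elementary and avoids the domain-convergence machinery. For the $\liminf$ inequality, the paper establishes $\theta=0$ on $\mathfrak{G}(v)$ by a direct trace estimate comparing $(\tilde\theta_n+h_{v_n})(x,v_n(x))$ and $(\tilde\theta_n+h_{v_n})(x,v(x))$ through the fundamental theorem of calculus, while you rely on the ``zero-extension'' characterization of $H_0^1$ on Lipschitz domains. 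Both work; the paper's trace computation is self-contained, whereas your route is shorter once the Lipschitz regularity of $\Omega(v)$ is recognized.
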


\begin{proof}
\noindent\textbf{Step~1.} We begin with the asymptotic lower semicontinuity. Considering an arbitrary sequence $(\theta_n)_{n\ge 1}$ in $L_2(\Omega(M))$ and $\theta\in L_2(\Omega(M))$ such that
\begin{equation}\label{t0}
\theta_n\rightarrow \theta \ \text{ in }\ L_2(\Omega(M))\,,
\end{equation}
 we have to show that
\begin{equation}\label{G1}
G(v)[\theta]\le\liminf_{n\rightarrow\infty} G(v_n)[\theta_n]\,.
\end{equation}
We may assume that $\theta_n\in H_0^1(\Omega(v_n))$ for all $n\ge 1$ and that $(G(v_n)[\theta_n])_{n\ge 1}$ is bounded, since \eqref{G1} is clearly satisfied otherwise. In that case, if $\tilde\theta_n$ denotes the extension of $\theta_n$ by zero in $\Omega(M)\setminus\Omega(v_n)$, then it follows from \eqref{sigmamin} and Lemma~\ref{L6} that $(\tilde\theta_n)_{n\ge 1}$ is bounded in $H_0^1(\Omega(M))$ and thus
\begin{equation}\label{JJCale}
(\tilde\theta_n)_{n\ge 1}\ \text{ is weakly relatively compact in }\ H_0^1(\Omega(M))\,.
\end{equation}
Introducing $\tilde\theta:=\theta\mathbf{1}_{\Omega(v)}$ and noticing that 
\begin{equation*}
\begin{split}
\int_{\Omega(M)}\vert \tilde\theta_n-\tilde\theta\vert^2\,\rd (x,z)= &
\int_{\Omega(v_n)\cap \Omega(v)}\vert \theta_n-\theta\vert^2\,\rd (x,z) +\int_{\Omega(v_n)\cap(\Omega(M)\setminus\Omega(v))}\vert \theta_n\vert^2\,\rd (x,z)\\
& +\int_{(\Omega(M)\setminus\Omega(v_n))\cap\Omega(v)}\vert \theta\vert^2\,\rd (x,z)\\
\le &  \int_{\Omega(v_n)\cap \Omega(v)}\vert \theta_n-\theta\vert^2\,\rd (x,z) 
+2\int_{\Omega(v_n)\cap (\Omega(M)\setminus\Omega(v))}\vert \theta_n-\theta\vert^2\,\rd (x,z)\\
& +2\int_{\Omega(v_n)\cap(\Omega(M)\setminus\Omega(v))}\vert \theta\vert^2\,\rd (x,z)
+\int_{(\Omega(M)\setminus\Omega(v_n))\cap\Omega(v)}\vert \theta\vert^2\,\rd (x,z)\,,
\end{split}
\end{equation*}
we infer from \eqref{o1b}, \eqref{t0}, and Lebesgue's theorem that the right-hand side of the above inequality converges to zero as $n\rightarrow \infty$. Consequently, $(\tilde\theta_n)_{n\ge 1}$ converges to $\tilde\theta$ in $L_2(\Omega(M))$, which implies, together with \eqref{JJCale}, that $\tilde\theta\in H_0^1(\Omega(M))$ and
\begin{equation}\label{hh}
\tilde\theta_n \rightharpoonup \tilde\theta \ \text{ in }\ H^1(\Omega(M))\,,\qquad \tilde\theta_n \rightarrow \theta \ \text{ in }\ H^{3/4}(\Omega(v))\,.
\end{equation}
In particular, using Lemma~\ref{L6} and the continuity of the trace,
\begin{equation}\label{o2}
\tilde\theta_n+h_{v_n}\rightarrow \theta +h_{v}\quad\text{ in }\ L_2(\partial\Omega(v))\,.
\end{equation}
It remains to check that $\theta\in H_0^1(\Omega(v))$ for which we only have to show that $\theta$ vanishes (in the sense of traces) on the upper part $\mathfrak{G}(v)$ of the boundary $\partial\Omega(v)$, since $\theta=\tilde\theta$ vanishes on the other boundary parts of $\Omega(v)$. 
Since $\tilde\theta_n\in H_0^1(\Omega(v_n))$, it follows from H\"older's inequality that
\begin{equation*}
\begin{split}
&\left\vert h_{v_n,2}(x,v_n(x))-(\tilde\theta_n+h_{v_n,2})(x,v(x))\right\vert= \left\vert(\tilde\theta_n+h_{v_n,2})(x,v_n(x))-(\tilde\theta_n+h_{v_n,2})(x,v(x))\right\vert\\
&\qquad=\left\vert\int_{v(x)}^{v_n(x)}\partial_z (\tilde\theta_n+h_{v_n,2})(x,z)\,\rd z\right\vert\\
&\qquad\le \vert v_n(x)-v(x)\vert^{1/2}\left(\int_{-H}^M\vert\partial_z (\tilde\theta_n+ h_{v_n,2}) (x,z)\vert^2\,\rd z\right)^{1/2}
\end{split}
\end{equation*}
for almost every $x\in D$. Thus, by \eqref{sigmamin},
\begin{equation*}
\begin{split}
\int_D\vert h_{v_n,2}(x,v_n(x))-&(\tilde\theta_n+h_{v_n,2})(x,v(x))\vert^2\,\rd x\\
&\le\int_D \vert v_n(x)-v(x)\vert \int_{-H}^{M}\vert\partial_z (\tilde\theta_n+h_{v_n,2})(x,z)\vert^2\,\rd z\rd x\\
&\le \frac{\|v_n-v\|_{L_\infty(D)}}{\sigma_{min}} \int_{\Omega(M)}\sigma\vert\nabla (\tilde\theta_n+h_{v_n})(x,z)\vert^2\,\rd (x,z)\\
&= 2 \frac{\|v_n-v\|_{L_\infty(D)}}{\sigma_{min}}\, G(v_n)[\theta_n]\,.
\end{split}
\end{equation*}
Since $(G(v_n)[\theta_n])_{n\ge 1}$ is bounded and $v_n\rightarrow v$ in $C(\bar D)$ by \eqref{o1b}, the right-hand side of the above inequality converges to zero. Hence, due to \eqref{o2} and $h_{v_n,2}(\cdot,v_n)\rightarrow h_{v,2}(\cdot,v)$ in $C(\bar D)$, we conclude that indeed $\theta=0$ on $\mathfrak{G}(v)$. Therefore, $\theta\in H_0^1(\Omega(v))$. 
Now, by \eqref{hh} and Lemma~\ref{L6},
$$
\tilde\theta_n+h_{v_n} \rightharpoonup\tilde\theta+h_{v}\quad\text{in}\quad H_0^1(\Omega(M)) \,,
$$
so that
\begin{equation}\label{p1}
\int_{\Omega(M)}\sigma \vert \nabla(\tilde\theta+h_v)\vert^2\,\rd (x,z) \le
\liminf_{n\rightarrow\infty} \int_{\Omega(M)}\sigma \vert \nabla(\tilde\theta_n+h_{v_n})\vert^2\,\rd (x,z)\,.
\end{equation}
Since $\tilde\theta_n\in H_0^1(\Omega(v_n))$, 
$$
\int_{\Omega(M)\setminus\Omega(v_n)}\sigma \vert \nabla(\tilde\theta_n+h_{v_n})\vert^2\,\rd (x,z)=\int_{\Omega(M)\setminus\Omega(v_n)}\sigma \vert \nabla h_{v_n}\vert^2\,\rd (x,z)
$$
and we thus deduce from Lemma~\ref{L6}
\begin{equation}
\begin{split}\label{p2}
\lim_{n\rightarrow\infty } \int_{\Omega(M)\setminus\Omega(v_n)}\sigma \vert \nabla(\tilde\theta_n+h_{v_n})\vert^2\,\rd (x,z)&=
\int_{\Omega(M)\setminus\Omega(v)}\sigma \vert \nabla h_{v}\vert^2\,\rd (x,z)\\
&
= \int_{\Omega(M)\setminus\Omega(v)}\sigma \vert \nabla (\tilde\theta+h_{v})\vert^2\,\rd (x,z)\,,
\end{split}
\end{equation}
the last equality being due to $\tilde\theta\in H_0^1(\Omega(v))$.
Combining  \eqref{p1} and \eqref{p2} implies \eqref{G1}.

\smallskip

\noindent\textbf{Step~2.} We prove the existence of a recovery sequence. By definition of the functional $G(v)$ we only need to consider $\theta\in H_0^1(\Omega(v))$. Then $\theta\in H_0^1(\Omega(M))$ and $f:=-\Delta \theta\in H^{-1}(\Omega(M))$ can be considered also as an element of $H^{-1}(\Omega(v_n))$ by restriction. Let now $\theta_n\in H_0^1(\Omega(v_n))$ denote the unique weak solution to
$$
-\Delta \theta_n=f \quad\text{in }\ \Omega(v_n)\,,\qquad \theta_n=0 \quad\text{on }\ \partial\Omega(v_n)\,.
$$
Since the Hausdorff distance $d_H$ in $\Omega(M)$ (see \cite[Section~2.2.3]{HP05}) satisfies
$$
d_H(\Omega(v_n),\Omega(v)) \le \|v_n - v \|_{L_\infty(D)}\rightarrow 0
$$
by \eqref{o1b} and since $\overline{\Omega(M)}\setminus\Omega(v_n)$ has a single connected component for all $n\ge 1$ as $v_n>-H$, it follows from \cite[Theorem~4.1]{Sv93} and \cite[Theorem~3.2.5]{HP05} that 
$\theta_n\rightarrow \hat\theta$ in $H_0^1(\Omega(M))$,
where $\hat\theta\in H_0^1(\Omega(M))$ is the unique weak solution to
$$
-\Delta \hat\theta=f \quad\text{in }\ \Omega(M)\,,\qquad \hat\theta=0 \quad\text{on }\ \partial\Omega(M)\,.
$$
Clearly, $\hat\theta=\theta$ by uniqueness, so that $\theta_n\rightarrow \theta$ in $H_0^1(\Omega(M))$. Since $\theta_n\in H_0^1(\Omega(v_n))$ and $\theta\in H_0^1(\Omega(v))$, this convergence yields, with the help of  Lemma~\ref{L6},
\begin{equation*}
\begin{split}
\int_{\Omega(v)}\sigma \vert \nabla(\theta+h_v)\vert^2\,\rd (x,z)
&=
\int_{\Omega(v)}\sigma \left(\vert \nabla\theta\vert^2+2\nabla\theta\cdot \nabla h_v+\vert \nabla h_v\vert^2\right)\,\rd (x,z)\\
& = \lim_{n\rightarrow\infty} \int_{\Omega(M)}\sigma \left(\vert \nabla\theta_n\vert^2+2\nabla\theta_n\cdot \nabla h_{v_n}\right)\,\rd (x,z)\\
&\qquad +
\lim_{n\rightarrow\infty} \int_{\Omega(v_n)}\sigma \vert \nabla h_{v_n}\vert^2\,\rd (x,z)\\
&= \lim_{n\rightarrow\infty}\int_{\Omega(v_n)}\sigma \vert \nabla(\theta_n +h_{v_n})\vert^2\,\rd (x,z)\,;
\end{split}
\end{equation*}
that is,
$$
G(v)[\theta]=\lim_{n\rightarrow\infty} G(v_n)[\theta_n]\,.
$$
Combining the outcome of \textbf{Step~1} and \textbf{Step~2} implies the $\Gamma$-convergence of $(G(v_n))_{n\ge 1}$ to $G(v)$ in $L_2(\Omega(M))$.
\end{proof}

For the Dirichlet energy \eqref{DE}, which is given by
$$
\mathfrak{J}(v)=\mathcal{J}(v)[\psi_v]=\frac{1}{2}\int_{\Omega(v)} \sigma \vert\nabla \psi_v\vert^2\,\rd (x,z)\,,\quad v\in \bar S\,,
$$
with $\psi_v$ denoting the potential from Proposition~\ref{P1}, we now obtain:

\begin{corollary}\label{C3}
Suppose \eqref{o1}. Then
$$
\lim_{n\to\infty} \left\| (\psi_{v_n}-h_{v_n}) - (\psi_v - h_v) \right\|_{H_0^1(\Omega(M))} = 0
$$
and
$$
\lim_{n\to\infty} \mathfrak{J}(v_n) = \mathfrak{J}(v)\,.
$$
\end{corollary}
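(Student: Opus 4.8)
The plan is to read Corollary~\ref{C3} as a statement about the minimizers and the minimal values of the functionals $G(v_n)$ and $G(v)$, and to feed it the $\Gamma$-convergence proved in Proposition~\ref{P3}. Writing any admissible potential $\vartheta\in\mathcal{A}(v)$ as $\vartheta=\theta+h_v$ with $\theta\in H_0^1(\Omega(v))$, one has $\mathcal{J}(v)[\vartheta]=G(v)[\theta]$ by \eqref{sos}; hence, identifying an element of $H_0^1(\Omega(v))$ with its extension by zero to $\Omega(M)$, the function $\psi_v-h_v$ (whose extension I denote $\tilde\chi$) is precisely the unique minimizer of $G(v)$ over $L_2(\Omega(M))$, with $G(v)[\tilde\chi]=\mathcal{J}(v)[\psi_v]=\mathfrak{J}(v)$; likewise with $v$ replaced by $v_n$, writing $\tilde\chi_n$ for the extension of $\psi_{v_n}-h_{v_n}$. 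The two assertions of the corollary thus amount to the convergence of the minimal values and to the strengthening, from $L_2(\Omega(M))$ to $H_0^1(\Omega(M))$, of the convergence of the minimizers for the $\Gamma$-converging family $(G(v_n))_{n\ge1}$.

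First I would check equi-coercivity. By Lemma~\ref{C1} and the triangle inequality for the seminorm $\theta\mapsto(\int_{\Omega(v_n)}\sigma|\nabla\theta|^2\,\rd(x,z))^{1/2}$, one has $(\int_{\Omega(v_n)}\sigma|\nabla(\psi_{v_n}-h_{v_n})|^2\,\rd(x,z))^{1/2}\le 2(\int_{\Omega(v_n)}\sigma|\nabla h_{v_n}|^2\,\rd(x,z))^{1/2}$, and the right-hand side is bounded uniformly in $n$ by Lemma~\ref{L6}. Combined with the lower bound $\sigma\ge\sigma_{min}>0$ from \eqref{sigmamin} and Poincaré's inequality (the extensions by zero vanish on $\partial\Omega(M)$), this shows that $(\tilde\chi_n)_{n\ge1}$ is bounded in $H_0^1(\Omega(M))$ and hence relatively compact in $L_2(\Omega(M))$ by Rellich's theorem.

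Next I would invoke the fundamental theorem of $\Gamma$-convergence. The recovery sequence produced in \textbf{Step~2} of Proposition~\ref{P3} for $\theta=\psi_v-h_v$ gives $\limsup_n\mathfrak{J}(v_n)\le\mathfrak{J}(v)$, while the lower semicontinuity of \textbf{Step~1}, applied along any $L_2$-convergent subsequence of $(\tilde\chi_n)$, yields $\mathfrak{J}(v)=\min G(v)\le\liminf_n\mathfrak{J}(v_n)$; together these prove $\mathfrak{J}(v_n)\to\mathfrak{J}(v)$, the second assertion. The same reasoning shows that every $L_2(\Omega(M))$-cluster point of $(\tilde\chi_n)$ is a minimizer of $G(v)$; since $\theta\mapsto\int_{\Omega(v)}\sigma|\nabla(\theta+h_v)|^2\,\rd(x,z)$ is strictly convex on $H_0^1(\Omega(v))$, that minimizer is unique and equals $\tilde\chi$, so the whole sequence satisfies $\tilde\chi_n\to\tilde\chi$ in $L_2(\Omega(M))$ and, by the uniform $H^1$-bound, $\tilde\chi_n\rightharpoonup\tilde\chi$ weakly in $H_0^1(\Omega(M))$.

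The main obstacle is the remaining upgrade from $L_2$- to $H_0^1(\Omega(M))$-convergence, which is delicate because both the domains $\Omega(v_n)$ and the data $h_{v_n}$ vary with $n$. I would obtain it from convergence of the $\sigma$-weighted gradient norms. Since $\tilde\chi_n+h_{v_n}$ equals $\psi_{v_n}$ on $\Omega(v_n)$ and $h_{v_n}$ on $\Omega(M)\setminus\Omega(v_n)$, one has $\int_{\Omega(M)}\sigma|\nabla(\tilde\chi_n+h_{v_n})|^2\,\rd(x,z)=\int_{\Omega(v_n)}\sigma|\nabla\psi_{v_n}|^2\,\rd(x,z)+\int_{\Omega(M)\setminus\Omega(v_n)}\sigma|\nabla h_{v_n}|^2\,\rd(x,z)$, where the first integral converges to $\int_{\Omega(v)}\sigma|\nabla\psi_v|^2\,\rd(x,z)$ by the already-established convergence $\mathfrak{J}(v_n)\to\mathfrak{J}(v)$ and the second to $\int_{\Omega(M)\setminus\Omega(v)}\sigma|\nabla h_v|^2\,\rd(x,z)$ by Lemma~\ref{L6}. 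Expanding the square and passing to the limit in the purely-$h_{v_n}$ term (again Lemma~\ref{L6}) and in the mixed term (weak $L_2$-convergence of $\nabla\tilde\chi_n$ tested against the strongly convergent $\sigma\nabla h_{v_n}$) then yields $\int_{\Omega(M)}\sigma|\nabla\tilde\chi_n|^2\,\rd(x,z)\to\int_{\Omega(M)}\sigma|\nabla\tilde\chi|^2\,\rd(x,z)$. Because \eqref{sigmamin} and Poincaré's inequality make $\theta\mapsto(\int_{\Omega(M)}\sigma|\nabla\theta|^2\,\rd(x,z))^{1/2}$ a Hilbert norm on $H_0^1(\Omega(M))$ equivalent to the usual one, the weak convergence $\tilde\chi_n\rightharpoonup\tilde\chi$ together with this convergence of norms forces $\tilde\chi_n\to\tilde\chi$ strongly in $H_0^1(\Omega(M))$, which is exactly the first assertion.
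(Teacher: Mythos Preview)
Your proof is correct and follows essentially the same route as the paper: equi-coercivity from Lemma~\ref{C1} and Lemma~\ref{L6}, the fundamental theorem of $\Gamma$-convergence applied to Proposition~\ref{P3} to obtain $\mathfrak{J}(v_n)\to\mathfrak{J}(v)$ together with $L_2$-convergence and weak $H_0^1$-convergence of the (unique) minimizers, and finally the upgrade to strong $H_0^1$-convergence via convergence of norms. The paper states the norm-convergence step tersely (``we infer from the convergence of $(\mathfrak{J}(v_n))_{n\ge1}$ to $\mathfrak{J}(v)$ and Lemma~\ref{L6} that $\|\chi_n\|_{H_0^1(\Omega(M))}\to\|\chi\|_{H_0^1(\Omega(M))}$''), whereas you spell out the expansion in the $\sigma$-weighted inner product and the weak--strong pairing in the mixed term; this is a presentational difference, not a substantive one.
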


\begin{proof}
For $n\ge 1$, set 
$$
\chi_n:=\psi_{v_n}-h_{v_n}\in H_0^1(\Omega(v_n))\subset H_0^1(\Omega(M))\,,
$$ 
and recall that $\chi_n$ is a minimizer of $G(v_n)$ in $H_0^1(\Omega(v_n))$ by Proposition~\ref{P1}~\textbf{(a)}. Since  $(v_n)_{n\ge 1}$ is bounded in $H^1(D)$, it follows from \eqref{sigmamin}, Lemma~\ref{C1}, and Lemma~\ref{L6} that $(\chi_n)_{n\ge 1}$ is bounded in $H_0^1(\Omega(M))$. Hence, there are a subsequence $(n_j)_{j\ge 1}$ and $\chi\in H_0^1(\Omega(M))$ such that $\chi_{n_j}\rightarrow \chi$ in $L_2(\Omega(M))$ and $\chi_{n_j}\rightharpoonup \chi$ in $H_0^1(\Omega(M))$. By Proposition~\ref{P3} and the fundamental theorem of $\Gamma$-convergence, see \cite[Corollary~7.20]{DaM93}, $\chi$ is a minimizer of the functional $G(v)$ on $L_2(\Omega(M))$. Clearly, from the definition of $G(v)$ we see that $\chi+h_v\in \mathcal{A}(v)$ minimizes the functional $\mathcal{J}(v)$ on $\mathcal{A}(v)$, hence $\psi_v=\chi+h_v$ owing to  Proposition~\ref{P1}~\textbf{(a)}. The sequence $(\chi_n)_{n\ge 1}$ then has a unique cluster point in $L_2(\Omega(M))$ and is compact in that space and weakly compact in $H_0^1(\Omega(M))$. From this, we deduce that $\chi_{n}\rightarrow \chi$ in $L_2(\Omega(M))$ and $\chi_{n}\rightharpoonup \chi$ in $H_0^1(\Omega(M))$. Moreover, the fundamental theorem of $\Gamma$-convergence \cite[Corollary~7.20]{DaM93} also ensures 
$G(v_n)[\chi_n]\rightarrow G(v)[\chi]$; that is, $\mathfrak{J}(v_n)\rightarrow \mathfrak{J}(v)$  as $n\to\infty$. 

It remains to show the strong convergence of $(\chi_n)_{n\ge 1}$ in $H_0^1(\Omega(M))$. To this end, we infer from the convergence of $\left( \mathfrak{J}(v_n) \right)_{n\ge1}$ to $\mathfrak{J}(v)$ and Lemma~\ref{L6} that 
$$
\lim_{n\rightarrow\infty} \|\chi_n\|_{H_0^1(\Omega(M))}= \|\chi\|_{H_0^1(\Omega(M))}\,.
$$
Together with the already established weak convergence of $(\chi_n)_{n\ge 1}$ to $\chi$ in $H_0^1(\Omega(M))$, this gives the strong convergence.
\end{proof}

\subsection{$H^2$-Estimate for the Potential $\psi_v$}\label{HEPP}

Owing to the $H^2$-estimates on $\Omega_1\cup \Omega_2(v)$ derived in Proposition~\ref{P2}, we are able to improve Corollary~\ref{C3} to stronger topologies. 

\begin{proposition}\label{ACDC}
Consider $\kappa>0$, $v\in\bar S$, and a sequence $(v_n)_{n\ge 1}$ satisfying
\begin{equation}\label{Z1}
v_n\in S\cap W_\infty^2(D) \qquad \text{ with }\qquad  \|v_n\|_{H^2(D)}\le \kappa\,,
\end{equation}
and
\begin{equation}\label{Z2}
\lim_{n\to\infty} \| v_n - v\|_{H_0^1(D)} = 0\,.
\end{equation}
Then $\psi_v=(\psi_{v,1},\psi_{v,2})\in H^2(\Omega_1)\times H^2(\Omega_2(v))$ satisfies $\sigma\partial_z\psi_v\in H^1(\Omega(v))$ and
\begin{equation}\label{king}
\|\psi_v\|_{H^1(\Omega(v))}+\|\psi_{v,1}\|_{H^2(\Omega_1)}+\|\psi_{v,2}\|_{H^2(\Omega_2(v))} + \|\sigma \partial_z \psi_v \|_{H^1(\Omega(v))} \le c_{\ref{S3cst6}}(\kappa)
\end{equation}
 and
\begin{equation}\label{100}
\psi_{v_n,1}\rightharpoonup \psi_{v,1} \quad\text{in}\quad H^2(\Omega_1)\,,\qquad
\psi_{v_n,2}\rightharpoonup \psi_{v,2} \quad\text{in}\quad H^2(U)
\end{equation}
for any open set $U$ such that $\bar U$ is  a compact subset of $\Omega_2(v)$. 
\end{proposition}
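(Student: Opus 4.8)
The plan is to combine the uniform bounds of Proposition~\ref{P2} with the strong $H^1$-convergence supplied by Corollary~\ref{C3}, and then to pass to the limit by weak compactness, either on the fixed rectangle $\Omega_1$ or on subdomains relatively compact in $\Omega_2(v)$. First I would fix $M>0$: since $\|v_n\|_{H^2(D)}\le\kappa$ and $H^2(D)\hookrightarrow C(\bar D)$, the $v_n$ are uniformly bounded, so together with \eqref{Z2} one may choose $M$ with $-H\le v,v_n\le M$; thus \eqref{o1} holds and both Lemma~\ref{L6} and Corollary~\ref{C3} apply. Writing $\chi_n:=\psi_{v_n}-h_{v_n}$ and $\chi:=\psi_v-h_v$, Corollary~\ref{C3} gives $\chi_n\to\chi$ in $H_0^1(\Omega(M))$ (zero-extended), while Lemma~\ref{L6} gives $h_{v_n}\to h_v$ in $H^1(\Omega(M))$; hence the extended potentials $\tilde\psi_n:=\chi_n+h_{v_n}$ converge strongly in $H^1(\Omega(M))$ to $\psi_v$ (extended by $h_v$ outside $\Omega(v)$). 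This strong $H^1$-limit is what will pin down the weak limits below. The $H^1$-bound on $\psi_v$ in \eqref{king} is then immediate from $\psi_v=\chi+h_v$, Lemma~\ref{C1} and \eqref{pif}, while Proposition~\ref{P2} applied to each $v_n\in S\cap W_\infty^2(D)$ furnishes $\|\psi_{v_n,1}\|_{H^2(\Omega_1)}+\|\psi_{v_n,2}\|_{H^2(\Omega_2(v_n))}+\|\sigma\partial_z\psi_{v_n}\|_{H^1(\Omega(v_n))}\le c_{\ref{S3cst6}}(\kappa)$.

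For the first component, $\Omega_1$ is fixed and contained in every $\Omega(v_n)$, so $(\psi_{v_n,1})_n$ is bounded in $H^2(\Omega_1)$; since $\psi_{v_n,1}=\tilde\psi_n|_{\Omega_1}\to\psi_{v,1}$ strongly in $H^1(\Omega_1)$, every weak $H^2(\Omega_1)$-limit equals $\psi_{v,1}$, whence $\psi_{v_n,1}\rightharpoonup\psi_{v,1}$ in $H^2(\Omega_1)$, giving $\psi_{v,1}\in H^2(\Omega_1)$ with the bound by weak lower semicontinuity. For the second component I would fix an open $U$ with $\bar U\subset\Omega_2(v)$ compact; by \eqref{o1b}, $\bar U\subset\Omega_2(v_n)$ for all large $n$, so $(\psi_{v_n,2}|_U)_n$ is bounded in $H^2(U)$ and converges weakly, along a subsequence, to a limit identified as $\psi_{v,2}|_U$ through the strong $H^1(U)$-convergence of $\tilde\psi_n$. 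Uniqueness of the limit yields $\psi_{v_n,2}\rightharpoonup\psi_{v,2}$ in $H^2(U)$, which is \eqref{100}. Exhausting $\Omega_2(v)$ by such sets $U$ and using $\|\psi_{v,2}\|_{H^2(U)}\le\liminf_n\|\psi_{v_n,2}\|_{H^2(\Omega_2(v_n))}\le c_{\ref{S3cst6}}(\kappa)$ together with monotone convergence of the defining integrals upgrades this to $\psi_{v,2}\in H^2(\Omega_2(v))$ with the global bound in \eqref{king}. In particular $g:=\sigma\partial_z\psi_v$ is well-defined in $L_2(\Omega(v))$.

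The main obstacle is the claim $\sigma\partial_z\psi_v\in H^1(\Omega(v))$: the two subdomains meet along $\Sigma(v)$ and, when $\mathcal{C}(v)\neq\emptyset$, $\Omega_2(v)$ has cuspidal boundary points, so a piecewise argument would require traces on $\partial\Omega_2(v)$ that are not a priori available. I would therefore avoid any gluing and argue globally on the Lipschitz domain $\Omega(v)$ at the level of distributions, so that the transmission condition is incorporated automatically. With $g_n:=\sigma\partial_z\psi_{v_n}$, take $\phi\in C_c^\infty(\Omega(v))$; uniform convergence $v_n\to v$ places $\mathrm{supp}\,\phi$ inside $\Omega(v_n)$ for large $n$, where $g_n\in H^1$. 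Integrating by parts, using that $g_n\to g$ in $L_2$ on $\mathrm{supp}\,\phi$ (from the strong $H^1$-convergence of $\tilde\psi_n$ and boundedness of $\sigma$) and that $\nabla g_n$ is bounded in $L_2$ on relatively compact subsets, a diagonal extraction over an exhaustion of $\Omega(v)$ produces $G\in L_2(\Omega(v))^2$ with $\int_{\Omega(v)}g\,\partial_{x_i}\phi=-\int_{\Omega(v)}G_i\,\phi$ for every such $\phi$. Hence $\nabla g=G\in L_2(\Omega(v))$, so $g\in H^1(\Omega(v))$, and weak lower semicontinuity against $c_{\ref{S3cst6}}(\kappa)$ on each member of the exhaustion yields $\|\sigma\partial_z\psi_v\|_{H^1(\Omega(v))}\le c_{\ref{S3cst6}}(\kappa)$, completing \eqref{king}.
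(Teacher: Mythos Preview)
Your proof is correct and follows essentially the same route as the paper: both combine Corollary~\ref{C3} (strong $H^1$-convergence via $\Gamma$-convergence) with the uniform bounds of Proposition~\ref{P2}, identify the weak $H^2$-limits on $\Omega_1$ and on relatively compact $U\subset\Omega_2(v)$, and then pass from local to global estimates by exhaustion and lower semicontinuity (the paper uses Fatou's lemma where you invoke monotone convergence). Your distributional/exhaustion argument for $\sigma\partial_z\psi_v\in H^1(\Omega(v))$ spells out what the paper compresses into the phrase ``by a weak lower semicontinuity argument''; the diagonal extraction is harmless but unnecessary, since the $L_2$-convergence $g_n\to g$ already pins down the weak $H^1$-limit on each compact subset.
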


\begin{proof}
It first follows from \eqref{Z1}, \eqref{Z2}, and the continuous embedding of $H^2(D)$ in $C(\bar{D})$ that there is $M>0$ such that \eqref{o1} is satisfied. Thus, by Corollary~\ref{C3},
\begin{equation}
 \psi_{v_n}-h_{v_n}\rightarrow  \psi_v-h_v\quad\text{in }\ H_0^1(\Omega(M))\,. \label{queen}
\end{equation}
Next, owing to \eqref{Z1}, we infer from \eqref{est2} that
\begin{equation}\label{z3}
\|\psi_{v_n,1}\|_{H^2(\Omega_1)}+\|\psi_{v_n,2}\|_{H^2(\Omega_2(v_n))} + \|\sigma \partial_z \psi_{v_n} \|_{H^1(\Omega(v_n))} \le c_{\ref{S3cst6}}(\kappa)\,,\quad n\ge 1\,.
\end{equation}
Now, \eqref{queen}, \eqref{z3}, and Lemma~\ref{L6} ensure that $\psi_{v,1}\in H^2(\Omega_1)$ and $\psi_{v_n,1}\rightharpoonup \psi_{v,1}$ in $H^2(\Omega_1)$. Similarly, for any open set $U$ such that $\bar U$ is a compact subset of $\Omega_2(v)$, we infer from \eqref{Z2} and the continuous embedding of $H_0^1(D)$ in $C(\bar D)$ that $U\subset \Omega_2(v_n)$ for $n$ large enough. Thus, \eqref{queen}, \eqref{z3}, and Lemma~\ref{L6} imply that $\psi_{v,2}\in H^2(U)$ and $\psi_{v_n,2}\rightharpoonup \psi_{v,2}$ in $H^2(U)$. In particular, the latter along with \eqref{z3} gives
$$
\int_U\vert\partial_x^j\partial_z^k \psi_{v,2}\vert^2\,\rd (x,z)\le\liminf_{n\rightarrow \infty} \int_U\vert\partial_x^j\partial_z^k \psi_{v_n,2}\vert^2\,\rd (x,z)\le c_{\ref{S3cst6}}(\kappa)\,,\quad j+k\le 2\,.
$$
We then use Fatou's lemma to conclude that $\partial_x^j\partial_z^k \psi_{v,2}$ belongs to $L_2(\Omega_2(v))$ for $j+k\le 2$; that is, $\psi_{v,2}\in H^2(\Omega_2(v))$. Finally, we deduce the estimate \eqref{king} from \eqref{100} and \eqref{z3} by a weak lower semicontinuity argument.
\end{proof}

Combining Corollary~\ref{C3} and Proposition~\ref{ACDC} allows us now to extend the validity of Proposition~\ref{P1}~\textbf{(b)} to all $v\in \bar S$. Recall that, for $v\in\bar{S}\setminus S$,  $\Omega_2(v)$ is a well-defined open, but disconnected, set in $\R^2$ with a non-Lipschitz boundary, see Figures~\ref{Fig2} and~\ref{Fig3} and Remark~\ref{ECwasHere}.

\begin{corollary}\label{P1b}
Let $v\in \bar S$ and let $\psi_v \in \mathcal{A}(v)$ be the  unique minimizer of $\mathcal{J}(v)$ on $\mathcal{A}(v)$ provided by Proposition~\ref{P1}. Then $\psi_v=(\psi_{v,1},\psi_{v,2})\in H^2(\Omega_1)\times H^2(\Omega_2(v))$ with $\sigma\partial_z\psi_v\in H^1(\Omega(v))$ satisfies the transmission problem
\begin{subequations}\label{a11}
\begin{align}
\mathrm{div}(\sigma\nabla\psi_v)&=0 \quad\text{in }\ \Omega(v)\,,\label{a11a}\\
\llbracket \psi_v \rrbracket = \llbracket \sigma \partial_z \psi_v \rrbracket &=0\quad\text{on }\ \Sigma(v)\,,\label{a11b}\\
\psi_v&=h_v\quad\text{on }\ \partial\Omega(v)\,.\label{a11c}
\end{align}
\end{subequations}
Moreover, 
\begin{equation}\label{king3}
\|\psi_v\|_{H^1(\Omega(v))}+\|\psi_{v,1}\|_{H^2(\Omega_1)}+\|\psi_{v,2}\|_{H^2(\Omega_2(v))}  + \|\sigma \partial_z \psi_v \|_{H^1(\Omega(v))}  \le c(\kappa)\,,
\end{equation}
 provided $\|v\|_{H^2(D)}\le \kappa$.
\end{corollary}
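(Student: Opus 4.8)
The plan is to obtain the regularity and the bound \eqref{king3} directly from Proposition~\ref{ACDC}, and then to read off the transmission problem \eqref{a11} from the variational characterization of $\psi_v$ together with the global $H^1$-regularity just established. Since the heavy analytic work (the uniform $H^2$-estimates of Proposition~\ref{P2} and the $\Gamma$-convergence passage to the limit behind Proposition~\ref{ACDC}) has already been carried out, the present statement should amount to packaging those results and supplementing them with a trace argument on the interface.

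First I would produce an admissible approximating sequence. Since $S\cap W_\infty^2(D)$ is dense in $\bar S$ for the $H^2(D)$-topology, given $v\in\bar S$ with $\|v\|_{H^2(D)}\le\kappa$ I can choose $(v_n)_{n\ge1}$ in $S\cap W_\infty^2(D)$ with $v_n\to v$ in $H^2(D)$; in particular $(v_n)_{n\ge1}$ is bounded in $H^2(D)$ and $v_n\to v$ in $H_0^1(D)$, so that \eqref{Z1}--\eqref{Z2} are satisfied (with $\kappa$ replaced by $\kappa+1$, say). Proposition~\ref{ACDC} then immediately yields $\psi_v=(\psi_{v,1},\psi_{v,2})\in H^2(\Omega_1)\times H^2(\Omega_2(v))$, the property $\sigma\partial_z\psi_v\in H^1(\Omega(v))$, and the estimate \eqref{king}, which is exactly \eqref{king3}. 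This disposes of both the regularity and the quantitative bound.

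It then remains to verify that $\psi_v$ solves \eqref{a11}. The boundary condition \eqref{a11c} is built into the construction, since $\psi_v\in\mathcal{A}(v)=h_v+H_0^1(\Omega(v))$ forces $\psi_v=h_v$ on $\partial\Omega(v)$ in the sense of traces. For the interior equation \eqref{a11a}, I would use that the variational identity \eqref{e1} is a consequence of the minimizing property alone and therefore holds for every $v\in\bar S$ by Proposition~\ref{P1}~\textbf{(a)}, not merely for $v\in S\cap W_\infty^2(D)$. Testing \eqref{e1} with functions in $C_c^\infty(\Omega_1)$ and in $C_c^\infty$ of each connected component of $\Omega_2(v)$, and integrating by parts --- which is now legitimate thanks to $\psi_{v,1}\in H^2(\Omega_1)$ and $\psi_{v,2}\in H^2(\Omega_2(v))$ --- gives $\mathrm{div}(\sigma\nabla\psi_v)=0$ almost everywhere in $\Omega_1$ and in $\Omega_2(v)$, hence in $\Omega(v)$ since $\Sigma(v)$ is a null set.

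Finally, for the transmission conditions \eqref{a11b} I would invoke the global regularity: $\psi_v\in H^1(\Omega(v))$ and $\sigma\partial_z\psi_v\in H^1(\Omega(v))$, with $\Omega(v)$ a Lipschitz domain, so both functions admit well-defined traces on the interior interface $\Sigma(v)\subset\{z=-H\}$, and these traces, computed from $\Omega_1$ and from $\Omega_2(v)$, must agree; hence $\llbracket\psi_v\rrbracket=\llbracket\sigma\partial_z\psi_v\rrbracket=0$ on $\Sigma(v)$. I expect this last point to be the only genuinely delicate one: when $\mathcal{C}(v)\ne\emptyset$ the components of $\Omega_2(v)$ are not Lipschitz, as they carry cuspidal points, so the traces of $\psi_{v,2}$ and $\partial_z\psi_{v,2}$ on $\partial\Omega_2(v)$ are not \emph{a priori} meaningful component by component. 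The resolution, as explained in Remark~\ref{ECwasHere}, is precisely that $\psi_v$ and $\sigma\partial_z\psi_v$ are globally $H^1$ on the Lipschitz domain $\Omega(v)$, which is what confers a meaning on the jumps across $\Sigma(v)$ and forces them to vanish.
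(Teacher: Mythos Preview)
Your proof is correct and follows the paper's approach in its overall architecture: approximate $v\in\bar S$ by a sequence in $S\cap W_\infty^2(D)$ and invoke Proposition~\ref{ACDC} to obtain the $H^2$-regularity and the bound~\eqref{king3}; then read off \eqref{a11a} and \eqref{a11c} from the variational identity \eqref{e1} and the definition of $\mathcal{A}(v)$.

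The only substantive difference is in how you obtain the flux transmission condition~\eqref{a11b}. You deduce $\llbracket\sigma\partial_z\psi_v\rrbracket=0$ from the global regularity $\sigma\partial_z\psi_v\in H^1(\Omega(v))$ supplied by Proposition~\ref{ACDC}: since $\Omega(v)$ is Lipschitz and $\Sigma(v)$ is an interior Lipschitz curve, the one-sided traces of an $H^1(\Omega(v))$-function must agree there. The paper instead argues directly from the variational identity: for each interval $(a_i,b_i)$ of $D\setminus\mathcal{C}(v)$ it tests \eqref{e1} with $\theta\in\mathcal{D}(O_i(v))$ supported across the interface, applies Gau\ss' theorem in $\Omega_1\cap O_i(v)$ and in $\mathcal{O}_i(v)$ separately (legitimate by the piecewise $H^2$-regularity), and concludes $\llbracket\sigma\partial_z\psi_v\rrbracket=0$ on $(a_i,b_i)\times\{-H\}$; only then does it infer $\sigma\partial_z\psi_v\in H^1(\Omega(v))$. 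Thus the logical order is reversed: you go from global $H^1$ to zero jump, the paper from zero jump to global $H^1$. Both routes are valid; the paper's is more self-contained at this stage (it does not rely on the weak-limit argument hidden in the last line of the proof of Proposition~\ref{ACDC}), while yours is shorter because it exploits that assertion fully.
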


\begin{proof}
Let $v\in \bar S$ be fixed and $\kappa>0$ such that $\|v\|_{H^2(D)}\le \kappa/2$. We may choose a sequence $(v_n)_{n\ge 1}$ in $S\cap W_\infty^2(D)$ satisfying 
\begin{equation*}
v_n\rightarrow v \ \text{ in }\ H^2(D)\,,\qquad \sup_{n\ge 1}\,\|v_n\|_{H^2(D)}\le \kappa\,.
\end{equation*}
In particular, \eqref{Z1}-\eqref{Z2} are satisfied, so that Proposition~\ref{ACDC} implies that $(\psi_{v,1},\psi_{v,2})$ belongs to $H^2(\Omega_1)\times H^2(\Omega_2(v))$ and satisfies the estimate \eqref{king3}.

Regarding the transmission problem \eqref{a11}, recall first that $\psi_v$ satisfies \eqref{e1}. Since $v\in C(\bar D)$, we can write the open set $\{x\in D\,:\, v(x)>-H\}$  as a countable union of open intervals $((a_i,b_i))_{i\in I}$, see \cite[IX.Proposition~1.8]{AEIII}. 

Let $i\in I$ and set
$$
O_i(v):=\{(x,z)\in (a_i,b_i)\times\R\,:\, -H-d<z<v(x)\}\subset\Omega(v)
$$ and $\mathcal{O}_i(v):=\Omega_2(v)\cap O_i(v)$. It readily follows from  \eqref{e1} and the fact that $(\psi_{v,1},\psi_{v,2})$ belongs to $H^2(\Omega_1)\times H^2(\Omega_2(v))$ that $\mathrm{div}(\sigma\nabla\psi_v)=0$ in $\Omega_1$ and in each $\mathcal{O}_i(v)$, hence \eqref{a11a}. Moreover, for all $\theta\in \mathcal{D}(O_i(v))$ it follows from \eqref{e1} and Gau\ss' theorem that
\begin{equation*}
0=\int_{O_i(v)}\sigma\nabla\psi_v\cdot\nabla \theta\,\rd (x,z)  =\int_{a_i}^{b_i} \left(\llbracket\sigma\partial_z\psi_v\rrbracket\theta\right) (x,-H)\,\rd x\,,
\end{equation*}
hence $\llbracket\sigma\partial_z\psi_v\rrbracket (\cdot,-H)=0$ a.e. in $(a_i,b_i)$. Therefore, \eqref{a11b} holds, which in particular implies, together with the piecewise $H^2$-regularity of $\psi_v$, that $\sigma\partial_z\psi_v\in H^1(\Omega(v))$. Finally, since $\psi_v\in H^1(\Omega(v))$ we have $\llbracket \psi_v \rrbracket =0$ on $\Sigma(v)$,  while \eqref{a11c} is due to $\psi_v\in\mathcal{A}(v)$.
\end{proof}

Thanks to Corollary~\ref{P1b} we can extend the convergence established in Proposition~\ref{ACDC} to an arbitrary sequence $(v_n)_{n\ge 1}$ in $\bar{S}$. 

\begin{corollary}\label{Pam} 
Consider $\kappa>0$, $v\in\bar S$, and a sequence $(v_n)_{n\ge 1}$ satisfying
\begin{equation*}
v_n\in \bar{S} \qquad \text{ with }\qquad  \|v_n\|_{H^2(D)}\le \kappa\,,
\end{equation*}
and \eqref{Z2}. Then the convergence \eqref{100} holds true.
\end{corollary}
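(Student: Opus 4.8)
The plan is to repeat verbatim the compactness argument from the proof of Proposition~\ref{ACDC}, the only—but decisive—difference being the source of the uniform $H^2$-bounds. In Proposition~\ref{ACDC} these bounds were available exclusively for smooth states $v_n\in S\cap W_\infty^2(D)$ via \eqref{est2}, whereas Corollary~\ref{P1b} now furnishes the very same estimate for \emph{arbitrary} states $v_n\in\bar S$, in particular for those with non-empty coincidence set. Since all the genuine analytic difficulties have already been settled in Corollaries~\ref{C3} and~\ref{P1b}, what remains is a soft weak-compactness argument, and no new regularity theory is required; note also that $\psi_{v,2}\in H^2(\Omega_2(v))$ is already guaranteed by Corollary~\ref{P1b}, so the statement \eqref{100} is meaningful from the outset.

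First I would observe that, by the continuous embedding of $H^2(D)$ in $C(\bar D)$ and the bound $\|v_n\|_{H^2(D)}\le\kappa$, there is $M>0$ (for instance $M:=c\kappa$ with $c$ the embedding constant) such that $-H\le v,v_n\le M$ in $D$; together with \eqref{Z2} this shows that \eqref{o1} holds, where $v\le M$ follows from $v_n\le M$ and the uniform convergence $v_n\to v$ in $C(\bar D)$. Corollary~\ref{C3}, whose hypotheses are precisely \eqref{o1} and which imposes no smoothness on the $v_n$, then yields
\[
\psi_{v_n}-h_{v_n}\longrightarrow\psi_v-h_v\quad\text{in }H_0^1(\Omega(M))\,,
\]
which is exactly the convergence \eqref{queen}. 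Combining it with the $H^1(\Omega(M))$-convergence $h_{v_n}\to h_v$ provided by Lemma~\ref{L6}, I obtain the strong convergence $\psi_{v_n}\to\psi_v$ in $H^1(\Omega(M))$, and in particular $\psi_{v_n,1}\to\psi_{v,1}$ in $H^1(\Omega_1)$.

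Next, since each $v_n$ lies in $\bar S$ with $\|v_n\|_{H^2(D)}\le\kappa$, Corollary~\ref{P1b} supplies the uniform estimate \eqref{king3}, so that $(\psi_{v_n,1})_{n\ge1}$ is bounded in $H^2(\Omega_1)$ and $(\psi_{v_n,2})_{n\ge1}$ is bounded in $H^2(\Omega_2(v_n))$, uniformly in $n$. The $H^2(\Omega_1)$-bound together with the strong $H^1(\Omega_1)$-convergence already obtained identifies the weak limit and gives $\psi_{v_n,1}\rightharpoonup\psi_{v,1}$ in $H^2(\Omega_1)$. For the top part I would fix an open set $U$ with $\bar U$ a compact subset of $\Omega_2(v)$; by \eqref{Z2} and the embedding $H_0^1(D)\hookrightarrow C(\bar D)$ one has $U\subset\Omega_2(v_n)$ for all $n$ large, whence $(\psi_{v_n,2})_{n\ge1}$ is bounded in $H^2(U)$ by \eqref{king3}. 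Combined with the strong $H^1(U)$-convergence coming from the convergence in $H^1(\Omega(M))$ above, this yields $\psi_{v_n,2}\rightharpoonup\psi_{v,2}$ in $H^2(U)$, which is precisely \eqref{100}.

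There is no real obstacle left at this stage: the full weight of the statement rests on Corollary~\ref{P1b}, for the uniform $H^2$-bound on arbitrary states of $\bar S$, and on Corollary~\ref{C3}, for the $H^1$-convergence, both already established. The only point demanding slight care is the bookkeeping with the moving domains $\Omega_2(v_n)$, which is dealt with exactly as in Proposition~\ref{ACDC} by passing to a fixed interior set $U$ eventually contained in every $\Omega_2(v_n)$.
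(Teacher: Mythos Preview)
Your proposal is correct and follows exactly the approach of the paper: repeat the proof of Proposition~\ref{ACDC}, replacing the source of the uniform $H^2$-bound \eqref{z3} (previously \eqref{est2}, valid only for $v_n\in S\cap W_\infty^2(D)$) by the estimate \eqref{king3} from Corollary~\ref{P1b}, which holds for all $v_n\in\bar S$. The paper's own proof says precisely this in two sentences, and your write-up correctly fills in the routine weak-compactness details.
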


\begin{proof} The additional assumption $v_n\in S\cap W_\infty^2(D)$ is only used in the proof of Proposition~\ref{ACDC} to obtain the bound \eqref{z3}. Since such an estimate is now guaranteed by Corollary~\ref{P1b} as $\|v_n\|_{H^2(D)}\le \kappa$ for all $n\ge 1$, the proof of Corollary~\ref{Pam} follows the same lines as that of Proposition~\ref{ACDC}.
\end{proof}

The next step is to identify the limit of $\partial_z\psi_{v_n,2}(\cdot,v_n)$ as $n\rightarrow\infty$ within the framework of Proposition~\ref{ACDC}, which requires the following preparatory lemma.

\begin{lemma}\label{Eric} \refstepcounter{NumS3Const}\label{S3cst7}
Let $p\in [1,\infty)$, $\kappa>0$, and $v\in\bar S$ such that $\|v\|_{H^2(D)}\le \kappa$. Then there exists $c_{\ref{S3cst7}}(p,\kappa)>0$ such that
$$
\|\partial_z\psi_{v,2}(\cdot,v)\|_{L_p(D\setminus \mathcal{C}(v))}\le c_{\ref{S3cst7}}(p,\kappa)\,,
$$
the coincidence set $\mathcal{C}(v)$ of $v$ being defined in \eqref{CS}.
\end{lemma}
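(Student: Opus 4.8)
The plan is to avoid taking traces directly on the possibly non-Lipschitz set $\Omega_2(v)$ and instead to work with the globally $H^1$ function $\sigma\partial_z\psi_v$ on the domain $\Omega(v)$, which is always Lipschitz as a whole. First I would set $\theta:=\sigma\partial_z\psi_v$ and record, from Corollary~\ref{P1b} and the estimate \eqref{king3}, that $\theta\in H^1(\Omega(v))$ with $\|\theta\|_{H^1(\Omega(v))}\le c(\kappa)$, using here the hypothesis $\|v\|_{H^2(D)}\le\kappa$.

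Next I would apply the trace estimate \eqref{n11} of Lemma~\ref{L4q} to $\theta$ with the critical choice $\alpha=1/2$. Since then $(1-2\alpha)/2=0$ and $(2\alpha+1)/2=1$, this yields
$$
\|\theta(\cdot,v)\|_{H^{1/2}(D)}\le c(\kappa)\,\|\theta\|_{H^1(\Omega(v))}\le c(\kappa)\,,
$$
where $\theta(\cdot,v)$ denotes the trace of $\theta$ on the upper boundary $\mathfrak{G}(v)$, i.e.\ $\theta(x,v(x))$ for $x\in D$; this trace is meaningful precisely because $\Omega(v)$ is Lipschitz, as exploited in the proof of Lemma~\ref{L4q}. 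I would then invoke the one-dimensional Sobolev embedding $H^{1/2}(D)\hookrightarrow L_p(D)$, which holds for every $p\in[1,\infty)$ because the exponent $1/2$ is critical in dimension one, to obtain $\|\theta(\cdot,v)\|_{L_p(D)}\le c(p)\,\|\theta(\cdot,v)\|_{H^{1/2}(D)}\le c(p,\kappa)$.

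It then remains to identify this trace on $D\setminus\mathcal{C}(v)$. Since $\sigma\equiv\sigma_2$ on $\Omega_2(v)$, we have $\theta\vert_{\Omega_2(v)}=\sigma_2\partial_z\psi_{v,2}$, and for $x\in D\setminus\mathcal{C}(v)$ the point $(x,v(x))$ lies on the portion of $\mathfrak{G}(v)$ contained in $\overline{\Omega_2(v)}$, where the boundary is locally Lipschitz, so that the trace taken from $\Omega(v)$ and the trace taken from $\Omega_2(v)$ agree. Hence $\theta(\cdot,v)=\sigma_2\partial_z\psi_{v,2}(\cdot,v)$ on $D\setminus\mathcal{C}(v)$, and restricting the $L_p$-norm gives
$$
\|\partial_z\psi_{v,2}(\cdot,v)\|_{L_p(D\setminus\mathcal{C}(v))}=\sigma_2^{-1}\,\|\theta(\cdot,v)\|_{L_p(D\setminus\mathcal{C}(v))}\le \sigma_2^{-1}\,\|\theta(\cdot,v)\|_{L_p(D)}\le c(p,\kappa)\,,
$$
as claimed.

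The only delicate point, and the one I expect to require the most care in the writeup, is this final identification of the trace on the set $\{v>-H\}$: it is exactly where the cuspidal points of $\partial\Omega_2(v)$ (living over $\mathcal{C}(v)$) are excluded, and the whole strategy is designed so that the trace is computed on the Lipschitz domain $\Omega(v)$ via $\sigma\partial_z\psi_v$, rather than on the possibly irregular components of $\Omega_2(v)$. Everything else is a direct application of the uniform bound \eqref{king3} and the functional inequality \eqref{n11} already established.
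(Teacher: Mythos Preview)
Your proof is correct and is in fact shorter and cleaner than the paper's. You go straight for the trace estimate \eqref{n11} at the endpoint $\alpha=1/2$ applied to $\theta=\sigma\partial_z\psi_v\in H^1(\Omega(v))$, then use the critical one-dimensional embedding $H^{1/2}(D)\hookrightarrow L_p(D)$ for all $p<\infty$; the final identification $\theta(\cdot,v)=\sigma_2\,\partial_z\psi_{v,2}(\cdot,v)$ on $D\setminus\mathcal{C}(v)$ is justified exactly as you say, since near any such point $\Omega(v)$ and $\Omega_2(v)$ coincide and the boundary is locally Lipschitz.

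The paper instead argues componentwise on each interval of $\{v>-H\}$ via the fundamental theorem of calculus: it writes $\sigma_2\partial_z\psi_{v,2}(x,v(x))$ as $\sigma_1(x,-H)\partial_z\psi_{v,1}(x,-H)$ (using the transmission condition) plus $\sigma_2\int_{-H}^{v(x)}\partial_z^2\psi_{v,2}\,\rd z$, then raises to the $p$-th power, sums over intervals, and bounds the three resulting pieces using respectively the trace on the rectangle $\Omega_1$, the embedding \eqref{n10} applied to $\sigma\partial_z\psi_v$, and the $H^2$-bound \eqref{king3}. Both routes ultimately rest on the same two ingredients, namely $\sigma\partial_z\psi_v\in H^1(\Omega(v))$ and Lemma~\ref{L4q}, but the paper uses \eqref{n10} rather than \eqref{n11}. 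Your argument is more economical; the paper's has the minor advantage that the fundamental-theorem-of-calculus identity it isolates is reused verbatim later in the proof of Proposition~\ref{ACDC1}, so it serves as a warm-up for that computation.
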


\begin{proof}
As in Corollary~\ref{P1b}, since $v\in C(\bar D)$, we can write the open set $\{x\in D\,:\, v(x)>-H\}$  as a countable union of open intervals $((a_i,b_i))_{i\in I}$ and define, for $i\in I$, 
$$
O_i(v):=\{(x,z)\in (a_i,b_i)\times \R\,:\, -H-d<z<v(x)\}\subset\Omega(v)
$$ and $\mathcal{O}_i(v):=\Omega_2(v)\cap O_i(v)$. As $D\setminus \mathcal{C}(v)$ has finite measure, we may assume that $p\in [3/2,\infty)$. Let $i\in I$. Since $\psi_{v,2}\in H^2(\mathcal{O}_i(v))$ by Corollary~\ref{P1b}, it follows from \eqref{a11b} and Young's inequality that, for $x\in (a_i,b_i)$,
\begin{equation*}
\begin{split}
\sigma_2^p\big\vert\partial_z\psi_{v,2}(x,v(x))\big\vert^p & \le \big\vert\sigma_2\partial_z\psi_{v,2}(x,-H)\big\vert^p +p\sigma_2^p\int_{-H}^{v(x)}\vert\partial_z\psi_{v,2}(x,z)\vert^{p-1} \vert\partial_z^2\psi_{v,2}(x,z)\vert\, \rd z\\
&\le \vert\sigma_1(x,-H)\partial_z\psi_{v,1}(x,-H)\vert^p+\frac{p}{2}\sigma_2^{2(p-1)} \int_{-H}^{v(x)}\vert\partial_z\psi_{v,2}(x,z)\vert^{2(p-1)} \, \rd z\\
&\qquad +\frac{p}{2}\sigma_2^{2} \int_{-H}^{v(x)}\vert\partial_z^2\psi_{v,2}(x,z)\vert^{2} \, \rd z\,.
\end{split}
\end{equation*}
Integrating with respect to $x\in (a_i,b_i)$, we find
\begin{equation*}
\begin{split}
\sigma_2^p\int_{a_i}^{b_i}\vert\partial_z\psi_{v,2}(x,v(x))\vert^p \,\rd x \le &
\int_{a_i}^{b_i}\vert\sigma_1(x,-H)\partial_z\psi_{v,1}(x,-H)\vert^p\,\rd x\\
&
+\frac{p}{2}\sigma_2^{2(p-1)} \int_{\mathcal{O}_i(v)}\vert\partial_z\psi_{v,2}(x,z)\vert^{2(p-1)} \, \rd (x,z)\\
& +\frac{p}{2}\sigma_2^{2} \int_{\mathcal{O}_i(v)}\vert\partial_z^2\psi_{v,2}(x,z)\vert^{2} \, \rd (x,z)\,.
\end{split}
\end{equation*}
Summing over all $i\in I$ we obtain
\begin{equation*}
\begin{split}
\sigma_2^p\int_{D\setminus\mathcal{C}(v)}\vert\partial_z\psi_{v,2}(x,v(x))\vert^p\,\rd x  \le &
\int_{D\setminus\mathcal{C}(v)}\vert \sigma_1(x,-H)\partial_z\psi_{v,1}(x,-H)\vert^p\,\rd x\\
&
+\frac{p}{2} \int_{\Omega_2(v)}\vert\sigma_2 \partial_z\psi_{v,2}(x,z)\vert^{2(p-1)} \, \rd (x,z)\\
& +\frac{p}{2} \sigma_2^2 \int_{\Omega_2(v)}\vert \partial_z^2\psi_{v,2}(x,z)\vert^{2} \, \rd (x,z)
\end{split}
\end{equation*}
and then infer from \eqref{sigmamin} and \eqref{king3} that
\begin{equation}\label{X1}
\begin{split}
\sigma_2^p\int_{D\setminus\mathcal{C}(v)}\vert\partial_z\psi_{v,2}(x,v(x))\vert^p\,\rd x \le &  \|\sigma_1\|_{L_\infty(D)} \int_{D}\vert\partial_z\psi_{v,1}(x,-H)\vert^p\,\rd x\\
& +\frac{p}{2} \|\sigma\partial_z\psi_{v}\|_{L_{2(p-1)}(\Omega(v))}^{2(p-1)} +\frac{p}{2}  \sigma_2^2 c(\kappa)^{2} \,.
\end{split}
\end{equation}
On the one hand,  $\partial_z\psi_{v,1}$ belongs to $H^1(\Omega_1)$ and the continuity of the trace from $H^1(\Omega_1)$ to $H^{1/2}(D\times\{-H\})$ combined with the continuous embedding of $H^{1/2}(D)$ in $L_p(D)$ and \eqref{king3} imply that 
\begin{equation}\label{X2}
\begin{split}
\|\partial_z\psi_{v,1}(\cdot,-H)\|_{L_p(D)}& \le c(p) \| \psi_{v,1}(\cdot,-H)\|_{H^{1/2}(D)}\\
& \le c(p) \| \psi_{v,1}\|_{H^{1}(\Omega_1)}\le c(p,\kappa)\,.
\end{split}
\end{equation}
On the other hand, $\sigma\partial_z\psi_v$ belongs to $H^1(\Omega(v))$ and it follows from Lemma~\ref{L4q} (with $q=2(p-1)$) and \eqref{king3} that
\begin{equation}\label{X3}
\begin{split}
\|\sigma \partial_z\psi_{v}\|_{L_{2(p-1)}(\Omega(v))}& \le c_{\ref{S3cst3}}(2p-1,\kappa) \|\sigma \partial_z\psi_{v}\|_{H^{1}(\Omega(v))} \le c(p,\kappa)\,.
\end{split}
\end{equation}
Combining \eqref{X1}-\eqref{X3} completes the proof.
\end{proof}

\subsection{Limit Behavior of the Trace of the Vertical Derivative}\label{LBTVD}

To derive the continuity property of the function $\mathfrak{g}$ stated in Theorem~\ref{Thm1b}, we shall next investigate the continuity with respect to $v\in \bar S$ of the potential's vertical derivative $\partial_z\psi_{v}$ traced along the graph $\mathfrak{G}(v)$. Recall that $\partial_z\psi_{v}$ along $\mathfrak{G}(v)$ consists of the two parts $$D\setminus \mathcal{C}(v)\rightarrow \R\,, \quad x\mapsto \partial_z\psi_{v,2}(x,v(x))$$ and $$\mathcal{C}(v)\rightarrow \R\,,\quad x\mapsto \partial_z\psi_{v,1}(x,-H)$$ and that the transition between $\partial_z\psi_{v,2}$ and $\partial_z\psi_{v,1}$ across the interface is prescribed by the transmission condition~\eqref{a11b} involving $\sigma$.

\begin{proposition}\label{ACDC1}
Consider $\kappa>0$, $v\in\bar S$, and a sequence $(v_n)_{n\ge 1}$ in $\bar S$ satisfying
\begin{equation}
\|v_n\|_{H^2(D)}\le \kappa \;\;\text{ and }\;\; \lim_{n\to\infty} \| v_n - v\|_{H^1(D)} =0\,. \label{plume}
\end{equation}
Then 
\begin{equation}\label{102}
\ell(v_n) \rightarrow  \ell(v) \quad\text{in}\quad L_p(D) 
\end{equation}
for $p\in [1,\infty)$, where $\ell(v)\in L_p(D)$ is given by
$$
\ell(v)(x):=\left\{\begin{array}{ll} \partial_z\psi_{v,2}(x,v(x))\,, & x\in D\setminus \mathcal{C}(v)\,,\\
\hphantom{x}\vspace{-3.5mm}\\
 \displaystyle{\frac{\sigma_1(x,-H)}{\sigma_2}\partial_z\psi_{v,1}(x,-H)}\,, & x\in \mathcal{C}(v)\,.
\end{array}\right.
$$
\end{proposition}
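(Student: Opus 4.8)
The starting point of my plan is the observation that, for every $v\in\bar S$, the function $\ell(v)$ is, up to the constant factor $\sigma_2$, nothing but the trace of the \emph{single} function $\sigma\partial_z\psi_v\in H^1(\Omega(v))$ on the upper part $\mathfrak{G}(v)$ of $\partial\Omega(v)$, read along the parametrization $x\mapsto(x,v(x))$. Indeed, for $x\in D\setminus\mathcal{C}(v)$ the point $(x,v(x))$ lies in $\overline{\Omega_2(v)}$ with $v(x)>-H$, so this trace equals $\sigma_2\partial_z\psi_{v,2}(x,v(x))=\sigma_2\ell(v)(x)$; whereas for $x\in\mathcal{C}(v)$ the point $(x,v(x))=(x,-H)$ is approached from within $\Omega_1$, and the continuity of $\sigma\partial_z\psi_v$ across the interface --- guaranteed by $\sigma\partial_z\psi_v\in H^1(\Omega(v))$ together with the transmission condition \eqref{a11b} --- forces the trace to equal $\sigma_1(x,-H)\partial_z\psi_{v,1}(x,-H)=\sigma_2\ell(v)(x)$. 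Thus $\ell(v)=\sigma_2^{-1}$ times the trace of $\sigma\partial_z\psi_v$ along $\mathfrak{G}(v)$, and the plan is to prove convergence of these traces by flattening the moving graph $\mathfrak{G}(v_n)$ onto the fixed segment $D\times\{1\}$.

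First I would note that \eqref{plume} and the embedding $H^2(D)\hookrightarrow C(\bar D)$ provide some $M>0$ for which the hypothesis \eqref{o1} of Section~\ref{GCDE} holds, as in the proof of Proposition~\ref{ACDC}. I would then transport everything to the fixed rectangle $\mathcal{R}_2=D\times(0,1)$ through the bi-Lipschitz map $\mathfrak{T}_{v_n}$ of \eqref{TT}, which sends $\Omega(v_n)$ onto $\mathcal{R}_2$ and $\mathfrak{G}(v_n)$ onto $D\times\{1\}$, and which is crucially uniformly non-degenerate even when $\mathcal{C}(v_n)\neq\emptyset$, since $H+d+v_n\ge d>0$. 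Setting $\phi_n:=(\sigma\partial_z\psi_{v_n})\circ\mathfrak{T}_{v_n}^{-1}$ and $\phi:=(\sigma\partial_z\psi_v)\circ\mathfrak{T}_v^{-1}$, two facts drive the argument. On the one hand, $(\phi_n)$ is bounded in $H^1(\mathcal{R}_2)$: this is \eqref{n1} applied to $\theta=\sigma\partial_z\psi_{v_n}$, together with the uniform bound $\|\sigma\partial_z\psi_{v_n}\|_{H^1(\Omega(v_n))}\le c(\kappa)$ from \eqref{king3}. On the other hand, $\phi_n\to\phi$ in $L_2(\mathcal{R}_2)$: writing $\psi_{v_n}=(\psi_{v_n}-h_{v_n})+h_{v_n}$ and extending $\psi_{v_n}-h_{v_n}$ by zero, Corollary~\ref{C3} gives strong convergence of $\psi_{v_n}-h_{v_n}$ to $\psi_v-h_v$ in $H_0^1(\Omega(M))$ and Lemma~\ref{L6} gives $h_{v_n}\to h_v$ in $H^1(\Omega(M))$, whence $\sigma\partial_z\psi_{v_n}\to\sigma\partial_z\psi_v$ in $L_2(\Omega(M))$, the weight being the fixed function equal to $\sigma_1$ on $\Omega_1$ and to the constant $\sigma_2$ on $\{z>-H\}$. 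A change of variables then reduces $\|\phi_n-\phi\|_{L_2(\mathcal{R}_2)}$ to this strong $L_2$-convergence together with the continuity of translations in $L_2$, exploiting $\|v_n-v\|_{L_\infty(D)}\to0$ and the uniformly bounded Jacobians of $\mathfrak{T}_{v_n}$.

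Combining the two facts yields $\phi_n\rightharpoonup\phi$ in $H^1(\mathcal{R}_2)$. The continuity of the trace operator from $H^1(\mathcal{R}_2)$ to $H^{1/2}(D\times\{1\})$ then gives $\phi_n(\cdot,1)\rightharpoonup\phi(\cdot,1)$ in $H^{1/2}(D)$, and the compactness of the embedding $H^{1/2}(D)\hookrightarrow L_2(D)$ upgrades this to strong convergence $\phi_n(\cdot,1)\to\phi(\cdot,1)$ in $L_2(D)$. Since $\mathfrak{T}_{v_n}$ is bi-Lipschitz the trace commutes with the change of variables, so that $\phi_n(\cdot,1)=\sigma_2\ell(v_n)$ and $\phi(\cdot,1)=\sigma_2\ell(v)$ by the identification of the first paragraph; hence $\ell(v_n)\to\ell(v)$ in $L_2(D)$. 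It then remains to pass to $L_p(D)$ for every $p\in[1,\infty)$. For $p\le2$ this is immediate since $D$ has finite measure; for $p>2$ I would use the uniform bound $\|\ell(v_n)\|_{L_q(D)}\le c(q,\kappa)$, valid for every $q\in[1,\infty)$ by Lemma~\ref{Eric} on $D\setminus\mathcal{C}(v_n)$ and the trace estimate of its proof on $\mathcal{C}(v_n)$ (equivalently, by applying \eqref{n11} to $\theta=\sigma\partial_z\psi_{v_n}$ and invoking $H^{1/2}(D)\hookrightarrow L_q(D)$), and then interpolate: choosing $q>p$ and $\vartheta\in(0,1)$ with $1/p=\vartheta/2+(1-\vartheta)/q$ gives $\|\ell(v_n)-\ell(v)\|_{L_p(D)}\le\|\ell(v_n)-\ell(v)\|_{L_2(D)}^{\vartheta}\,\|\ell(v_n)-\ell(v)\|_{L_q(D)}^{1-\vartheta}\to0$.

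The main obstacle in this scheme is the trace identification at the coincidence set carried out in the first paragraph: reading the single $H^1$-trace of $\sigma\partial_z\psi_{v_n}$ consistently on $D\setminus\mathcal{C}(v_n)$ and on $\mathcal{C}(v_n)$ is precisely what the transmission condition \eqref{a11b} and the $H^1(\Omega(v_n))$-regularity of $\sigma\partial_z\psi_{v_n}$ are needed for, and it is the reason why the non-degenerate flattening $\mathfrak{T}_{v_n}$ --- rather than the interface-preserving map $T_2$ of \eqref{t2}, which becomes singular as soon as $v_n$ touches $-H$ --- must be used. This is also where the analysis genuinely differs between the region $D\setminus\mathcal{C}(v)$, on which $\ell(v)$ is the graph trace of $\partial_z\psi_{v,2}$, and the set $\mathcal{C}(v)$, on which it is instead governed by the normal derivative of $\psi_{v,1}$ on $\Sigma$.
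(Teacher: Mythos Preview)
Your approach is correct and takes a genuinely different route from the paper's proof. The paper never makes your key observation that $\sigma_2\ell(v)$ is the trace of the \emph{single} function $\sigma\partial_z\psi_v\in H^1(\Omega(v))$ along $\mathfrak{G}(v)$; instead it works componentwise with $\psi_{v,1}$ and $\psi_{v,2}$, partitions $D$ into $\Lambda(\epsilon)=\{v>-H+\epsilon\}$, $(D\setminus\Lambda(\epsilon))\cap\mathcal{C}(v_n)$ and $D\setminus(\Lambda(\epsilon)\cup\mathcal{C}(v_n))$, and treats each piece by a different mechanism: on $\Lambda(\epsilon)$ it exploits the weak $H^2$-convergence of $\psi_{v_n,2}$ on compact subsets of $\Omega_2(v)$ from Corollary~\ref{Pam} together with an integration in $z$ to compare traces at $v$ and $v-\epsilon$; near the coincidence set it writes $\partial_z\psi_{v_n,2}(x,v_n(x))$ as $\sigma_1/\sigma_2\,\partial_z\psi_{v_n,1}(x,-H)$ plus an integral remainder via \eqref{a11b}, and controls the remainder by $\|v_n+H\|_{L_\infty}$; it then passes through a finite-subcover argument indexed by $J_\delta$ to handle the at most countably many components of $\Lambda(\epsilon)$, and lets $\epsilon,\delta\to0$.

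Your flattening via $\mathfrak{T}_{v_n}$ collapses all of this into the single compactness step $H^{1/2}(D)\hookrightarrow L_2(D)$, and is substantially shorter. Two small points are worth flagging. First, the phrase ``continuity of translations in $L_2$'' is a slight abuse: the vertical shift $z_n(x,\eta)-z(x,\eta)=\eta(v_n(x)-v(x))$ depends on $x$, so what is really used is approximation of $\theta=\sigma\partial_z\psi_v$ (extended to $\Omega(M)$) by continuous compactly supported functions, combined with the uniform Jacobian bounds of $\mathfrak{T}_{v_n}$; this is routine but should be said. Second, the trace identification on $\mathcal{C}(v_n)$ is indeed justified: for a.e.\ $x$ one has $\phi_n(x,\cdot)\in H^1(0,1)$ by Fubini, hence continuous up to $\eta=1$, and for $x\in\mathcal{C}(v_n)$ the vertical fibre $\{x\}\times(0,1)$ is mapped by $\mathfrak{T}_{v_n}^{-1}$ entirely into $\Omega_1$, so the limit is the trace of $\sigma_1\partial_z\psi_{v_n,1}$ at $z=-H$. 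What your approach buys is conceptual economy and independence from the (possibly intricate) geometry of $\mathcal{C}(v)$; what the paper's approach buys is a more explicit accounting of where each piece of the error comes from, which could be useful if one later needed quantitative rates.
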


\begin{proof}
We first observe that the trace theorem, \eqref{sigmamin}, and the $H^2$-regularity of $\psi_{v,1}$ provided by Corollary~\ref{P1b} imply that $x\mapsto \partial_z \psi_{v,1}(x,-H)$ belongs to $L_p(D)$ for any $p\in [1,\infty)$. We deduce from this fact and Lemma~\ref{Eric} that $\ell(v)\in L_p(D)$ for $p\in [1,\infty)$. Also, it follows from \eqref{plume} that $\|v\|_{H^2(D)} \le \kappa$.

Let $\epsilon\in (0,H)$ be arbitrarily fixed. Due to $v_n\rightarrow v$ in $H_0^1(D)$ and the embedding of $H_0^1(D)$ in $C(\bar D)$, there is $n_\epsilon\ge 1$ such that
\begin{equation}\label{z7}
v(x)-\epsilon\le v_n(x)\le v(x)+ \epsilon\,,\qquad x\in \bar D\,,\quad n\ge n_\epsilon\,.
\end{equation}
Moreover, since $v\in C(\bar D)$ with $v(\pm L)=0$, the set 
\begin{equation*}
\Lambda(\epsilon):=\{x\in D\,:\,v(x)>-H+\epsilon\}
\end{equation*} 
is non-empty and open, and we can thus write it as a countable union of open intervals $(\Lambda_i(\epsilon))_{i\in I}$, see \cite[IX.Proposition~1.8]{AEIII}. For any fixed index $i\in I$  define the open set
$$
U_i(\epsilon):=\{(x,z)\in \Lambda_i(\epsilon) \times (-H,\infty)\,:\, z< v(x)- \epsilon\}
$$
and note that $U_i(\epsilon)$ has a Lipschitz boundary, with $U_i(\epsilon) \subset\Omega_2(v_n)$  for $n\ge n_\epsilon$ by \eqref{z7}. Thanks to \eqref{z3},  $(\psi_{v_n,2})_{n\ge 1}$ is relatively compact in $H^s(U_i(\epsilon))$ for any $s\in (3/2,2)$, so that \eqref{100} implies $\psi_{v_n,2}\rightarrow \psi_{v,2}$ in  $H^s(U_i(\epsilon))$. Using the continuity of the trace operator from $H^{s-1}(U_i(\epsilon))$ to $L_2(\partial U_i(\epsilon))$ and noticing the inclusion $\{(x,v(x)-\epsilon)\,:\, x\in \Lambda_i(\epsilon)\}\subset \partial U_i(\epsilon)$, we deduce 
 \begin{equation}\label{z9}
\partial_z\psi_{v_n,2}(\cdot,v- \epsilon)\rightarrow \partial_z\psi_{v,2}(\cdot,v-\epsilon) \quad\text{ in }\ L_2(\Lambda_i(\epsilon))\,. 
\end{equation}
Next, we put
$$
\Psi_n(x):=\partial_z\psi_{v,2}(x,v(x))-\partial_z\psi_{v_n,2}(x,v_n(x))\,,\qquad x\in \Lambda(\epsilon)\,,\quad n\ge 1\,,
$$
and observe that, for $i\in I$ and $n\ge n_\epsilon$,
\begin{equation*}
\begin{split}
\|\Psi_n\|_{L_2(\Lambda_i(\epsilon))}
& \le \|\partial_z\psi_{v,2}(\cdot,v-\epsilon)-\partial_z\psi_{v_n,2}(\cdot,v-\epsilon)\|_{L_2(\Lambda_i(\epsilon))}\\
&\qquad
+\left\| \int_{v-\epsilon}^v \partial_z^2\psi_{v,2}(\cdot,z)\,\rd z 
-\int_{v-\epsilon}^{v_n} \partial_z^2\psi_{v_n,2}(\cdot,z)\,\rd z\right\|_{L_2(\Lambda_i(\epsilon))}\\
& \le \|\partial_z\psi_{v,2}(\cdot,v-\epsilon)-\partial_z\psi_{v_n,2}(\cdot,v-\epsilon)\|_{L_2(\Lambda_i(\epsilon))}\\
&\qquad
+\left(\int_{\Lambda_i(\epsilon)} \epsilon \int_{v-\epsilon}^v \vert\partial_z^2\psi_{v,2} \vert^2\,\rd z\rd x\right)^{1/2} \\
&\qquad
+\left(\int_{\Lambda_i(\epsilon)} (v_n-v+\epsilon) \int_{v-\epsilon}^{v_n} \vert\partial_z^2\psi_{v_n,2} \vert^2\,\rd z\rd x\right)^{1/2}\,.
\end{split}
\end{equation*}
Using \eqref{z7} and the inequality $(a+b+c)^2\le 3(a^2+b^2+c^2)$, we infer that, for $n\ge n_\epsilon$,
\begin{equation*}
\begin{split}
\|\Psi_n\|_{L_2(\Lambda_i(\epsilon))}^2
 \le\ & 3\|\partial_z\psi_{v,2}(\cdot,v-\epsilon)-\partial_z\psi_{v_n,2}(\cdot,v-\epsilon)\|_{L_2(\Lambda_i(\epsilon))}^2\\
&+3\epsilon \int_{\Lambda_i(\epsilon)} \int_{v-\epsilon}^v \vert\partial_z^2\psi_{v,2} \vert^2\,\rd z\rd x 
+6\epsilon\int_{\Lambda_i(\epsilon)}  \int_{v-\epsilon}^{v_n} \vert\partial_z^2\psi_{v_n,2} \vert^2\,\rd z\rd x\\
\le\ &  3\|\partial_z\psi_{v,2}(\cdot,v-\epsilon)-\partial_z\psi_{v_n,2}(\cdot,v-\epsilon)\|_{L_2(\Lambda_i(\epsilon))}^2\\
&+3\epsilon \int_{\Lambda_i(\epsilon)} \int_{-H}^v \vert\partial_z^2\psi_{v,2} \vert^2\,\rd z\rd x 
+ 6\epsilon\int_{\Lambda_i(\epsilon)}  \int_{-H}^{v_n} \vert\partial_z^2\psi_{v_n,2} \vert^2\,\rd z\rd x\,.
\end{split}
\end{equation*}
Now, if $J$ is any finite subset of $I$, it follows from \eqref{king3} (applied to $v$ and $v_n$) and the previous inequality that, for $n\ge n_\epsilon$,
\begin{equation*}
\begin{split}
\sum_{i\in J}\|\Psi_n\|_{L_2(\Lambda_i(\epsilon))}^2
 \le\ & 3\sum_{i\in J}\|\partial_z\psi_{v,2}(\cdot,v-\epsilon)-\partial_z\psi_{v_n,2}(\cdot,v-\epsilon)\|_{L_2(\Lambda_i(\epsilon))}^2\\
&+3\epsilon \int_{\Lambda(\epsilon)} \int_{-H}^v \vert\partial_z^2\psi_{v,2} \vert^2\,\rd z\rd x 
+ 6\epsilon\int_{\Lambda(\epsilon)}  \int_{-H}^{v_n} \vert\partial_z^2\psi_{v_n,2} \vert^2\,\rd z\rd x\\
\le\ &  3 \sum_{i\in J}\|\partial_z\psi_{v,2}(\cdot,v-\epsilon)-\partial_z\psi_{v_n,2}(\cdot,v-\epsilon)\|_{L_2(\Lambda_i(\epsilon))}^2\\
&+3\epsilon \int_{\Omega_2(v)}  \vert\partial_z^2\psi_{v,2} \vert^2\,  \rd (x,z)
+6\epsilon\int_{\Omega_2(v_n)}  \vert\partial_z^2\psi_{v_n,2} \vert^2\, \rd (x,z)\\
\le\ &  3 \sum_{i\in J}\|\partial_z\psi_{v,2}(\cdot,v-\epsilon)-\partial_z\psi_{v_n,2}(\cdot,v-\epsilon)\|_{L_2(\Lambda_i(\epsilon))}^2+ c(\kappa) \epsilon\,.
\end{split}
\end{equation*}
Letting $n\rightarrow\infty$ and recalling \eqref{z9} and the finiteness of $J$ we get
\begin{equation}\label{z10a}
\limsup_{n\rightarrow\infty} \sum_{i\in J}\|\Psi_n\|_{L_2(\Lambda_i(\epsilon))}^2 \le c(\kappa)   \epsilon\,.
\end{equation}
Next, let $\delta\in (0,1)$. Since
$$
\sum_{i\in I} \vert \Lambda_i(\epsilon)\vert =\vert \Lambda(\epsilon)\vert\le \vert D\vert\,,
$$
there is a finite subset $J_\delta\subset I$ such that
\begin{equation}\label{X5}
\vert \Lambda(\epsilon)\vert-\delta\le \sum_{i\in J_\delta} \vert \Lambda_i(\epsilon)\vert \le \vert \Lambda(\epsilon)\vert\,.
\end{equation}
Hence, setting $\Lambda^\delta(\epsilon):=\bigcup_{i\in J_\delta}\Lambda_i(\epsilon)$, we get
\begin{equation*}
\begin{split}
\|\Psi_n\|_{L_2(\Lambda(\epsilon))}^2 \le\ & \sum_{i\in J_\delta}\| \Psi_n\|_{L_2(\Lambda_i(\epsilon))}^2 + 2 \int_{\Lambda(\epsilon)\setminus \Lambda^\delta(\epsilon)} \left(\vert\partial_z\psi_{v,2}(\cdot,v)\vert^2 +\vert\partial_z\psi_{v_n,2}(\cdot,v_n)\vert^2\right)\,\rd (x,z)\\
\le\ &  \sum_{i\in J_\delta}\| \Psi_n\|_{L_2(\Lambda_i(\epsilon))}^2
+2\vert \Lambda(\epsilon)\setminus \Lambda^\delta(\epsilon)\vert^{1/2}\, \|\partial_z\psi_{v,2}(\cdot,v)\|_{L_4(\Lambda(\epsilon)\setminus \Lambda^\delta(\epsilon))}^2 \\
&  + 2\vert \Lambda(\epsilon)\setminus \Lambda^\delta(\epsilon)\vert^{1/2}\, \|\partial_z\psi_{v_n,2}(\cdot,v_n)\|_{L_4(\Lambda(\epsilon)\setminus \Lambda^\delta(\epsilon))}^2 \,.
\end{split}
\end{equation*}
Since 
\begin{equation}
\Lambda(\epsilon)\subset D\setminus\mathcal{C}(v) \;\;\text{ and }\;\; \Lambda(\epsilon)\subset D\setminus\mathcal{C}(v_n)\ , \quad n\ge n_\epsilon\ , \label{plume2}
\end{equation}
by \eqref{z7}, we deduce from \eqref{X5}, Lemma~\ref{Eric} (with $p=4$), and the previous inequality that
$$
\|\Psi_n\|_{L_2(\Lambda(\epsilon))}^2\le \sum_{i\in J_\delta}\| \Psi_n\|_{L_2(\Lambda_i(\epsilon))}^2 + c(\kappa) \delta^{1/2}\,, \quad  n\ge n_\epsilon\,.
$$
Owing to the finiteness of $J_\delta$, we may let $n\rightarrow\infty$ in the previous inequality with the help of \eqref{z10a} and obtain that
$$
\limsup_{n\rightarrow\infty} \|\Psi_n\|_{L_2(\Lambda(\epsilon))}^2\le c(\kappa) \big(\epsilon+\delta^{1/2}\big)\,.
$$
Now, for $\epsilon_0\in (0,H)$ and $\delta\in (0,\epsilon_0)$, we have $\Lambda(\epsilon_0)\subset \Lambda(\delta)$, so that the previous estimate (with $\epsilon=\delta$) gives
$$
\limsup_{n\rightarrow\infty} \|\Psi_n\|_{L_2(\Lambda(\epsilon_0))}^2\le
\limsup_{n\rightarrow\infty} \|\Psi_n\|_{L_2(\Lambda(\delta))}^2\le c(\kappa) \big(\delta+\delta^{1/2}\big)\,.
$$
Letting $\delta\rightarrow 0$ and recalling the definition of $\Psi_n$ yield
\begin{equation}\label{z10}
\lim_{n\rightarrow\infty} \|\partial_z\psi_{v,2}(\cdot,v)-\partial_z\psi_{v_n,2}(\cdot,v_n)\|_{L_2(\Lambda(\epsilon_0))}^2=0\,,\quad \epsilon_0\in (0, H)\,.
\end{equation}
Next, let $\epsilon\in (0, H)$. The transmission condition \eqref{a11b} ensures
$$
\partial_z\psi_{v_n,2}(x,v_n(x))=\frac{\sigma_1(x,-H)}{\sigma_2}\partial_z\psi_{v_n,1}(x,-H)+\int_{-H}^{v_n(x)}\partial_z^2\psi_{v_n,2}(x,z)\,\rd z
$$
for $x\in D\setminus \mathcal{C}(v_n)$ and $n\ge 1$, from which we derive
\begin{equation*}
\begin{split}
\int_{D\setminus(\Lambda(\epsilon) \cup \mathcal{C}(v_n))}&\left\vert \partial_z\psi_{v_n,2}(x,v_n(x))-\frac{\sigma_1(x,-H)}{\sigma_2}\partial_z\psi_{v_n,1}(x,-H)\right\vert^2\,\rd x \\
&
\le   \int_{D\setminus (\Lambda(\epsilon) \cup \mathcal{C}(v_n))} (v_n(x)+H) \int_{-H}^{v_n(x)}\vert \partial_z^2\psi_{v_n,2}(x,z)\vert^2\,\rd z\rd x\,.
\end{split}
\end{equation*}
Thanks to \eqref{z7},
$$
v_n(x)+H\le v(x)+\epsilon+H \le 2\epsilon\,,\qquad x\in D\setminus \Lambda(\epsilon)\,,\quad n\ge n_\epsilon\,,
$$
so that, using \eqref{king3},
\begin{equation}\label{z11}
\int_{D\setminus (\Lambda(\epsilon) \cup \mathcal{C}(v_n))}\left\vert \partial_z\psi_{v_n,2}(x,v_n(x)) - \frac{\sigma_1(x,-H)}{\sigma_2}\partial_z\psi_{v_n,1}(x,-H)\right\vert^2\,\rd x 
\le   c(\kappa) \epsilon
\end{equation}
for $n\ge n_\epsilon$. Furthermore, \eqref{sigmamin}, Corollary~\ref{Pam}, and the continuity of the trace on $H^1(\Omega_1)$ ensure that
\begin{equation}\label{z12}
\lim_{n\rightarrow\infty}\int_{D}\left\vert \frac{\sigma_1(x,-H)}{\sigma_2}\partial_z\psi_{v_n,1}(x,-H) - \frac{\sigma_1(x,-H)}{\sigma_2}\partial_z\psi_{v,1}(x,-H)\right\vert^2\,\rd x =0\,.
\end{equation}
Now, recalling the definition of $\ell(v)$ on $\mathcal{C}(v)$,
\begin{align*}
&\int_{(D\setminus \Lambda(\epsilon))\cap \mathcal{C}(v_n)} \left\vert \frac{\sigma_1(x,-H)}{\sigma_2} \partial_z\psi_{v_n,1}(x,-H)-\ell(v)(x)\right\vert^2\, \rd x\\
&\le 2 \int_{(D\setminus \Lambda(\epsilon))\cap \mathcal{C}(v_n)} \left\vert \frac{\sigma_1(x,-H)}{\sigma_2} \big[ \partial_z\psi_{v_n,1}(x,-H) - \partial_z\psi_{v,1}(x,-H) \big]\right\vert^2\, \rd x\\
& \qquad + 2 \int_{(D\setminus \Lambda(\epsilon))\cap \mathcal{C}(v_n)} \left\vert \frac{\sigma_1(x,-H)}{\sigma_2} \partial_z\psi_{v,1}(x,-H)-\ell(v)(x)\right\vert^2\, \rd x\\
& \le 2 \int_{(D\setminus \Lambda(\epsilon))\cap \mathcal{C}(v_n)} \left\vert \frac{\sigma_1(x,-H)}{\sigma_2} \big[ \partial_z\psi_{v_n,1}(x,-H) - \partial_z\psi_{v,1}(x,-H) \big]\right\vert^2\, \rd x + 2 \mathcal{Q}_\epsilon\,,
\end{align*}
with
\begin{equation*}
\mathcal{Q}_\epsilon := \int_{D\setminus (\Lambda(\epsilon)\cup \mathcal{C}(v))} \left\vert \frac{\sigma_1(x,-H)}{\sigma_2} \partial_z\psi_{v,1}(x,-H)-\ell(v)(x)\right\vert^2\, \rd x\,.
\end{equation*}
Hence, owing to \eqref{z12},
\begin{equation}\label{morcheeba1}
\limsup_{n\rightarrow\infty} \int_{(D\setminus \Lambda(\epsilon))\cap \mathcal{C}(v_n)} \left\vert \frac{\sigma_1(x,-H)}{\sigma_2}\partial_z\psi_{v_n,1}(x,-H)-\ell(v)(x)\right\vert^2\, \rd x \le 2 \mathcal{Q}_\epsilon\,.
\end{equation}
In addition,
\begin{equation}\label{morcheeba2}
\int_{D\setminus (\Lambda(\epsilon)\cup \mathcal{C}(v_n))} \left\vert \frac{\sigma_1(x,-H)}{\sigma_2}\partial_z\psi_{v,1}(x,-H)-\ell(v)(x)\right\vert^2\, \rd x\le \mathcal{Q}_\epsilon\,.
\end{equation}
Using the disjoint union
\begin{equation*}
D = \Lambda(\epsilon) \cup \left[ (D\setminus\Lambda(\epsilon)) \cap \mathcal{C}(v_n)\right] \cup \left[ D \setminus (\Lambda(\epsilon) \cup \mathcal{C}(v_n))\right]
\end{equation*}
and recalling \eqref{z7} and the definition of $\ell$, we obtain that, for $n\ge n_\epsilon$,
\begin{align*}
& \left\| \ell(v_n)-\ell(v) \right\|_{L_2(D)}^2 \\
&\quad\le 3\int_{D\setminus (\Lambda(\epsilon)\cup \mathcal{C}(v_n))}  \left\vert \partial_z\psi_{v_n,2}(x,v_n(x))-\frac{\sigma_1(x,-H)}{\sigma_2}\partial_z\psi_{v_n,1}(x,-H)\right\vert^2\,\rd x\\
&\qquad + 3 \int_{D\setminus (\Lambda(\epsilon)\cup \mathcal{C}(v_n))}  \left\vert \frac{\sigma_1(x,-H)}{\sigma_2}\partial_z\psi_{v_n,1}(x,-H)-\frac{\sigma_1(x,-H)}{\sigma_2}\partial_z\psi_{v,1}(x,-H)\right\vert^2\,\rd x\\
&\qquad + 3 \int_{D\setminus (\Lambda(\epsilon)\cup \mathcal{C}(v_n))}  \left\vert \frac{\sigma_1(x,-H)}{\sigma_2}\partial_z\psi_{v,1}(x,-H)-\ell(v)(x)\right\vert^2\,\rd x\\
& \qquad +\int_{(D\setminus \Lambda(\epsilon))\cap \mathcal{C}(v_n)}  \left\vert \frac{\sigma_1(x,-H)}{\sigma_2} \partial_z\psi_{v_n,1}(x,-H) - \ell(v)(x)\right\vert^2\,\rd x\\
&\qquad + \int_{\Lambda(\epsilon)} \left\vert \partial_z\psi_{v_n,2}(x,v_n(x))-\partial_z\psi_{v,2}(x,v(x))\right\vert^2\,\rd x\,.
\end{align*}
It then follows from \eqref{z10}-\eqref{morcheeba2} that
\begin{equation}
\limsup_{n\rightarrow\infty} \left\| \ell(v_n)-\ell(v) \right\|_{L_2(D)}^2 \le c(\kappa) \epsilon + 5\mathcal{Q}_\epsilon\,. \label{plume3}
\end{equation}
At this point, we observe that 
\begin{equation*}
\lim_{\epsilon\rightarrow 0} \left|D\setminus (\Lambda(\epsilon)\cup \mathcal{C}(v))\right| = \lim_{\epsilon\to 0} \left|\{x\in D\,:\, -H<v(x)< -H+\epsilon\}\right| = 0\ ,
\end{equation*}
so that, since both $x\mapsto \sigma_1(x,-H) \partial_z \psi_{v,1}(x,-H)$ and $\ell(v)$ belong to $L_2(D)$, 
\begin{equation*}
\lim_{\epsilon\rightarrow 0} \mathcal{Q}_\epsilon = 0\,.
\end{equation*}
We then take the limit $\epsilon\rightarrow 0$ in \eqref{plume3} to conclude that
\begin{equation*}
\lim_{n\rightarrow\infty} \left\| \ell(v_n)-\ell(v) \right\|_{L_2(D)}^2 = 0\,.
\end{equation*}
Finally, $(\ell(v_n))_{n\ge 1}$ is bounded in $L_p(D)$ for any $p\in [1,\infty)$ by the trace theorem for $H^1(\Omega_1)$, \eqref{sigmamin}, the $H^2$-estimate \eqref{king3} on $(\psi_{v_n,1})_{n\ge 1}$, and Lemma~\ref{Eric}. Combining this bound with the previous convergence implies the convergence in $L_p(D)$ for $p\in [1,\infty)$ as stated in \eqref{102}.
\end{proof}

\begin{remark}
The proofs of Proposition~\ref{C3} and Proposition~\ref{ACDC1} greatly simplify when the sequence $(v_n)_{n\ge 1}$ decreases monotically to $v$. Indeed, in that case, $\Omega(v)\subset \Omega(v_n)$ for all $n\ge 1$ and, for instance, it is possible to use $\partial_z \psi_{v_n,2}(x,v(x))$ in the computations, since it is well-defined.
\end{remark}

\section{Shape Derivative of the Dirichlet Energy}\label{SDDE}

In order to compute the shape derivative of the Dirichlet energy defined in \eqref{DE}, the first step is to investigate the differentiability 
properties of $\psi_v$ with respect to $v\in S$. 

\begin{lemma}\label{P316}
Let $u\in S$ be fixed and define, for $v\in S$, the transformation 
$$
\Theta_v=(\Theta_{v,1},\Theta_{v,2}):\Omega(u)\rightarrow \Omega(v)
$$  
by 
\begin{subequations}\label{tr}
\begin{align}
&\Theta_{v,1}(x,z):=(x,z)\,,& &(x,z)\in\Omega_1\,,\\
& \Theta_{v,2}(x,z):=\left(x,z+\frac{v(x)-u(x)}{H+u(x)}(z+H)\right)\,,& &(x,z)\in\Omega_2(u) \,.
\end{align}
\end{subequations}
Then there exists a neighborhood $U$ of $u$ in $S$ such that
$$
U\rightarrow H_0^1(\Omega(u)),\quad v\mapsto \xi_v:=\chi_v\circ \Theta_v
$$
is continuously differentiable, where $\chi_v=\psi_v-h_v\in H_0^1(\Omega(v))$ and $S$ is endowed with the $H^2(D)$-topology.
\end{lemma}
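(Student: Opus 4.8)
The plan is to transport the free-boundary problem onto the fixed domain $\Omega(u)$ and then apply the implicit function theorem to an equation of the form $F(v,\xi)=0$. First I would record the geometry of $\Theta_v$. Since $u\in S$ with $u\in H^2(D)$, the one-dimensional embedding $H^2(D)\hookrightarrow C^1(\bar D)$ together with $u(\pm L)=0$ and $u>-H$ in $D$ shows that $H+u$ is bounded below by a positive constant on $\bar D$. Hence there is a neighborhood $U$ of $u$ in $S$ (in the $H^2(D)$-topology) on which $H+v$ stays uniformly away from $0$ and $\|v\|_{C^1(\bar D)}$ remains bounded. For such $v$, writing $\Theta_{v,2}(x,z)=(x,\gamma_v(x,z))$, one checks that $(H+\gamma_v)/(H+v(x))=(H+z)/(H+u(x))$, so $\Theta_{v,2}$ is triangular with $\det D\Theta_{v,2}=(H+v)/(H+u)>0$; consequently $\Theta_v$ is a bi-Lipschitz homeomorphism of $\Omega(u)$ onto $\Omega(v)$, equal to $\mathrm{id}$ on $\Omega_1$, carrying $\partial\Omega(u)$ onto $\partial\Omega(v)$, so that $\xi_v=\chi_v\circ\Theta_v\in H_0^1(\Omega(u))$.

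Next I would pull back the variational identity characterizing $\chi_v$. Changing variables by $\Theta_v$ in the weak form from Proposition~\ref{P1}, and using that $\sigma\circ\Theta_v=\sigma$ (identity on $\Omega_1$, where $\sigma_1$ lives, and $\sigma_2$ constant on $\Omega_2(u)$), the function $\xi_v$ is characterized by
\begin{equation*}
\int_{\Omega(u)}\sigma\,A_v\nabla\xi_v\cdot\nabla\hat\phi\,\rd(x,z)=-\int_{\Omega(u)}\sigma\,A_v\nabla H_v\cdot\nabla\hat\phi\,\rd(x,z)\,,\qquad \hat\phi\in H_0^1(\Omega(u))\,,
\end{equation*}
where $A_v:=|\det D\Theta_v|\,(D\Theta_v)^{-1}(D\Theta_v)^{-T}$ and $H_v:=h_v\circ\Theta_v$, with $A_v=\mathrm{id}$ on $\Omega_1$. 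I would then define $F:U\times H_0^1(\Omega(u))\to H^{-1}(\Omega(u))$ by
\begin{equation*}
\langle F(v,\xi),\hat\phi\rangle:=\int_{\Omega(u)}\sigma\,A_v\,(\nabla\xi+\nabla H_v)\cdot\nabla\hat\phi\,\rd(x,z)\,,
\end{equation*}
so that $F(v,\xi_v)=0$. Since $A_v\to\mathrm{id}$ uniformly as $v\to u$, shrinking $U$ makes $A_v$ uniformly positive definite, whence $F(v,\cdot)$ has a unique zero by the Lax--Milgram theorem.

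It then remains to verify the hypotheses of the implicit function theorem. The dependence of $F$ on $\xi$ is affine, and $\partial_\xi F(v,\xi)[\eta]$ is the bilinear form $\hat\phi\mapsto\int_{\Omega(u)}\sigma A_v\nabla\eta\cdot\nabla\hat\phi$; at $v=u$ one has $A_u=\mathrm{id}$, so $\partial_\xi F(u,\xi_u)$ is the coercive form of the transmission operator and is an isomorphism of $H_0^1(\Omega(u))$ onto $H^{-1}(\Omega(u))$ by Lax--Milgram, using $\sigma\ge\sigma_{min}>0$ and Poincar\'e's inequality. For the $C^1$-regularity of $F$, the entries of $A_v$ are rational functions of $(v,\partial_x v)$ and of the coordinate $z$ whose denominators are positive powers of $H+u$ and $H+v$; since $v\mapsto(v,\partial_x v)$ is bounded linear from $H^2(D)$ into $C(\bar D)^2$ and $H+v$ is bounded below on $U$, the map $v\mapsto A_v$ is $C^\infty$ from $U$ into $C(\overline{\Omega(u)})\hookrightarrow L_\infty(\Omega(u))$. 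Likewise, on $\Omega_2(u)$ one has $H_v(x,z)=h_2(x,\gamma_v(x,z),v(x))$, and a direct computation expresses $\nabla H_v$ through the first derivatives of $h$ evaluated at $(x,\gamma_v,v)$ times factors depending on $(v,\partial_x v)$; as $h\in C^2$ and $v\mapsto(v,\partial_x v)\in C(\bar D)^2$ is continuous, $v\mapsto\nabla H_v$ is $C^1$ from $U$ into $C(\overline{\Omega(u)})^2\hookrightarrow L_2(\Omega(u))^2$. Hence the forms defining $F$ depend in a $C^1$ manner on $v$, so $F\in C^1(U\times H_0^1(\Omega(u)),H^{-1}(\Omega(u)))$.

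With these ingredients the implicit function theorem provides a possibly smaller neighborhood of $u$ and a unique $C^1$ map $v\mapsto\Xi(v)\in H_0^1(\Omega(u))$ with $F(v,\Xi(v))=0$ and $\Xi(u)=\xi_u$; by the Lax--Milgram uniqueness noted above, $\Xi(v)=\xi_v$, which proves that $v\mapsto\xi_v$ is continuously differentiable. The step I expect to be the main obstacle is the $C^1$-regularity of the Nemytskii-type maps $v\mapsto A_v$ and $v\mapsto H_v$ into $L_\infty(\Omega(u))$ and $H^1(\Omega(u))$: this is precisely where one exploits the one-dimensional embedding $H^2(D)\hookrightarrow C^1(\bar D)$ (so that $v$ and $\partial_x v$ are genuine continuous functions entering smooth compositions), the uniform positivity of $H+u$ guaranteed by $u\in S$ (which fails for $u\in\bar S\setminus S$ and forces the restriction to $S$), and the assumed $C^2$-regularity of $h$.
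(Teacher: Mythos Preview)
Your proposal is correct and follows essentially the same route as the paper: transport the problem to the fixed domain $\Omega(u)$ via $\Theta_v$, rewrite the variational characterization of $\chi_v$ as an equation $F(v,\xi)=0$ in $H^{-1}(\Omega(u))$ with $F\in C^1$, verify that $\partial_\xi F(u,\xi_u)$ is an isomorphism by Lax--Milgram, and apply the implicit function theorem.

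The one point worth noting is the final identification of the implicit function $\Xi(v)$ with $\xi_v$. You argue that $F(v,\cdot)=0$ has a \emph{globally} unique zero in $H_0^1(\Omega(u))$ by Lax--Milgram (once $A_v$ is uniformly elliptic), so $\Xi(v)$ and $\xi_v$ must coincide. The paper instead invokes Corollary~\ref{C3} (the $\Gamma$-convergence result) to obtain continuity of $v\mapsto\xi_v$ at $u$, thereby placing $\xi_v$ in the neighborhood where the implicit function theorem's \emph{local} uniqueness applies. Your argument is the cleaner of the two here: global uniqueness makes the appeal to Corollary~\ref{C3} unnecessary at this stage.
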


\begin{proof}
We follow the lines of the proof of \cite[Theorem~5.3.2]{HP05}.  Recall that $\chi_v \in H_0^1(\Omega(v))$ satisfies the integral identity
\begin{equation}\label{w1}
\int_{\Omega(v)}\sigma\nabla\chi_v\cdot\nabla \theta\,\rd(\bar x,\bar z)
=-\int_{\Omega(v)}\sigma\nabla h_v\cdot\nabla \theta\,\rd(\bar x,\bar z)\,,\quad \theta\in H_0^1(\Omega(v))\,,
\end{equation}
which we next shall write as integrals over $\Omega(u)$. To this end, we first note that
$$
\xi_u=\chi_u\,,\qquad \nabla\xi_v=D\Theta_v^T\nabla\chi_v\circ \Theta_v\,,
$$
where $D\Theta_{v,1}=\mathrm{id}$ and
$$
 D\Theta_{v,2}(x,z)=\left(\begin{matrix}  1&0\\
\\
\displaystyle{(z+H)\partial_x\left(\frac{v-u}{H+u}\right)(x)} & \displaystyle{\frac{H+v(x)}{H+u(x)} }\end{matrix}\right)\,,\quad (x,z)\in \Omega_2(u)\,.
$$
For $\phi\in H_0^1(\Omega(u))$ we have
$$
\phi_v:=\phi\circ \Theta_v^{-1}\in H_0^1(\Omega(v))
$$
with  
$$
\nabla\phi_v=\big((D\Theta_v^T)^{-1}\nabla\phi\big)\circ \Theta_v^{-1}\,.
$$
Performing the change of variables $(\bar x,\bar z)=\Theta_v(x,z)$ in \eqref{w1} with $\theta=\phi_v$ gives
\begin{equation}\label{w6}
\begin{split}
\int_{\Omega(u)} J_v\,\sigma\,  &(D\Theta_v)^{-1} (D\Theta_v^T)^{-1}\nabla\xi_v\cdot\nabla\phi\,\rd (x,z)\\
&=  
-\int_{\Omega(u)} J_v\, (D\Theta_v)^{-1}(\sigma\nabla h_v)\circ \Theta_v\cdot\nabla\phi\,\rd (x,z)\,,
\end{split}
\end{equation}
where we used $\sigma\circ \Theta_v=\sigma$ due to $\Theta_{v,1}\equiv \mathrm{id}_{\Omega_1}$ and $\sigma_2=const$, and where $J_v:=|\mathrm{det}(D\Theta_v)|$ is
\begin{equation}\label{Jv}
J_{v,1}=1\,,\qquad  J_{v,2}=\frac{H+v}{H+u}\,.
\end{equation}
Introducing the notations
$$
A(v):=J_v\,\sigma\,  (D\Theta_v)^{-1} (D\Theta_v^T)^{-1}
$$
and 
$$
B(v):=\mathrm{div}\big(J_v\, (D\Theta_v)^{-1}(\sigma\nabla h_v)\circ \Theta_v\big)\,,
$$
we define the function 
$$
F: S\times H_0^1(\Omega(u))\rightarrow H^{-1}(\Omega(u))\,,\quad (v,\xi)\mapsto -\mathrm{div}\big(A(v)\nabla\xi\big)-B(v)
$$
and observe that \eqref{w6} is equivalent to 
\begin{equation}\label{w7}
F(v,\xi_v)=0\,,\quad v\in S\,.
\end{equation}
We then shall use the implicit function theorem to derive that $\xi_v$ depends smoothly on $v$.
For that purpose, let us first show that $F$ is Fr\'echet differentiable in $S\times H_0^1(\Omega(u))$. Indeed, define $P,Q\in C(\bar D\times\R,\R^3)$ by
$$
P(x,z):=\left(x,z-\frac{u(x)}{H+u(x)}(z+H),0\right)\,,\quad Q(x,z):=\left(0,\frac{H+z}{H+u(x)},1\right)\,,
$$ 
for $(x,z)\in \bar D\times\R$,
and note that
$$
\nabla h_{v,2} \circ \Theta_{v,2}=\left(\begin{matrix}  1&0&\partial_x v\\ 0&1&0 \end{matrix}\right) (\partial_x h_2,\partial_z h_2,\partial_w h_2)\circ (P+vQ)\,.
$$
Since $h_2$ is $C^2$-smooth, we clearly have
$$
\big[v\mapsto (\partial_x h_2,\partial_z h_2,\partial_w h_2)\circ (P+vQ)\big] \in C^1\big(S,L_2(\Omega_2(u),\R^3)\big)\,,
$$
so that, thanks to the continuous embedding of $H^2(D)$ in $W_\infty^1(D)$, we readily obtain that
$$
\big[v\mapsto\nabla h_{v,2} \circ \Theta_{v,2}\big]\in C^1\big(S,L_2(\Omega_2(u),\R^2)\big)\,.
$$
Since $\Theta_{v,1}\equiv \mathrm{id}$, a similar argument ensures that
$$
\big[v\mapsto\nabla h_{v,1} \circ \Theta_{v,1}\big]\in C^1\big(S,L_2(\Omega_1,\R^2)\big)\,,
$$
and therefore
$$
\big[v\mapsto\nabla h_{v} \circ \Theta_{v}\big]\in C^1\big(S,L_2(\Omega(u),\R^2)\big)\,.
$$
Moreover, $v\mapsto J_v$ and $v\mapsto (D\Theta_v)^{-1}$ are continuously differentiable from  $S$ to $L_\infty(\Omega(u))$ and $L_\infty(\Omega(u),\R^{2\times 2})$, respectively, and we conclude that
$$
v\mapsto J_v\, (D\Theta_v)^{-1}(\sigma\nabla h_v)\circ \Theta_v
$$
is continuously differentiable from $S$ to $L_2(\Omega(u),\R^{2})$, hence $B\in C^1(S,H^{-1}(\Omega(u)))$. The $C^1$-smoothness of $(v,\xi)\mapsto \mathrm{div}(A(v)\nabla\xi)$ is proved as in \cite[Theorem 5.3.2]{HP05} and we have indeed established 
$$
F\in C^1\big(S\times H_0^1(\Omega(u)), H^{-1}(\Omega(u))\big)\,.
$$
By  the Lax-Milgram theorem, the mapping $\zeta\mapsto\partial_\xi F(u,\chi_u)\zeta=-\mathrm{div}(\sigma\nabla\zeta)$ is bijective from $H_0^1(\Omega(u))$ to $H^{-1}(\Omega(u))$ and thus an isomorphism due to the open mapping theorem. Consequently, the implicit function theorem ensures the existence of a neighborhood $U$ of $u$ in $S$ and a function $\Xi\in C^1(U,H_0^1(\Omega(u))$ such that $\Xi(u)=\xi_u$ and $F(v,\Xi(v))=0$ for $v\in U$. By Corollary~\ref{C3}, $\xi_v\in \Xi(U)$ for $\|v-u\|_{H^2(D)}$ sufficiently small and consequently $\xi_v=\Xi(v)$ for $v\in U$ by the uniqueness provided by the implicit function theorem. 
\end{proof}

As a consequence of Lemma~\ref{P316}, we are now in a position to investigate differentiability properties of the Dirichlet energy
$$
\mathfrak{J}(u) =\frac{1}{2}\int_{\Omega(u)} \sigma \vert\nabla \psi_u\vert^2\,\rd (x,z)
$$
with respect to $u$. We begin with the case $u\in S$. At such functions, the  Dirichlet energy $\mathfrak{J}$ is Fr\'echet differentiable as shown next.

\begin{proposition}\label{C15}
Let $S$ be endowed with the $H^2(D)$-topology. Then the Dirichlet energy $\mathfrak{J}:S\rightarrow\R$ is continuously Fr\'echet differentiable with
\begin{equation*}
\begin{split}
\partial_u \mathfrak{J}(u)[\vartheta]
=& -\frac{1}{2}\int_D\sigma_2 (1+\vert\partial_x u(x)\vert^2)\,\big[\partial_z\psi_{u,2}-(\partial_z h_2)_u-(\partial_w h_2)_u\big]^2(x, u(x))\,\vartheta(x)\,\rd x\\
& +\frac{1}{2}\int_D\sigma_2 \,\big[\big((\partial_x h_2)_u\big)^2+ \big((\partial_z h_2)_u+(\partial_w h_2)_u\big)^2\big](x, u(x))\,\vartheta(x)\,\rd x\\
& -\int_D\big[\sigma_1(\partial_w h_1)_u\,\partial_z\psi_{u,1}\big](x,-H-d)\,\vartheta(x)\,\rd x
\end{split}
\end{equation*}
for $u\in S$ and $\vartheta\in H^2(D)\cap H_0^1(D)$.
\end{proposition}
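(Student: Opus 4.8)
The plan is to follow the scheme of \cite[Theorem~5.3.2]{HP05}: transport the energy onto the fixed reference domain $\Omega(u)$ via the diffeomorphism $\Theta_v$ of Lemma~\ref{P316}, differentiate there, and reduce the resulting bulk integrals to integrals over $D$. First I would perform the change of variables $(\bar x,\bar z)=\Theta_v(x,z)$ in \eqref{DE}. Writing $\Xi_v:=\psi_v\circ\Theta_v=\xi_v+h_v\circ\Theta_v$ and recalling $\sigma\circ\Theta_v=\sigma$ together with the matrix $A(v):=J_v\,\sigma\,(D\Theta_v)^{-1}(D\Theta_v^T)^{-1}$ from the proof of Lemma~\ref{P316}, this gives
\[
\mathfrak{J}(v)=\frac12\int_{\Omega(u)}A(v)\nabla\Xi_v\cdot\nabla\Xi_v\,\rd(x,z)\,,\qquad v\in U\,.
\]
By Lemma~\ref{P316}, $v\mapsto\xi_v$ is $C^1$ from the neighborhood $U$ of $u$ into $H_0^1(\Omega(u))$, while the computations in its proof show that $v\mapsto h_v\circ\Theta_v$ is $C^1$ into $H^1(\Omega(u))$ and $v\mapsto A(v)$ is $C^1$ into $L_\infty(\Omega(u),\R^{2\times2})$. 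Since $\mathfrak{J}(v)$ is a continuous quadratic expression in $\nabla\Xi_v$ with $C^1$ coefficient $A(v)$, it is continuously Fr\'echet differentiable on $U$; as $S$ is open in $H^2(D)\cap H_0^1(D)$, this already yields the asserted regularity, and it remains to identify $\partial_u\mathfrak{J}(u)[\vartheta]$.

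Next I would differentiate at $v=u$, where $\Theta_u=\mathrm{id}$, $D\Theta_u=\mathrm{id}$, $J_u=1$, $A(u)=\sigma\,\mathrm{id}$, and $\Xi_u=\psi_u$. Using the symmetry of $A$, the product rule gives
\[
\partial_u\mathfrak{J}(u)[\vartheta]=\frac12\int_{\Omega(u)}A'(u)[\vartheta]\,\nabla\psi_u\cdot\nabla\psi_u\,\rd(x,z)+\int_{\Omega(u)}\sigma\nabla\psi_u\cdot\nabla\big(\partial_v\Xi_u[\vartheta]\big)\,\rd(x,z)\,.
\]
Splitting $\partial_v\Xi_u[\vartheta]=\partial_v\xi_u[\vartheta]+\partial_v(h_v\circ\Theta_v)\big|_{v=u}[\vartheta]$ and observing that $\partial_v\xi_u[\vartheta]\in H_0^1(\Omega(u))$, the weak formulation \eqref{e1} satisfied by $\psi_u$ forces the contribution of $\partial_v\xi_u[\vartheta]$ to vanish. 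Only the $A'(u)[\vartheta]$ term and the one involving $\partial_v(h_v\circ\Theta_v)\big|_{v=u}[\vartheta]$ then survive.

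The main effort---and the principal obstacle---is the explicit evaluation of these two terms and their reduction to integrals over $D$. From \eqref{tr} and \eqref{Jv} I would compute $A'(u)[\vartheta]$; since $\Theta_{v,1}\equiv\mathrm{id}$ is independent of $v$, this matrix vanishes on $\Omega_1$, so the first term is supported on $\Omega_2(u)$ and is an explicit quadratic form in $\nabla\psi_{u,2}$ weighted by $\vartheta$ and $\partial_x u$. For the second term I would integrate by parts; because $\mathrm{div}(\sigma\nabla\psi_u)=0$ in $\Omega_1\cup\Omega_2(u)$ and the transmission conditions \eqref{a11b}, together with \eqref{bobbybrown2}--\eqref{bobbybrown3}, hold across $\Sigma$, the interior contributions and those along the interface cancel, leaving only boundary integrals over the graph $\mathfrak{G}(u)$ and over the bottom $D\times\{-H-d\}$. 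Every one of these manipulations relies on the piecewise $H^2$-regularity of $\psi_u$ and the $H^1$-regularity of $\sigma\partial_z\psi_u$ furnished by Theorem~\ref{Thm1}, which is exactly what gives well-defined traces of $\nabla\psi_u$ on the boundary pieces.

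It then remains to match the stated form. On $\mathfrak{G}(u)$ the Dirichlet condition \eqref{a11c} gives $\chi_u=\psi_u-h_u=0$, so its tangential derivative vanishes and $\partial_x\chi_{u,2}=-\partial_x u\,\partial_z\chi_{u,2}$ there, whence $|\nabla\chi_{u,2}|^2=(1+(\partial_x u)^2)(\partial_z\chi_{u,2})^2$ on the graph; this is the origin of the factor $1+(\partial_x u)^2$. Combining this with the chain-rule identity $\partial_z h_{u,2}=(\partial_z h_2)_u$ and with the identification of the remaining $v$-derivative of $h_v\circ\Theta_v$ (which contributes the $(\partial_w h_2)_u$ terms), and treating the bottom boundary analogously with $h_1$, collects the boundary integrands into the squared expression $[\partial_z\psi_{u,2}-(\partial_z h_2)_u-(\partial_w h_2)_u]^2$ and the remaining terms displayed in the statement. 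The bookkeeping of these algebraic recombinations is the only genuinely delicate part of the argument.
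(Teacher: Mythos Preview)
Your approach is essentially the paper's: transport to $\Omega(u)$ via $\Theta_v$, differentiate, and use the weak equation \eqref{e1} to kill the $\partial_v\xi_u[\vartheta]$ contribution. The paper organizes the computation through $j(v):=(D\Theta_v^T)^{-1}\nabla\xi_v+\nabla h_v\circ\Theta_v$ rather than your pair $(A(v),\Xi_v)$, but these are equivalent.

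One point deserves more than the label ``bookkeeping.'' After your integration by parts, the term
\[
\frac12\int_{\Omega_2(u)}A'(u)[\vartheta]\,\nabla\psi_{u,2}\cdot\nabla\psi_{u,2}\,\rd(x,z)
\]
is still a genuine bulk integral on $\Omega_2(u)$; it does \emph{not} collapse to boundary terms just from $\mathrm{div}(\sigma\nabla\psi_u)=0$. Reducing it requires a second, different manipulation: with the velocity $V=(0,\vartheta(z+H)/(H+u))$ one writes $\nabla\psi_{u,2}\cdot\nabla(V\cdot\nabla\psi_{u,2})=\tfrac12 V\cdot\nabla|\nabla\psi_{u,2}|^2+DV\nabla\psi_{u,2}\cdot\nabla\psi_{u,2}$ and integrates by parts once more. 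The paper carries this out explicitly (its identities leading to the graph integral $\tfrac12\int_D\sigma_2|\nabla\psi_{u,2}(x,u(x))|^2\vartheta\,\rd x$, via $\nabla\psi_{u,2}\cdot\nabla\partial_z\psi_{u,2}=\tfrac12\partial_z|\nabla\psi_{u,2}|^2$ integrated in $z$). This is precisely where the piecewise $H^2$-regularity of $\psi_u$ from Theorem~\ref{Thm1} is indispensable, and it is the computational core of the argument rather than a tidying-up step; your sketch should make that reduction explicit.
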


\begin{proof}
We use the notation from Lemma~\ref{P316}. Let $u\in S$ be fixed and recall that, with the transformation
$
\Theta_v:\Omega(u)\rightarrow \Omega(v)
$
as in \eqref{tr}, the mapping  $v\mapsto \xi_v=\chi_v\circ \Theta_v$  is differentiable with respect to $v$ in a neighborhood $U$ of $u$ in $S$ and takes values in  $H_0^1(\Omega(u))$. Now, using $\psi_v=\chi_v+h_v$ and the change of variable $(\bar x,\bar z)=\Theta_v(x,z)$ in the integral defining $\mathfrak{J}(v)$, we have
\begin{equation*}
\begin{split}
\mathfrak{J}(v)&=\frac{1}{2}\int_{\Omega(v)} \sigma \vert\nabla \psi_v\vert^2\,\rd (\bar x,\bar z)\\
&=\frac{1}{2}\int_{\Omega(u)} \sigma \big\vert(D\Theta_v^T)^{-1}\nabla\xi_v+\nabla h_v\circ\Theta_v \big\vert^2\, J_v\,\rd (x,z)
\end{split}
\end{equation*}
for $v\in U$.
Therefore, introducing
$$
j(v):=(D\Theta_v^T)^{-1}\nabla\xi_v+\nabla h_v\circ\Theta_v
$$
and recalling that $h_i$, $i=1,2$, is $C^2$ in all its arguments, we deduce that the Fr\'echet derivative of $\mathfrak{J}$ at $u$ applied to $\vartheta\in H^2(D)\cap H_0^1(D)$ is
\begin{equation*}
\begin{split}
\partial_u\mathfrak{J}(u)[\vartheta]=\partial_v\mathfrak{J}(v)[\vartheta]\big\vert_{v=u}=& \int_{\Omega(u)} \sigma  j(u)\cdot (\partial_v j(v))[\vartheta]\big\vert_{v=u}\, J_u\,\rd (x,z)\\
&  +\frac{1}{2}\int_{\Omega(u)} \sigma \vert j(u)\vert^2\, (\partial_v J_v)[\vartheta]\big\vert_{v=u}\,\rd (x,z)\,.
\end{split}
\end{equation*}
Taking the identity $j(u)=\nabla\psi_u$ into account, we infer from \eqref{Jv} that
\begin{equation}\label{D1}
\begin{split}
\partial_u\mathfrak{J}(u)[\vartheta]=& \int_{\Omega(u)} \sigma  \nabla\psi_u\cdot \big(\partial_v j(v)[\vartheta]\big\vert_{v=u}\big)\,\rd (x,z)\\
& +\frac{1}{2}\int_{\Omega_2(u)} \sigma_2 \vert \nabla\psi_{u,2}\vert^2\, \frac{\vartheta}{H+u}\,\rd (x,z)\,.
\end{split}
\end{equation}
We next use that $\Theta_u$ is the identity on $\Omega(u)$ and that $\xi_u=\chi_u$ to compute from the definition of $j(v)$ that
\begin{equation}\label{D1a}
\begin{split}
\partial_v j(v)[\vartheta]\big\vert_{v=u}= &-\partial_v (D\Theta_v^T)[\vartheta]\big\vert_{v=u} \nabla \chi_u + \partial_v (\nabla\xi_v)[\vartheta]\big\vert_{v=u}\\
& +\partial_v (\nabla h_v\circ \Theta_v)[\vartheta]\big\vert_{v=u}\,.
\end{split}
\end{equation}
On the one hand, since $\Theta_{v,1}$ is independent of $v$ and $\xi_{v,1}=\chi_{v,1}$, we readily obtain in $\Omega_1$ that
\begin{equation}\label{D2}
\begin{split}
\partial_v j(v)[\vartheta]\big\vert_{v=u}= \nabla \big(\partial_v\chi_v[\vartheta]\big\vert_{v=u}\big)+\nabla\big((\partial_wh)_u\vartheta\big)\quad \text{ in }\ \Omega_1\,,
\end{split}
\end{equation}
where $(\partial_wh)_u=\partial_w h(\cdot,\cdot,u)$. On the other hand, in $\Omega_2(u)$ we have
\begin{equation}\label{D3}
-\partial_v (D\Theta_v^T)[\vartheta]\big\vert_{v=u} \nabla \chi_u=-\partial_z\chi_{u}\nabla\left(\frac{\vartheta(z+H)}{H+u}\right) \quad \text{ in }\ \Omega_2(u)
\end{equation}
and 
\begin{equation}\label{D4}
\partial_v (\nabla\xi_v)[\vartheta]\big\vert_{v=u}=\nabla \big(\partial_v \xi_v[\vartheta]\big\vert_{v=u}\big)\quad \text{ in }\ \Omega_2(u)\,.
\end{equation}
Moreover,
\begin{equation}\label{D5}
\partial_v (\nabla h_v\circ \Theta_v)[\vartheta]\big\vert_{v=u}=\nabla\big((\partial_wh)_u\vartheta\big)+\frac{\vartheta(z+H)}{H+u} \nabla\big((\partial_zh)_u\big)\quad \text{ in }\ \Omega_2(u)\,.
\end{equation}
Consequently, gathering \eqref{D1}-\eqref{D5}, we derive
\begin{equation}\label{D6}
\begin{split}
\partial_u\mathfrak{J}(u)[\vartheta]=& \int_{\Omega(u)} \sigma\,  \nabla\psi_u\cdot \nabla\Big(  \partial_v \xi_v[\vartheta]\big\vert_{v=u} + (\partial_wh)_u\vartheta\Big)\,\rd (x,z)\\
&  -\int_{\Omega_2(u)} \sigma_2\, \partial_z\chi_{u,2}\,  \nabla\psi_{u,2}\cdot \nabla \left(\frac{\vartheta(z+H)}{H+u}\right)\,\rd (x,z)\\
&  +\int_{\Omega_2(u)} \sigma_2\,  \frac{\vartheta(z+H)}{H+u}\, \nabla\psi_{u,2}\cdot \nabla \big((\partial_zh_2)_u\big) \,\rd (x,z)\\
&  +\frac{1}{2}\int_{\Omega_2(u)} \sigma_2\,  \vert \nabla\psi_{u,2}\vert^2 \,  \frac{\vartheta}{H+u} \,\rd (x,z)\,,
\end{split}
\end{equation}
and it remains to simplify the four integrals above. As for the first one we use \eqref{a11a}, \eqref{a11b}, and Gau\ss ' theorem to get
\begin{equation*}
\begin{split}
\int_{\Omega(u)} \sigma\,  \nabla&\psi_u\cdot \nabla\Big(  \partial_v \xi_v[\vartheta]\big\vert_{v=u} + (\partial_wh)_u\vartheta\Big)\,\rd (x,z) \\
=& \int_{\partial\Omega(u)} \Big(  \partial_v \xi_v[\vartheta]\big\vert_{v=u} + (\partial_wh)_u\vartheta\Big)\sigma\nabla\psi_u\cdot  {\bf n}_{\partial\Omega(u)}\,\rd S\\
&+\int_\Sigma\llbracket \partial_v \xi_v[\vartheta]\big\vert_{v=u} + (\partial_wh)_u\vartheta\rrbracket\, \sigma_1\,\partial_z\psi_{u,1}\,\rd S\,.
\end{split}
\end{equation*}
First note that the integral on $\Sigma$ vanishes. Indeed, since $\xi_{v,i}(x,-H)=\chi_{v,i}(x,-H)$ for $x\in D$ and $i=1,2$, we have $\llbracket\xi_v\rrbracket=0$ on $\Sigma$ by \eqref{bobbybrown4} and \eqref{a11b}, hence $\llbracket \partial_v\xi_v[\vartheta]\rrbracket=0$ on $\Sigma$. Similarly $\llbracket (\partial_w h)_u\rrbracket=0$ on $\Sigma$ owing to \eqref{bobbybrown2}.
Next, since $\partial_v \xi_v[\vartheta]\big\vert_{v=u}$ belongs to $H_0^1(\Omega(u))$ according to Proposition~\ref{P316}, it vanishes on the boundary $\partial\Omega(u)$. Moreover, since $\vartheta\in H_0^1(D)$, the term $(\partial_wh)_u\vartheta$ vanishes on the lateral parts of $\partial\Omega(u)$,
hence
\begin{equation*}
\begin{split}
\int_{\Omega(u)} &\sigma\,  \nabla\psi_u\cdot \nabla\Big(  \partial_v \xi_v[\vartheta]\big\vert_{v=u} + (\partial_wh)_u\vartheta\Big)\,\rd (x,z)
\\
=& \int_D\sigma_2 \partial_w h_2(x,u(x),u(x))\,\vartheta(x)\,\big(-\partial_x u(x)\partial_x\psi_{u,2}(x,u(x))+\partial_z\psi_{u,2}(x, u(x))\big)\,\rd x\\
& -\int_D\sigma_1(x,-H-d) \partial_w h_1(x,-H-d,u(x))\,\vartheta(x)\,\partial_z\psi_{u,1}(x, -H-d)\big)\,\rd x
\,.
\end{split}
\end{equation*}
Moreover, since
\begin{equation}\label{lop1}
(\partial_zh)_u=\partial_z\psi_u(\cdot,u)-\partial_z\chi_u(\cdot,u) \,,
\end{equation}
 we can write the second and the third integral in \eqref{D6} as
\begin{equation}\label{D10}
\begin{split}
-\int_{\Omega_2(u)} \sigma_2\, & \nabla\psi_{u,2}\cdot \left[\partial_z\chi_{u,2}\,\nabla \left(\frac{\vartheta(z+H)}{H+u}\right)  -  \frac{\vartheta(z+H)}{H+u}\, \nabla \big((\partial_zh_2)_u\big)\right] \,\rd (x,z)\\
 =&-\int_{\Omega_2(u)} \sigma_2\,  \nabla\psi_{u,2}\cdot \nabla\left(\partial_z\chi_{u,2}\,\frac{\vartheta(z+H)}{H+u}\right)\,\rd (x,z)\\
& + \int_{\Omega_2(u)}\sigma_2\nabla\psi_{u,2}\cdot \nabla(\partial_z\psi_{u,2})\,\frac{\vartheta(z+H)}{H+u}\,\rd (x,z)\,.
\end{split}
\end{equation}
For the first integral on the right-hand side of \eqref{D10} we use  \eqref{a11a} and Gauss' theorem 
and readily obtain, noticing that $(x,z)\mapsto (z+H)\vartheta(x)$ vanishes on all parts of the boundary $\partial\Omega_2(u)$ except on $\mathfrak{G}(u)$ and using \eqref{lop1}, that
\begin{equation}\label{D11}
\begin{split}
-\int_{\Omega_2(u)} \sigma_2\,  &\nabla\psi_{u,2}\cdot \nabla\left(\partial_z\chi_{u,2}\,\frac{\vartheta(z+H)}{H+u}\right)\,\rd (x,z) \\
& =-\int_D\sigma_2\vartheta(x)\,\big(\partial_z\psi_{u,2}(x, u(x))-(\partial_z  h)_u(x,u(x))\big)\,\\
&\qquad\qquad\qquad \times \big( -\partial_x u(x)\partial_x\psi_{u,2}(x, u(x))  +\partial_z\psi_{u,2}(x, u(x))\big)\,\rd x\,.
\end{split}
\end{equation}
The second integral on the right-hand side of \eqref{D10} is written in the alternative form
\begin{equation*}
\begin{split}
 \int_{\Omega_2(u)}&\sigma_2\nabla\psi_{u,2}\cdot \nabla(\partial_z\psi_{u,2})\,\frac{\vartheta(z+H)}{H+u}\,\rd (x,z) =  \frac{1}{2}\int_{\Omega_2(u)} \sigma_2\, \partial_z\vert\nabla\psi_{u,2}\vert^2\,\frac{\vartheta(z+H)}{H+u}\rd (x,z)\,.
\end{split}
\end{equation*}
We then integrate with respect to $z$ to obtain
\begin{equation}\label{D12}
\begin{split}
  \int_{\Omega_2(u)}&\sigma_2\nabla\psi_{u,2}\cdot \nabla(\partial_z\psi_{u,2})\,\frac{\vartheta(z+H)}{H+u}\,\rd (x,z)\\
&=  -\frac{1}{2}\int_{\Omega_2(u)} \sigma_2\,\vert\nabla\psi_{u,2}\vert^2\frac{\vartheta}{H+u}\rd (x,z)+ \frac{1}{2}\int_{D}\sigma_2 \vert\nabla\psi_{u,2}(x,u(x))\vert^2\,\vartheta(x)\,\rd x\,.
\end{split}
\end{equation}
Consequently, substituting  \eqref{D10}-\eqref{D12} in \eqref{D6}, we conclude that 
\begin{equation*}
\begin{split}
\partial_u \mathfrak{J}(u)[\vartheta]
=& -\int_D\sigma_2 \big[\partial_z\psi_{u,2}-(\partial_z h_2)_u-(\partial_w h_2)_u\big](x, u(x))\\
&\qquad\qquad \times\big[-\partial_x u\partial_x\psi_{u,2}+\partial_z\psi_{u,2}\big](x, u(x))\,\vartheta(x)\,\rd x\\
&+\frac{1}{2}\int_D\sigma_2\,\big\vert\nabla\psi_{u,2}(x,u(x))\big\vert^2\,\vartheta(x)\,\rd x\\
& -\int_D\big[\sigma_1(\partial_w h_1)_u\,\partial_z\psi_{u,1}\big](x,-H-d)\,\vartheta(x)\,\rd x\,.
\end{split}
\end{equation*}
It remains to rewrite the first two integrals on $\mathfrak{G}(u)$. For that purpose, it follows from \eqref{a11c} that
$$
\psi_{u,2}(x,u(x))=h_{u,2}(x,u(x))=h_{2}(x,u(x),u(x))\,,\quad x\in D\,,
$$
from which we deduce that
\begin{equation*}
\begin{split}
\partial_x\psi_{u,2}(x,u(x)) =& - \partial_x u(x) \big[\partial_z\psi_{u,2}-(\partial_z h_2)_u-(\partial_w h_2)_u\big](x, u(x))\\
&+ (\partial_x h_{2})_u (x, u(x))\,.
\end{split}
\end{equation*}
Using the above identity, it is easy to check that
\begin{equation*}
\begin{split}
\frac{1}{2}&\big\vert\nabla\psi_{u,2}(x,u(x))\big\vert^2-\big[\partial_z\psi_{u,2}-(\partial_z h_2)_u-(\partial_w h_2)_u\big]\big[-\partial_x u\partial_x\psi_{u,2}+\partial_z\psi_{u,2}\big](x, u(x))\\
& = -\frac{1}{2}(1+\vert\partial_x u(x)\vert^2)\,\big[\partial_z\psi_{u,2}-(\partial_z h_2)_u-(\partial_w h_2)_u\big]^2(x, u(x))\\
&\quad +\frac{1}{2}\big[\big((\partial_x h_2)_u\big)^2+ \big((\partial_z h_2)_u+(\partial_w h_2)_u\big)^2\big](x, u(x))\,,
\end{split}
\end{equation*}
so that we end up with
\begin{equation*}
\begin{split}
\partial_u \mathfrak{J}(u)[\vartheta]
=& -\frac{1}{2}\int_D\sigma_2 (1+\vert\partial_x u(x)\vert^2)\,\big[\partial_z\psi_{u,2}-(\partial_z h_2)_u-(\partial_w h_2)_u\big]^2(x, u(x))\,\vartheta(x)\,\rd x\\
& +\frac{1}{2}\int_D\sigma_2 \,\big[\big((\partial_x h_2)_u\big)^2+ \big((\partial_z h_2)_u+(\partial_w h_2)_u\big)^2\big](x, u(x))\,\vartheta(x)\,\rd x\\
& -\int_D\big[\sigma_1(\partial_w h_1)_u\,\partial_z\psi_{u,1}\big](x,-H-d)\,\vartheta(x)\,\rd x\,,
\end{split}
\end{equation*}
as claimed. It finally follows from Proposition~\ref{ACDC}, Corollary~\ref{Pam}, Proposition~\ref{ACDC1}, the continuity of the trace from $H^{3/4}(\Omega_1)$ to $L_2(D\times \{-H\})$, and the $C^2$-regularity on $h_1$ and $h_2$ that $\partial_u\mathfrak{J}: S\rightarrow \mathcal{L}(H^2(D)\cap H_0^1(D),\R)$ is continuous.
\end{proof}

We finish off this section by considering the differentiability properties of the Dirichlet energy $\mathfrak{J}$ at a function $u\in \bar S$. As pointed out in Theorem~\ref{Thm1b}, allowing also for non-empty coincidence sets restricts to directional derivatives  in the directions  $ -u+S$. Given $u\in\bar S$, let us recall that the function \mbox{$\mathfrak{g}(u):D\rightarrow [0,\infty)$} is given by
\begin{subequations}\label{ggg}
\begin{equation}\label{ggg2}
\mathfrak{g}(u)(x)= \frac{\sigma_2}{2} \left( 1+(\partial_x u(x))^2 \right)\,\big[ \partial_z\psi_{u,2}-(\partial_z h_2)_u-(\partial_w h_2)_u \big]^2(x, u(x))
\end{equation}
for $x\in D\setminus \mathcal{C}(u)$ and
\begin{equation}\label{ggg1}
\mathfrak{g}(u)(x) = \frac{\sigma_2}{2}\, \left[ \frac{\sigma_1}{\sigma_2} \partial_z\psi_{u,1}-(\partial_z h_2)_u-(\partial_w h_2)_u \right]^2(x, -H)
\end{equation}
for $x\in \mathcal{C}(u)$.
\end{subequations}

\begin{corollary}\label{C17}
Let $u\in\bar S$ and $w\in S$. Then
\begin{equation*}
\begin{split}
\lim_{t\rightarrow 0^+} \frac{1}{t}\big(\mathfrak{J}(&u+t(w-u))-\mathfrak{J}(u)\big)\\
= &-\int_D \mathfrak{g}(u) (w-u)\, \rd x\\
& +\frac{1}{2}\int_D\sigma_2 \,\left[ \big((\partial_x h_2)_u\big)^2+ \big((\partial_z h_2)_u+(\partial_w h_2)_u\big)^2 \right](\cdot, u)\,(w-u)\,\rd x\\
& -\int_D\big[\sigma_1(\partial_w h_1)_u\,\partial_z\psi_{u,1}\big](\cdot,-H-d)\,(w-u)\,\rd x\,.
\end{split}
\end{equation*}
Moreover, the function $\mathfrak{g}:\bar S\rightarrow L_p(D)$ is continuous for each $p\in [1,\infty)$.
\end{corollary}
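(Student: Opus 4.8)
The plan is to establish the continuity of $\mathfrak{g}$ first and then deduce the directional derivative formula from it together with the Fr\'echet differentiability on $S$ proved in Proposition~\ref{C15}. The starting point is the observation that the two branches of the definition \eqref{ggg} of $\mathfrak{g}(u)$ can be unified by means of the function $\ell(u)$ introduced in Proposition~\ref{ACDC1}. Indeed, since $u\in H^2(D)\hookrightarrow C^1(\bar D)$ attains its minimum value $-H$ on the coincidence set $\mathcal{C}(u)$, and since $u(\pm L)=0$ forces $\mathcal{C}(u)$ to lie in the interior of $D$, one has $\partial_x u=0$ on $\mathcal{C}(u)$. Recalling that $u=-H$ there and the definition of $\ell(u)$, one may then rewrite both \eqref{ggg2} and \eqref{ggg1} in the single form
\begin{equation*}
\mathfrak{g}(u)=\frac{\sigma_2}{2}\,\big(1+(\partial_x u)^2\big)\,\big[\ell(u)-(\partial_z h_2)_u(\cdot,u)-(\partial_w h_2)_u(\cdot,u)\big]^2 \quad\text{a.e. in } D\,.
\end{equation*}

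For the continuity of $\mathfrak{g}:\bar S\to L_p(D)$, I would take a sequence $u_n\to u$ in $H^2(D)$. The resulting $H^2$-bound and $H^1$-convergence allow me to invoke Proposition~\ref{ACDC1}, which yields $\ell(u_n)\to\ell(u)$ in $L_q(D)$ for every $q\in[1,\infty)$. Since $H^2(D)\hookrightarrow C^1(\bar D)$, both $\partial_x u_n\to\partial_x u$ and $u_n\to u$ hold in $C(\bar D)$; combined with the $C^2$-regularity of $h_2$ this gives the uniform convergence of $1+(\partial_x u_n)^2$ and of $(\partial_z h_2)_{u_n}(\cdot,u_n)+(\partial_w h_2)_{u_n}(\cdot,u_n)$. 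Writing the bracket as $\ell(u_n)$ minus a uniformly convergent term, squaring (using $\|f_n^2-f^2\|_{L_p}\le\|f_n-f\|_{L_{2p}}\|f_n+f\|_{L_{2p}}$), and multiplying by the uniformly bounded and convergent factor $1+(\partial_x u_n)^2$ then yields $\mathfrak{g}(u_n)\to\mathfrak{g}(u)$ in $L_p(D)$ for every $p\in[1,\infty)$, which is the claimed continuity.

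For the directional derivative, I would use that $u_t:=u+t(w-u)=(1-t)u+tw>-H$ for $t\in(0,1)$, so that $u_t\in S$ and $t\mapsto u_t$ takes values in $S$ with $u_t\to u$ in $H^2(D)$ as $t\to0^+$. Proposition~\ref{C15} shows that $t\mapsto\mathfrak{J}(u_t)$ is $C^1$ on $(0,1)$ with derivative $\partial_u\mathfrak{J}(u_t)[w-u]$, whence by the fundamental theorem of calculus $\mathfrak{J}(u_t)-\mathfrak{J}(u_s)=\int_s^t\partial_u\mathfrak{J}(u_\tau)[w-u]\,\rd\tau$ for $0<s<t<1$. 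Letting $s\to0^+$ and using the continuity $\mathfrak{J}(u_s)\to\mathfrak{J}(u)$ guaranteed by Corollary~\ref{C3}, I obtain $\mathfrak{J}(u_t)-\mathfrak{J}(u)=\int_0^t\partial_u\mathfrak{J}(u_\tau)[w-u]\,\rd\tau$, provided the integrand extends continuously to $\tau=0$. Dividing by $t$ and letting $t\to0^+$ then identifies the directional derivative with $\lim_{\tau\to0^+}\partial_u\mathfrak{J}(u_\tau)[w-u]$.

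It therefore remains to pass to the limit $\tau\to0^+$ in the three integrals making up $\partial_u\mathfrak{J}(u_\tau)[w-u]$ as given in Proposition~\ref{C15}. The first integral $\int_D\mathfrak{g}(u_\tau)(w-u)\,\rd x$ converges to $\int_D\mathfrak{g}(u)(w-u)\,\rd x$ by the continuity of $\mathfrak{g}$ just established (note that $\mathfrak{g}(u_\tau)$ is given by \eqref{ggg2} on all of $D$, since $\mathcal{C}(u_\tau)=\emptyset$). The $h_2$-term converges through the uniform convergence of its compositions. The main obstacle is the boundary integral $\int_D[\sigma_1(\partial_w h_1)_{u_\tau}\partial_z\psi_{u_\tau,1}](\cdot,-H-d)(w-u)\,\rd x$, which requires convergence of the trace of $\partial_z\psi_{u_\tau,1}$ on the bottom face $z=-H-d$. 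Here I would invoke Corollary~\ref{Pam}, giving $\psi_{u_\tau,1}\rightharpoonup\psi_{u,1}$ in $H^2(\Omega_1)$, upgrade this via the compact embedding $H^2(\Omega_1)\hookrightarrow\hookrightarrow H^s(\Omega_1)$ with $s\in(3/2,2)$ to strong convergence in $H^s(\Omega_1)$, and conclude by trace continuity that $\partial_z\psi_{u_\tau,1}(\cdot,-H-d)\to\partial_z\psi_{u,1}(\cdot,-H-d)$ in $L_2(D)$; together with the uniform convergence of $(\partial_w h_1)_{u_\tau}(\cdot,-H-d)$ this handles the last term. Collecting the three limits yields precisely the asserted formula, the whole analytic difficulty having in fact been absorbed into Proposition~\ref{ACDC1} and Corollary~\ref{Pam}.
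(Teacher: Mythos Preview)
Your proof is correct and follows essentially the same approach as the paper: both use that $u_t\in S$ for $t\in(0,1)$, compute $\frac{\rd}{\rd s}\mathfrak{J}(u_s)$ via Proposition~\ref{C15}, pass to the limit $s\to 0^+$ using Proposition~\ref{ACDC1} and Corollary~\ref{Pam}, and invoke Corollary~\ref{C3} for the continuity of $\mathfrak{J}$ at $s=0$. The only organizational difference is that you establish the continuity of $\mathfrak{g}$ first and then use it for the first integral, whereas the paper computes the limit of the derivative directly and records the continuity of $\mathfrak{g}$ at the end; your explicit unification of the two branches of $\mathfrak{g}$ through $\ell(u)$ and the observation $\partial_x u=0$ on $\mathcal{C}(u)$ makes the continuity argument particularly transparent.
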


\begin{proof}
Given $u\in\bar S$ and $w\in S$, note that
\begin{equation*}
u_s:= u+s(w-u)=(1-s)u+sw \in S\,,\quad s\in (0,1)\,.
\end{equation*}
Let $\psi_{u_s}$ denote the solution to \eqref{a11} associated with $u_s$ and set $\vartheta:=w-u$. Since $u_s\in S$ for $s\in (0,1)$, we obtain from Proposition~\ref{C15} that
\begin{equation*}
\begin{split}
\frac{\rd}{\rd s} \mathfrak{J}(u_s) =& -\frac{1}{2}\int_D\sigma_2 (1+\vert\partial_x u_s\vert^2)\,\big[\partial_z\psi_{u_s,2}-(\partial_z h_2)_{u_s}-(\partial_w h_2)_{u_s}\big]^2(\cdot, u_s)\,\vartheta\,\rd x\\
& +\frac{1}{2}\int_D\sigma_2 \,\big[\big((\partial_x h_2)_{u_s}\big)^2+ \big((\partial_z h_2)_{u_s}+(\partial_w h_2)_{u_s}\big)^2\big](\cdot, u_s)\,\vartheta\,\rd x\\
& -\int_D\big[\sigma_1(\partial_w h_1)_{u_s}\,\partial_z\psi_{u_s,1}\big](\cdot,-H-d)\,\vartheta\,\rd x
\end{split}
\end{equation*}
for $s\in (0,1)$. Therefore, letting $s\rightarrow 0$ we derive with the help of Proposition~\ref{ACDC1}, the $C^2$-regularity of $h_1$ and $h_2$, and \eqref{100} that
\begin{equation}\label{pppl}
\begin{split}
\lim_{s\rightarrow 0^+}\frac{\rd}{\rd s} \mathfrak{J}(u_s)=&-\int_D \mathfrak{g}(u) \vartheta\, \rd x\\
& +\frac{1}{2}\int_D\sigma_2 \,\big[\big((\partial_x h_2)_u\big)^2+ \big((\partial_z h_2)_u+(\partial_w h_2)_u\big)^2\big](\cdot, u)\,\vartheta\,\rd x\\
& -\int_D\big[\sigma_1(\partial_w h_1)_u\,\partial_z\psi_{u,1}\big](\cdot,-H-d)\,\vartheta\,\rd x\,.
\end{split}
\end{equation}
Now, Corollary~\ref{C3} guarantees that $\mathfrak{J}(u_s)\rightarrow\mathfrak{J}(u)$ as $s\rightarrow 0$,  so that
\begin{equation*}
\begin{split}
\mathfrak{J}(u_t)-\mathfrak{J}(u)= \int_0^t \frac{\rd}{\rd s} \mathfrak{J}(u_s)\,\rd s\,,\quad t\in (0,1)\,,
\end{split}
\end{equation*}
and we conclude from \eqref{pppl} that
\begin{equation*}
\begin{split}
\lim_{t\rightarrow 0^+} \frac{1}{t}\big(\mathfrak{J}(u_t)-\mathfrak{J}(u)\big)&= \lim_{t\rightarrow 0^+} \frac{1}{t}\int_0^t \frac{\rd}{\rd s} \mathfrak{J}(u_s)\,\rd s\\
&=-\int_D \mathfrak{g}(u) \vartheta\, \rd x\\
&\quad +\frac{\sigma_2}{2}\int_D \big[\big((\partial_x h_2)_u\big)^2+ \big((\partial_z h_2)_u+(\partial_w h_2)_u\big)^2\big](\cdot, u)\,\vartheta\,\rd x\\
&\quad -\int_D\big[\sigma_1(\partial_w h_1)_u\,\partial_z\psi_{u,1}\big](\cdot,-H-d)\,\vartheta\,\rd x\,.
\end{split}
 \end{equation*}
Recalling that $\vartheta=w-u$, the proof of Corollary~\ref{C17} is complete, except for the continuity of the function $\mathfrak{g}:\bar S\rightarrow L_p(D)$ for $p\in [1,\infty)$. However, this follows from  Corollary~\ref{Pam}, Proposition~\ref{ACDC1}, the continuity of the trace from $H^{s}(\Omega_1)$ to $L_p(D\times \{-H\})$ for $s\in (1-1/p,1)$, and the $C^2$-regularity of $h_1$ and $h_2$.
\end{proof}

\section{Least Energy Solution for a Stationary MEMS Model}\label{LESSMM}

We illustrate our findings on the shape derivative of the Dirichlet energy \eqref{DE}  with the existence of solutions to an elliptic variational inequality arising in the modeling of micromechanical systems (MEMS) \cite{Pel01a, PeB03}. Specifically, we consider an idealized MEMS device consisting of two plates held at different electrostatic potentials: a thin elastic plate is clamped at its boundary and suspended above a rigid ground plate, the latter being covered by a non-penetrable dielectric layer of thickness $d>0$ \cite{BG01}. Due to the potential difference between the two plates, a Coulomb force is created across the device, inducing a deformation of the elastic plate, thereby converting electrostatic energy to mechanical energy while changing the  geometry of the device. Considering a cross section of the device, the rigid plate and the dielectric layer are given by $D\times \{-H-d\}$ with $D=(-L,L)$ and
$$
 \Omega_1=D\times (-H-d,-H)\,,
$$
respectively. Denoting the deflection of the elastic plate by $u:\bar D\rightarrow [-H,\infty)$, the elastic plate is the graph 
\begin{equation*}
\mathfrak{G}(u) = \{ (x,u(x))\,:\, x\in D\}
\end{equation*}
of $u$, the latter satisfying the clamped boundary conditions 
\begin{equation}\label{clamped}
u(\pm L)=\partial_x u(\pm L)=0\,.
\end{equation} 
The space between the dielectric layer and the elastic plate is
$$
 \Omega_2(u)=\left\{(x,z)\in D\times \mathbb{R}\,:\, -H<  z <  u(x)\right\}\,,
$$
and $\Omega_1$ and $\Omega_2(u)$ are separated by the interface
$$
\Sigma(u)=\{(x,-H)\,:\, x\in D,\, u(x)>-H\}\,. 
$$
We finally set
$$
 \Omega({u})=\left\{(x,z)\in D\times \mathbb{R} \,:\, -H-d<  z <  u(x)\right\}=\Omega_1\cup  \Omega_2( {u})\cup \Sigma(u)\,,
$$
so that the geometry of the MEMS device is exactly that considered in the previous sections. The dielectric properties of the device are given by the permittivity of the dielectric layer $\Omega_1$, which is assumed to be a positive function $\sigma_1\in C^2({\overline \Omega_1})$, and the permittivity of $\Omega_2(u)$, which is taken to be a positive constant $\sigma_2$. Moreover, the two plates are held at constant potentials, being respectively taken to be zero on the rigid plate $D\times \{-H-d\}$ and equal to a positive constant $V$ on the elastic plate $\mathfrak{G}(u)$. The electrostatic potential $\psi_u$ in the device then solves the transmission problem \eqref{TMP}; that is,
\begin{align*}
\mathrm{div}(\sigma\nabla\psi_u)&=0 \quad\text{in }\ \Omega(u)\,, \\
\llbracket \psi_u \rrbracket = \llbracket \sigma \partial_z \psi_u \rrbracket &=0\quad\text{on }\ \Sigma(u)\,, \\
\psi_u&=h_u\quad\text{on }\ \partial\Omega(u)\,, 
\end{align*}
the corresponding boundary conditions being prescribed by a function $h$ satisfying \eqref{bobbybrown}-\eqref{bb}, as well as 
\begin{subequations}\label{bbbrown}
\begin{equation}\label{bobbybrown1}
h_1(x,-H-d,w)=h_2(x,w,w)- V=0\,,\quad (x,w)\in \bar{D}\times [-H,\infty)\,.
\end{equation}
Finally, the total energy 
$$
E(u):= E_m(u)+E_e(u)
$$ 
of the MEMS device is the sum of the mechanical energy $E_m(u)$ and the electrostatic energy $E_e(u)$. The former is given by
$$
E_m(u):=\frac{\beta}{2}\|\partial_x^2u\|_{L_2(D)}^2 +\left(\frac{\tau}{2}+\frac{a}{4}\|\partial_x u\|_{L_2(D)}^2\right)\|\partial_x u\|_{L_2(D)}^2
$$
with $\beta>0$, $\tau\ge 0$, and $a\ge 0$, taking into account bending and external stretching effects of the elastic plate. The electrostatic energy is
$$
E_e(u):=-\frac{1}{2}\int_{\Omega(u)} \sigma \vert\nabla \psi_u\vert^2\,\rd (x,z)\,;
$$
that is, $E_e(u):= -\mathfrak{J}(u)$, see \eqref{DE}. Introducing
\begin{equation*}
\bar{S_0} := \left\{ u\in H^2(D)\,:\, u(\pm L) = \partial_x u(\pm L)=0\,, \quad u\ge -H \text{ in } D \right\} \subset \bar{S}\,,
\end{equation*}
which takes into account that the elastic plate is clamped at its boundary, it is readily seen that $E_m(u)$ is well-defined for $u\in\bar{S_0}$, as are $\psi_u$ and $E_e(u)$ due to Theorem~\ref{Thm1}.

Equilibrium configurations of the MEMS device, if any, are then provided by critical points of the total energy $E$ in $\bar{S_0}$, and in particular by minimizers when they exist. A minimal requirement in that direction is the boundedness from below of $E$ on $\bar{S_0}$, for which the following additional assumptions on $h$ are sufficient: there are constants $m_i>0$, $i=1,2,3$, such that 
\begin{equation}
\vert \partial_x h_1(x,z,w)\vert +\vert\partial_z h_1(x,z,w)\vert  \le \sqrt{m_1+m_2 w^2}\,, \quad \vert\partial_w h_1(x,z,w)\vert \le \sqrt{m_3}\,, \label{bobbybrown5}
\end{equation}
for $(x,z,w)\in \bar D \times [-H-d,-H] \times [-H,\infty)$ and
\begin{equation} 
\vert \partial_x h_2(x,z,w)\vert +\vert\partial_z h_2(x,z,w)\vert \le \sqrt{\frac{m_1+m_2 w^2}{H+w}}\,,\quad \vert\partial_w h_2(x,z,w)\vert \le \sqrt{\frac{m_3}{H+w}}\,,\label{bobbybrown6}
\end{equation}
\end{subequations}
for $(x,z,w)\in \bar D \times [-H,\infty) \times [-H,\infty)$.

Within this framework, Theorem~\ref{Thm1} allows us to prove the existence of at least one minimal energy solution.

\begin{theorem}\label{Thm2}
Assume that $h$ satisfies \eqref{bobbybrown}-\eqref{bb} and \eqref{bbbrown} and set
$$
\mathfrak{K} :=\beta - 4L^2 \left[ (d+1) \sigma_{max}\left( 12 m_2L^2+2 m_3 \right)- \tau \right]_+ \,.
$$
If
\begin{equation}\label{bbb}
\max\{a,\mathfrak{K}\}>0\,,
\end{equation}
then the total energy $E$ has at least one minimizer $u_*$ in $\bar{S_0}$; that is, 
\begin{equation}\label{B}
E(u_*)=\min_{\bar{S_0}}E\,.
\end{equation}
\end{theorem}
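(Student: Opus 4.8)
The plan is to establish existence of a minimizer via the direct method of the calculus of variations. The strategy proceeds in three stages: first show that $E$ is bounded below on $\bar{S_0}$ under condition \eqref{bbb}, then extract a minimizing sequence that is bounded in $H^2(D)$, and finally pass to the limit using compactness and the lower semicontinuity provided by the earlier results. The main obstacle will be the lower bound for $E$, since the electrostatic energy $E_e(u) = -\mathfrak{J}(u)$ enters with an unfavorable sign and must be controlled by the coercive mechanical energy $E_m(u)$.

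\textbf{Step 1 (Lower bound on $E$).} I would first estimate $\mathfrak{J}(u)$ from above in terms of the $H^2(D)$-norm of $u$. Using Lemma~\ref{C1}, one has $\mathfrak{J}(u) \le \frac{1}{2}\int_{\Omega(u)} \sigma |\nabla h_u|^2\,\rd(x,z)$, so it suffices to bound the Dirichlet integral of the boundary extension $h_u$. Recalling from Lemma~\ref{L6} that $\nabla h_{u}$ involves $\partial_x h + \partial_x u\,\partial_w h$ and $\partial_z h$, I would insert the growth assumptions \eqref{bobbybrown5}--\eqref{bobbybrown6}. The crucial point is that on $\Omega_2(u)$ the weight $(H+w)^{-1}$ in \eqref{bobbybrown6} exactly compensates the vertical extent $H+u(x)$ of the region when integrating in $z$, so that the resulting bound on the $\Omega_2(u)$-contribution is of the form $c\int_D (m_1 + m_2 u^2 + m_3 (\partial_x u)^2)\,\rd x$, and likewise the $\Omega_1$-contribution (of fixed thickness $d$) produces $c\,d\int_D(m_1 + m_2 u^2 + m_3(\partial_x u)^2)\,\rd x$. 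Collecting constants and using $\sigma \le \sigma_{max}$, this yields an estimate of the shape
\begin{equation*}
\mathfrak{J}(u) \le \sigma_{max}(d+1)\left( c\,m_1 + 6 m_2 \|u\|_{L_2(D)}^2 + m_3 \|\partial_x u\|_{L_2(D)}^2 \right)
\end{equation*}
for a suitable numerical constant. I would then control $\|u\|_{L_2(D)}$ and the cross terms by $\|\partial_x u\|_{L_2(D)}^2$ via Poincaré's inequality on $D=(-L,L)$ (the constant being proportional to $L^2$, giving the factors $12 m_2 L^2$ and $4L^2$ appearing in $\mathfrak{K}$), thereby converting everything into $\|\partial_x u\|_{L_2(D)}^2$. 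Combining with $E_m(u) = \frac{\beta}{2}\|\partial_x^2 u\|_{L_2}^2 + (\frac{\tau}{2} + \frac{a}{4}\|\partial_x u\|_{L_2}^2)\|\partial_x u\|_{L_2}^2$ and the clamped-boundary version of Poincaré's inequality bounding $\|\partial_x u\|_{L_2}^2$ by $c L^2\|\partial_x^2 u\|_{L_2}^2$, the quantity $\mathfrak{K}$ emerges precisely as the coefficient ensuring that either the bending term (when $\mathfrak{K}>0$) or the stretching term (when $a>0$) dominates. Under \eqref{bbb} one concludes $E(u) \ge -C_0 + c_1\|u\|_{H^2(D)}^{\gamma}$ for some $\gamma \in \{2,4\}$, constants $c_1>0$, $C_0 \ge 0$, so $E$ is coercive and bounded below on $\bar{S_0}$.

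\textbf{Steps 2 and 3 (Minimizing sequence and passage to the limit).} Let $m_* := \inf_{\bar{S_0}} E > -\infty$ and pick a minimizing sequence $(u_n)_{n\ge 1}$ in $\bar{S_0}$. Coercivity from Step~1 forces $(u_n)$ to be bounded in $H^2(D)$, so after extracting a subsequence I may assume $u_n \rightharpoonup u_*$ weakly in $H^2(D)$ and, by the compact embedding of $H^2(D)$ into $C^1(\bar D)$, strongly in $C^1(\bar D)$; in particular $u_*(\pm L) = \partial_x u_*(\pm L) = 0$ and $u_* \ge -H$, so $u_* \in \bar{S_0}$. For the limit, the mechanical energy $E_m$ is weakly lower semicontinuous in $H^2(D)$ (the leading term $\|\partial_x^2 u\|_{L_2}^2$ is convex and continuous, hence weakly lsc, while the lower-order stretching terms converge under the strong $C^1$, equivalently strong $H^1$, convergence). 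For the electrostatic part, strong $C^1$-convergence of $(u_n)$ gives $u_n \to u_*$ in $H^1(D)$ with $\sup_n \|u_n\|_{H^2(D)} < \infty$, exactly the hypothesis \eqref{o1} of Corollary~\ref{C3}; that corollary yields $\mathfrak{J}(u_n) \to \mathfrak{J}(u_*)$, hence $E_e(u_n) = -\mathfrak{J}(u_n) \to -\mathfrak{J}(u_*) = E_e(u_*)$. Combining, $E(u_*) \le \liminf_{n\to\infty} E_m(u_n) + \lim_{n\to\infty} E_e(u_n) \le \liminf_{n\to\infty} E(u_n) = m_*$, and since $u_* \in \bar{S_0}$ also $E(u_*) \ge m_*$. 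Therefore $E(u_*) = m_* = \min_{\bar{S_0}} E$, establishing \eqref{B}. I expect Step~1 to be the delicate part, as it requires the precise interplay between the weighted growth bounds \eqref{bobbybrown5}--\eqref{bobbybrown6}, the Poincaré constants on $D$, and the sign structure that defines $\mathfrak{K}$; the convergence argument in Steps~2--3 is then essentially a direct application of Corollary~\ref{C3} together with standard weak lower semicontinuity.
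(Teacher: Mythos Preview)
Your proposal is correct and follows essentially the same route as the paper: the paper likewise uses Lemma~\ref{C1} together with the growth bounds \eqref{bobbybrown5}--\eqref{bobbybrown6} and the Poincar\'e inequalities \eqref{pw} to obtain coercivity of $E$ under \eqref{bbb}, and then passes to the limit along a minimizing sequence via weak $H^2(D)$-compactness, weak lower semicontinuity of $E_m$, and Corollary~\ref{C3} for the continuity of $E_e$. The only minor imprecision is your description of hypothesis \eqref{o1}: it requires $u_n\to u_*$ in $H_0^1(D)$ together with a uniform $L_\infty$-bound $-H\le u_n\le M$ (not an $H^2$-bound per se), but both follow immediately from the boundedness of $(u_n)$ in $H^2(D)$ and the embedding $H^2(D)\hookrightarrow C(\bar D)$, so the argument goes through as you describe.
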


It is yet unknown whether there is more than one equilibrium configuration or whether the minimizer provided by Theorem~\ref{Thm2} has empty or non-empty coincidence set $\mathcal{C}(u_*)$ (defined in \eqref{CS}). Even in much simpler situations as considered in \cite{LW17}, where the electrostatic potential is an explicitly computable function depending in a local way on $u$, the answer is rather complex. Indeed, minimizers may have empty or non-empty coincidence sets and may coexist with other critical points of $E$, depending on the boundary values of the electrostatic potential. We expect the same complexity in the model considered herein. 

\begin{remark}
Condition~\eqref{bbb} is obviously satisfied if $\mathfrak{K}>0$, which amounts to assuming that the applied voltage $V$ is sufficiently small compared to the dimensions of the device, see Example~\ref{Ex1} below. 
\end{remark}

Next, thanks to the analysis carried out in the previous sections, we can characterize any solution to \eqref{B} by means of a variational inequality. To this end, for $u\in \bar S$, we define the function $g(u)$ by
\begin{equation}\label{gg}
g(u)(x):=\left\{\begin{array}{ll}  \displaystyle{\frac{\sigma_2}{2} \big(1+(\partial_x u(x))^2\big)\big(\partial_z\psi_{u,2}(x,u(x))\big)^2} \,, & x\in D\setminus \mathcal{C}(u)\,,\\
\hphantom{x}\vspace{-3.5mm}\\
\displaystyle{\frac{\sigma_1(x,-H)^2}{2\sigma_2}\big(\partial_z\psi_{u,1}(x,-H)\big)^2}\,,  & x\in  \mathcal{C}(u)\,.
\end{array}\right.
\end{equation}
Actually, $g$ is nothing but the function $\mathfrak{g}$ defined in Theorems~\ref{Thm1bb} and~\ref{Thm1b}, taking into account the property 
\begin{equation}\label{f}
\partial_w h_1(x,-H-d,w) = \partial_x h_2(x,w,w) = \partial_z h_2(x,w,w) + \partial_w h_2(x,w,w)=0
\end{equation}
for $(x,w)\in D\times [-H,\infty)$, which is easily derived from \eqref{bobbybrown1}. In particular, $g:\bar S\rightarrow L_2(D)$ is continuous and represents the electrostatic force acting on the elastic plate $\mathfrak{G}(u)$.

\begin{theorem}\label{Thm3}
Assume that $h$ satisfies \eqref{bobbybrown}-\eqref{bb} and \eqref{bobbybrown1} and that there is a solution $u\in \bar{S_0}$ to the minimization problem \eqref{B}. Then $g(u)\in L_2(D)$ and $u$ is an $H^2$-weak solution to the variational inequality
\begin{equation}
\beta\partial_x^4u-(\tau+a\|\partial_x u\|_{L_2(D)}^2)\partial_x^2 u+\partial\mathbb{I}_{\bar{S_0}}(u) \ni -g(u) \;\;\text{ in }\;\; D\,, \label{bennygoodman}
\end{equation}
where $\partial\mathbb{I}_{\bar{S_0}}$ is the subdifferential of the indicator function $\mathbb{I}_{\bar S_0}$ of the closed convex subset $\bar{S_0}$ of $H^2(D)$; that is, 
$$
\int_D \left\{\beta\partial_x^2 u\,\partial_x^2 (w-u)+\big[\tau+a\|\partial_x u\|_{L_2(D)}^2\big]\partial_x u\, \partial_x(w-u)\right\}\,\rd x\ge -\int_D g(u) (w-u)\, \rd x  
$$
for all $w\in \bar{S_0}$. 
\end{theorem}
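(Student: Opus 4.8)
The plan is to exploit the minimality of $u$ together with the convexity of $\bar{S_0}$ to produce a one-sided variational inequality, and then to compute the corresponding one-sided derivative of the total energy $E=E_m+E_e$ by treating its mechanical and electrostatic parts separately. Since $\bar{S_0}$ is convex, for any $w\in\bar{S_0}$ the segment $u_t:=u+t(w-u)=(1-t)u+tw$ lies in $\bar{S_0}$ for $t\in[0,1]$, so that $E(u_t)\ge E(u)$ by \eqref{B}. Consequently $t^{-1}\big(E(u_t)-E(u)\big)\ge0$ for $t\in(0,1)$, and it suffices to show that the right-hand derivative of $t\mapsto E(u_t)$ at $t=0$ exists and to identify it.

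For the mechanical energy $E_m$, which is a polynomial in the Hilbertian norms $\|\partial_x^2\cdot\|_{L_2(D)}$ and $\|\partial_x\cdot\|_{L_2(D)}$, differentiation is elementary: $E_m$ is Fr\'echet differentiable on $H^2(D)$ with
$$\partial_u E_m(u)[\vartheta]=\int_D\big\{\beta\,\partial_x^2u\,\partial_x^2\vartheta+\big(\tau+a\|\partial_x u\|_{L_2(D)}^2\big)\partial_x u\,\partial_x\vartheta\big\}\,\rd x\,.$$
For the electrostatic energy $E_e=-\mathfrak{J}$, I would invoke Corollary~\ref{C17}, which applies since $u\in\bar{S_0}\subset\bar S$ and, for $w\in S$, yields the one-sided derivative of $\mathfrak{J}(u_t)$ at $t=0^+$. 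The decisive simplification comes from \eqref{bobbybrown1}: differentiating the identities $h_1(\cdot,-H-d,w)=0$ and $h_2(\cdot,w,w)=V$ gives \eqref{f}, which forces the two boundary terms in Corollary~\ref{C17} built from $(\partial_x h_2)_u$, $(\partial_z h_2)_u+(\partial_w h_2)_u$ and $(\partial_w h_1)_u$ to vanish, and simultaneously reduces $\mathfrak{g}(u)$ to $g(u)$ of \eqref{gg} on both $D\setminus\mathcal{C}(u)$ and $\mathcal{C}(u)$. Hence for $w\in S$ the right-hand derivative of $E_e(u_t)$ equals $\int_D g(u)(w-u)\,\rd x$, with $g(u)\in L_2(D)$ guaranteed by the continuity of $\mathfrak{g}:\bar S\to L_2(D)$. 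Adding the two contributions and using $t^{-1}\big(E(u_t)-E(u)\big)\ge0$ yields the asserted variational inequality for every $w\in\bar{S_0}\cap S$.

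The remaining step, and the only genuine obstacle, is to remove the restriction $w\in S$: Corollary~\ref{C17} furnishes the derivative of $E_e$ only along directions pointing into $S$, so when $w\in\bar{S_0}$ shares a coincidence point with $u$ the segment $u_t$ leaves $S$ and the formula is unavailable. I would circumvent this by the approximation $w_\varepsilon:=(1-\varepsilon)w$ for $\varepsilon\in(0,1)$: since $w_\varepsilon+H=(1-\varepsilon)(w+H)+\varepsilon H\ge\varepsilon H>0$ and $w_\varepsilon$ inherits the clamped boundary conditions, one has $w_\varepsilon\in\bar{S_0}\cap S$ and $w_\varepsilon\to w$ in $H^2(D)$. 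Writing the variational inequality for $w_\varepsilon$ and letting $\varepsilon\to0$ — both sides being continuous in $w$ for the $H^2(D)$-topology, as they are linear in $w-u$ with coefficients $\partial_x^2u,\partial_x u\in L_2(D)$ and $g(u)\in L_2(D)$ — extends it to all $w\in\bar{S_0}$. Finally, the inequality
$$\int_D\big\{\beta\partial_x^2u\,\partial_x^2(w-u)+\big[\tau+a\|\partial_x u\|_{L_2(D)}^2\big]\partial_x u\,\partial_x(w-u)\big\}\,\rd x\ge-\int_D g(u)(w-u)\,\rd x$$
holding for all $w\in\bar{S_0}$ means exactly that the residual (namely $-g(u)$ minus the elliptic operator applied to $u$, in weak form) belongs to $\partial\mathbb{I}_{\bar{S_0}}(u)$, by the defining property of the subdifferential of the indicator function of the convex set $\bar{S_0}$; equivalently $\beta\partial_x^4u-(\tau+a\|\partial_x u\|_{L_2(D)}^2)\partial_x^2 u+\partial\mathbb{I}_{\bar{S_0}}(u)\ni-g(u)$, which is the $H^2$-weak formulation \eqref{bennygoodman}.
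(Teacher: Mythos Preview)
Your proof is correct and follows essentially the same route as the paper: exploit convexity of $\bar{S_0}$ and minimality of $u$ to obtain a one-sided inequality, compute the directional derivative of $E_m$ directly and that of $E_e=-\mathfrak{J}$ via Corollary~\ref{C17}, simplify using \eqref{bobbybrown1} (the paper packages this simplification as Proposition~\ref{C17b}), and then pass from test functions $w\in S_0=\bar{S_0}\cap S$ to all $w\in\bar{S_0}$ by density. Your explicit approximation $w_\varepsilon=(1-\varepsilon)w$ is a concrete witness for the density of $S_0$ in $\bar{S_0}$ that the paper invokes without detail.
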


A minimizer $u$ of $E$ in $\bar{S_0}$ being a critical point of $E$ and satisfying the convex constraint $u\in \bar{S_0}$, the variational inequality \eqref{bennygoodman} is simply the corresponding Euler-Lagrange equation: it involves the derivative $\beta\partial_x^4u-( \tau+a\|\partial_x u\|_{L_2(D)}^2)\partial_x^2 u$ of the mechanical energy $E_m$ with respect to $u$, the subdifferential $\partial\mathbb{I}_{\bar{S_0}}(u)$ of the convex constraint, and the ``differential'' $g(u)$ of the electrostatic energy $E_e$  with respect to $u$, in the sense of Theorems~\ref{Thm1bb} and~\ref{Thm1b}.

\begin{remark}
Theorems~\ref{Thm2} and~\ref{Thm3} are also valid with $\bar{S}$ instead of $\bar{S_0}$, the only difference being that the minimizers of $E$ in $\bar{S}$ in Theorem~\ref{Thm3} now satisfy \eqref{bennygoodman} subject to the Navier or pinned boundary conditions $u(\pm L) = \partial_x^2 u(\pm L)=0$ instead of the clamped boundary conditions \eqref{clamped}.
\end{remark}

Before providing the proofs of Theorem~\ref{Thm2} and Theorem~\ref{Thm3}, let us give an example of a function $h$ describing the boundary conditions \eqref{TMP3} for the electrostatic potential.

\begin{example}\label{Ex1}
Let us consider the situation where $\sigma_1$ does not depend on the vertical variable $z$; that is, $\sigma_1=\sigma_1(x)$. In that case, we set
$$
h_1(x,z,w):=V\frac{\sigma_2 (H+z+d)}{\sigma_2 d+\sigma_1(x)(H+w)}\,,\quad (x,z,w)\in \bar{D}\times [-H-d,-H]\times [-H,\infty)\,,
$$
and
$$
h_2(x,z,w):=V\frac{\sigma_2 d+\sigma_1(x)(H+z)}{\sigma_2 d+\sigma_1(x)(H+w)}\,,\quad (x,z,w)\in \bar{D}\times [-H,\infty)\times [-H,\infty)\,.
$$
Then assumptions \eqref{bobbybrown}-\eqref{bb} and \eqref{bbbrown} are easily checked. Moreover, if $V$ is sufficiently small, then $\mathfrak{K}$ defined in Theorem~\ref{Thm2} is positive, hence \eqref{bbb} holds in that case.
\end{example}

\subsection{Existence of a Minimizer}

Given $u\in \bar S$ we recall that $\psi_u$ is the unique solution to the transmission problem \eqref{TMP} provided by Theorem~\ref{Thm1}. 

\begin{proof}[Proof of Theorem~\ref{Thm2}] We first note that the total energy $E$ is bounded from below and coercive. To this end, we recall the Poincar\'e and Poincar\'e-Wirtinger inequalities
\begin{equation}
\|u\|_{L_2(D)}\le \vert D\vert \|\partial_x u\|_{L_2(D)}\,, \qquad \|\partial_x u\|_{L_2(D)}\le \vert D\vert \|\partial_x^2 u\|_{L_2(D)}\,, \label{pw}
\end{equation}
which are valid for all $u\in \bar S$. Let $u\in \bar S$. It follows from \eqref{bobbybrown5}, \eqref{bobbybrown6}, Lemma~\ref{C1}, and Young's inequality that
\begin{equation*}
\begin{split}
-E_e(u)&=\frac{1}{2}\int_{\Omega(u)}\sigma\vert\nabla\psi_u\vert^2\,\rd(x,z)\le \frac{1}{2} \int_{\Omega(u)}\sigma\vert\nabla h_u\vert^2\,\rd(x,z)\\
& \le \int_{\Omega(u)} \sigma\left[\left(\partial_x h(x,z,u(x))\right)^2+\left(\partial_w h(x,z,u(x))\right)^2 (\partial_x u(x))^2\right]\,\rd (x,z)\\
&\qquad + \frac{1}{2} \int_{\Omega(u)} \sigma \left(\partial_z h(x,z,u(x))\right)^2\,\rd (x,z)\\
& \le (d+1) \sigma_{max} \int_D \left[ \frac{3}{2} (m_1+m_2u(x)^2) + m_3(\partial_x u(x))^2\right]\,\rd x\,.
\end{split}
\end{equation*}
Using \eqref{pw} we get
$$
-E_e(u)\le \frac{d+1}{2} \sigma_{max} \left[3 m_1\vert D\vert +\left(3 m_2 \vert D\vert^2+2 m_3\right)\|\partial_x u\|_{L_2(D)}^2\right]\,.
$$
Therefore,
\begin{align}
E(u) & \ge \frac{\beta}{2}\|\partial_x^2u\|_{L_2(D)}^2 +\frac{a}{4}\|\partial_x u\|_{L_2(D)}^4 - \frac{3(d+1)}{2} \sigma_{max} m_1\vert D\vert \nonumber \\
& \qquad - \left[ \frac{d+1}{2} \sigma_{max} \left(3 m_2 \vert D\vert^2+2 m_3\right)-\frac{\tau}{2}\right]\|\partial_x u\|_{L_2(D)}^2 \nonumber\\
& \ge \frac{\beta}{2}\|\partial_x^2u\|_{L_2(D)}^2 +\frac{a}{4}\|\partial_x u\|_{L_2(D)}^4 - \frac{3(d+1)}{2} \sigma_{max} m_1\vert D\vert \nonumber \\
& \qquad - \left[ \frac{d+1}{2} \sigma_{max} \left(3 m_2 \vert D\vert^2+2 m_3\right)-\frac{\tau}{2}\right] _+\|\partial_x u\|_{L_2(D)}^2 \,. \label{E1}
\end{align}
Now, if $a>0$, then Young's inequality and \eqref{E1} give
$$
E(u)\ge \frac{\beta}{2}\|\partial_x^2u\|_{L_2(D)}^2 - C_1
$$
for some constant $C_1>0$ independent of $u\in\bar S$. If $a=0$, then we infer from \eqref{bbb} with $\vert D\vert =2L$, \eqref{pw}, and \eqref{E1} that $\mathfrak{K}>0$ and 
$$
E(u)\ge \frac{\mathfrak{K}}{2} \|\partial_x^2u\|_{L_2(D)}^2 - \frac{3(d+1)}{2} \sigma_{max} m_1\vert D\vert \,.
$$
Consequently, $E$ is coercive when \eqref{bbb} is satisfied. 

Now, take a minimizing sequence $(u_j)_{j\ge 1}$ of $E$ in $\bar{S_0}\subset \bar{S}$. Then
$$
\lim_{j\rightarrow \infty} E(u_j)=\inf_{\bar{S_0}} E\,,
$$ 
and the just established coercivity of $E$ guarantees that $(u_j)_{j\ge 1}$ is bounded in $H^2(D)$. We thus may assume that $(u_j)_{j\ge 1}$ converges weakly towards some $u_*$ in $H^2(D)$ and strongly in  $H^s(D)$ for $s\in [1,2)$. Obviously $u_*\in \bar{S_0}$ and
$$
E_m(u_*)\le \liminf_{j\rightarrow\infty} E_m(u_j)\,.
$$
Moreover, since $H^2(D)$ is continuously embedded in $L_\infty(D)$, we may invoke Corollary~\ref{C3} to obtain that
$$
E_e(u_*)= \lim_{j\rightarrow\infty} E_e(u_j)\,.
$$
Consequently, $u_*$ minimizes $E$ on $\bar{S_0}$ and the proof of Theorem~\ref{Thm2} is complete. 
\end{proof}

\subsection{Euler-Lagrange Equation}

 We finally prove Theorem~\ref{Thm3} which requires deriving the Euler-Lagrange equation satisfied by any minimizer of $E$ on $\bar{S_0}$. We first observe that the additional assumption \eqref{bobbybrown1} simplifies the directional derivative with respect to $u\in \bar S$ of the electrostatic energy $E_e$, which is given in Theorems~\ref{Thm1bb} and~\ref{Thm1b}.
  
\begin{proposition}\label{C17b}
Let $u\in\bar S$ and $w\in S$. Then
\begin{equation*}
\lim_{s\rightarrow 0^+} \frac{1}{s}\big(E_e(u+s(w-u))-E_e(u)\big)= \int_D g(u) (w-u)\, \rd x\,.
\end{equation*}
\end{proposition}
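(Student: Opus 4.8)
The plan is to read this off directly from Corollary~\ref{C17}, since $E_e = -\mathfrak{J}$ by definition, so that
\begin{equation*}
\lim_{s\rightarrow 0^+}\frac{1}{s}\big(E_e(u+s(w-u))-E_e(u)\big) = -\lim_{s\rightarrow 0^+}\frac{1}{s}\big(\mathfrak{J}(u+s(w-u))-\mathfrak{J}(u)\big)\,,
\end{equation*}
and the right-hand limit is exactly the quantity computed in Corollary~\ref{C17}. The task therefore reduces to simplifying the three integrals appearing there under the extra hypothesis \eqref{bobbybrown1}.

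First I would record the consequences of \eqref{bobbybrown1}, namely the identities \eqref{f}: differentiating $h_1(x,-H-d,w)=0$ in $w$ gives $\partial_w h_1(x,-H-d,w)=0$, while differentiating $h_2(x,w,w)=V$ in $x$ and in $w$ gives $\partial_x h_2(x,w,w)=0$ and $\partial_z h_2(x,w,w)+\partial_w h_2(x,w,w)=0$, respectively, for all $(x,w)\in D\times[-H,\infty)$. Evaluating along the graph, i.e.\ taking $w=u(x)$ as the third (shape) argument, these read $(\partial_x h_2)_u(x,u(x))=0$, $(\partial_z h_2)_u(x,u(x))+(\partial_w h_2)_u(x,u(x))=0$, and $(\partial_w h_1)_u(x,-H-d)=0$.

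Then I would feed these cancellations into Corollary~\ref{C17}. The vanishing of $(\partial_x h_2)_u(x,u(x))$ and of $(\partial_z h_2)_u(x,u(x))+(\partial_w h_2)_u(x,u(x))$ makes the second integral in Corollary~\ref{C17} vanish identically, and $(\partial_w h_1)_u(x,-H-d)=0$ kills the third integral. The same two cancellations reduce $\mathfrak{g}(u)$ to $g(u)$: on $D\setminus\mathcal{C}(u)$ the bracket $\big[\partial_z\psi_{u,2}-(\partial_z h_2)_u-(\partial_w h_2)_u\big](x,u(x))$ in \eqref{ggg2} collapses to $\partial_z\psi_{u,2}(x,u(x))$, matching the first line of \eqref{gg}; and on $\mathcal{C}(u)$, where $u(x)=-H$, the bracket $\big[\tfrac{\sigma_1}{\sigma_2}\partial_z\psi_{u,1}-(\partial_z h_2)_u-(\partial_w h_2)_u\big](x,-H)$ in \eqref{ggg1} collapses to $\tfrac{\sigma_1}{\sigma_2}\partial_z\psi_{u,1}(x,-H)$, matching the second line of \eqref{gg}. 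Hence $\mathfrak{g}(u)=g(u)$ a.e.\ in $D$, so that Corollary~\ref{C17} reduces to $-\int_D g(u)(w-u)\,\rd x$ for the $\mathfrak{J}$-derivative, and the sign flip $E_e=-\mathfrak{J}$ yields the claimed formula.

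Since every step is an algebraic substitution licensed by \eqref{f}, there is no genuine obstacle here; the only point requiring care is the bookkeeping of the subscript-$u$ notation, ensuring that the third argument is consistently evaluated at $u(x)$ (which equals $-H$ precisely on $\mathcal{C}(u)$), so that \eqref{f} may legitimately be invoked with $w=u(x)$ in both the interior and the coincidence regime.
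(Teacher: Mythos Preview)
Your proposal is correct and follows essentially the same route as the paper: invoke Corollary~\ref{C17} via $E_e=-\mathfrak{J}$, derive the identities \eqref{f} from \eqref{bobbybrown1}, and use them to kill the second and third integrals and to identify $\mathfrak{g}(u)$ with $g(u)$. The paper's proof is slightly terser, citing \eqref{f} directly rather than rederiving it, but the argument is the same.
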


\begin{proof}
As already mentioned, we infer from \eqref{f} that $g(u)=\mathfrak{g}(u)\in L_2(D)$. Therefore, since $E_e(u)= -\mathfrak{J}(u)$ by \eqref{DE}, we deduce from Theorems~\ref{Thm1bb} and~\ref{Thm1b} that
\begin{equation}\label{lopp}
\begin{split}
& \lim_{s\rightarrow 0^+} \frac{1}{s}\big(E_e(u+s(w-u))-E_e(u)\big)\\
& \qquad = \int_D g(u)(x) (w-u)(x)\, \rd x\\
& \qquad\quad -\frac{1}{2}\int_D\sigma_2 \,\big[\big((\partial_x h_2)_u\big)^2+ \big((\partial_z h_2)_u+(\partial_w h_2)_u\big)^2\big](x, u(x))\,(w-u)(x)\,\rd x\\
& \qquad\quad +\int_D\big[\sigma_1(\partial_w h_1)_u\,\partial_z\psi_{u,1}\big](x,-H-d)\,(w-u)(x)\,\rd x\,.
\end{split}
\end{equation}
Now observe that the first identity of \eqref{bobbybrown1} implies 
$$
(\partial_w h_1)_u(x,-H-d)=\partial_w h_1(x,-H-d,u(x)) =0\,,\quad x\in D\,,
$$
so that the last integral on the right-hand side of \eqref{lopp} vanishes.
Moreover, the second identity of \eqref{bobbybrown1} yields 
$$
(\partial_xh_2)_u(x,u(x)) =0\,,\quad x\in D\,,
$$
which, together with \eqref{f}, implies that the second  integral on the right-hand side of \eqref{lopp} also vanishes. 
\end{proof}

\begin{proof}[Proof of Theorem~\ref{Thm3}]
Consider a minimizer $u\in\bar{S_0}$ of $E$ on $\bar{S_0}$ and fix 
\begin{equation*}
w \in S_0 :=  \left\{ v\in H^2(D)\,:\, v(\pm L) = \partial_x v(\pm L)=0\,, \quad v> -H \text{ in } D \right\} \subset S\,,
\end{equation*}
Owing to the convexity of $\bar{S_0}$, the function $u+s(w-u)=(1-s)u+sw$ belongs to $S_0$  for all $s\in (0,1]$ and the minimizing property of $u$ guarantees that
$$
0\le \liminf_{s\rightarrow 0^+} \frac{1}{ s}\big(E(u+s(w-u))-E(u)\big)\,.
$$
Proposition~\ref{C17b} then implies that
\begin{align*}
0 & \le  \int_D \left\{\beta\partial_x^2 u\,\partial_x^2 (w-u) + \left( \tau+a\|\partial_x u\|_{L_2(D)}^2 \right) \partial_x u\, \partial_x (w-u)\right\}\,\rd x \\
& \qquad + \int_D g(u) (w-u)\, \rd x
\end{align*}
for all $w\in S_0$. Since $S_0$ is dense in $\bar{S_0}$,  this inequality also holds for any $w\in \bar{S_0}$, which completes the proof of Theorem~\ref{Thm3}. 
\end{proof}

\begin{remark}\label{remEGL} In Theorem~\ref{Thm3}, a salient feature of $g(u)$, which is given by \eqref{gg} and coincides with the directional derivative of $E_e=-\mathfrak{J}$, is that it is non-negative, a property which is due to the uniform potentials applied on both the rigid plate $D\times\{-H-d\}$ and the elastic plate $\mathfrak{G}(u)$. When the applied potential on the elastic plate $\mathfrak{G}(u)$ is non-constant, the formula for the directional derivative of $E_e=-\mathfrak{J}$ provided by Theorems~\ref{Thm1bb} and~\ref{Thm1b} involves a positive term and a negative term, and its sign is not determined  \textit{ a priori}. A similar observation is made in \cite{EGL} for a  related model. In fact, if $d=0$ (that is, there is no dielectric layer) and if, instead of assuming \eqref{bobbybrown1}, the function $h$ is taken to be
$$
h(x,z,w)=\frac{H+z}{H+w} \, p(x,w)\,,\quad (x,z,w)\in \bar D\times [-H,\infty)\times [-H,\infty)\,,
$$
for a suitable function $p$, then one easily recovers the model considered in \cite{EGL} from Theorem~\ref{Thm1bb}. 
\end{remark}

\bibliographystyle{siam}
\bibliography{BG_Transmission_Model}

\begin{thebibliography}{10}

\bibitem{AADL05}
{\sc G.~Acosta, M.~G. Armentano, R.~G. Durán, and A.~L. Lombardi}, {\em
  Nonhomogeneous {N}eumann problem for the {P}oisson equation in domains with
  an external cusp}, J. Math. Anal. Appl., 310 (2005), pp.~397--411.

\bibitem{AEIII}
{\sc H.~Amann and J.~Escher}, {\em Analysis. {III}}, Birkh\"{a}user Verlag,
  Basel, 2009.

\bibitem{AmEtal}
{\sc V.~R. Ambati, A.~Asheim, J.~B. van~den Berg, Y.~van Gennip, T.~Gerasimov,
  A.~Hlod, B.~Planqu{\'e}, M.~van~der Schans, S.~van~der Stelt,
  M.~Vargas~Rivera, and E.~Vondenhoff}, {\em Some studies on the deformation of
  the membrane in an {RF MEMS} switch}, in Proceedings of the 63rd European
  Study Group Mathematics with Industry, O.~Bokhove, J.~Hurink, G.~Meinsma,
  C.~Stolk, and M.~Vellekoop, eds., CWI Syllabus, Netherlands, 1 2008, Centrum
  voor Wiskunde en Informatica, pp.~65--84.
\newblock http://eprints.ewi.utwente.nl/14950.

\bibitem{BG01}
{\sc D.~H. Bernstein and P.~Guidotti}, {\em Modeling and analysis of hysteresis
  phenomena in electrostatic zipper actuators}, in Proceedings of Modeling and
  Simulation of Microsystems 2001, Hilton Head Island, SC, 2001, pp.~306--309.

\bibitem{CDLM}
{\sc J.~Che, J.~Dzubiella, B.~Li, and J.~A. McCammon}, {\em Electrostatic free
  energy and its variations in implicit solvent models}, J. Phys. Chem. B, 112
  (2008), pp.~3058--3069.

\bibitem{CLWZ13}
{\sc L.-T. Cheng, B.~Li, M.~White, and S.~Zhou}, {\em Motion of a cylindrical
  dielectric boundary}, SIAM J. Appl. Math., 73 (2013), pp.~594--616.

\bibitem{DaM93}
{\sc G.~Dal~Maso}, {\em An introduction to {$\Gamma$}-convergence}, vol.~8 of
  Progress in Nonlinear Differential Equations and their Applications,
  Birkh\"{a}user Boston, Inc., Boston, MA, 1993.

\bibitem{EGL}
{\sc J.~Escher, P.~Gosselet, and C.~Lienstromberg}, {\em A note on model
  reduction for microelectromechanical systems}, Nonlinearity, 30 (2017),
  pp.~454--465.

\bibitem{FoLe07}
{\sc I.~Fonseca and G.~Leoni}, {\em Modern methods in the calculus of
  variations: {$L^p$} spaces}, Springer Monographs in Mathematics, Springer,
  New York, 2007.

\bibitem{FLM12}
{\sc C.~Fosco, F.~C. Lombardo, and F.~D. Mazzitelli}, {\em An improved
  proximity force approximation for electrostatics}, Ann. Phys., 327 (2012),
  pp.~2050--2059.

\bibitem{Gr85}
{\sc P.~Grisvard}, {\em Elliptic problems in nonsmooth domains}, vol.~69 of
  Classics in Applied Mathematics, Society for Industrial and Applied
  Mathematics (SIAM), Philadelphia, PA, 2011.
\newblock Reprint of the 1985 original.

\bibitem{HP05}
{\sc A.~Henrot and M.~Pierre}, {\em Shape variation and optimization}, vol.~28
  of EMS Tracts in Mathematics, European Mathematical Society (EMS),
  Z\"{u}rich, 2018.

\bibitem{LW14}
{\sc {\relax Ph}.~Lauren\c{c}ot and {\relax Ch}.~Walker}, {\em A free boundary
  problem modeling electrostatic {MEMS}: {I}. {L}inear bending effects}, Math.
  Ann., 360 (2014), pp.~307--349.

\bibitem{LW16}
\leavevmode\vrule height 2pt depth -1.6pt width 23pt, {\em A variational
  approach to a stationary free boundary problem modeling {MEMS}}, ESAIM
  Control Optim. Calc. Var., 22 (2016), pp.~417--438.

\bibitem{LW17}
\leavevmode\vrule height 2pt depth -1.6pt width 23pt, {\em A constrained model
  for {MEMS} with varying dielectric properties}, J. Elliptic Parabol. Equ., 3
  (2017), pp.~15--51.

\bibitem{LWBible}
\leavevmode\vrule height 2pt depth -1.6pt width 23pt, {\em Some singular
  equations modeling {MEMS}}, Bull. Amer. Math. Soc. (N.S.), 54 (2017),
  pp.~437--479.

\bibitem{LW18}
\leavevmode\vrule height 2pt depth -1.6pt width 23pt, {\em Heterogeneous
  dielectric properties in models for microelectromechanical systems}, SIAM J.
  Appl. Math., 78 (2018), pp.~504--530.

\bibitem{Lem77}
{\sc K.~Lemrabet}, {\em R\'{e}gularit\'{e} de la solution d'un probl\`eme de
  transmission}, J. Math. Pures Appl. (9), 56 (1977), pp.~1--38.

\bibitem{MNP00}
{\sc V.~G. Maz{'}ya, Y.~V. Netrusov, and S.~V. Poborchi\u{\i}}, {\em Boundary
  values of functions from {S}obolev spaces in some non-{L}ipschitzian
  domains}, St. Petersburg Math. J., 11 (2000), pp.~107--128.

\bibitem{Pel01a}
{\sc J.~A. Pelesko}, {\em Mathematical modeling of electrostatic {MEMS} with
  tailored dielectric properties}, SIAM J. Appl. Math., 62 (2001/02),
  pp.~888--908.

\bibitem{PeB03}
{\sc J.~A. Pelesko and D.~H. Bernstein}, {\em Modeling {MEMS} and {NEMS}},
  Chapman \& Hall/CRC, Boca Raton, FL, 2003.

\bibitem{SZ92}
{\sc J.~Soko{\l o}wski and J.-P. Zol\'{e}sio}, {\em Introduction to shape
  optimization}, vol.~16 of Springer Series in Computational Mathematics,
  Springer-Verlag, Berlin, 1992.

\bibitem{Tem}
{\sc R.~Temam}, {\em Infinite-dimensional dynamical systems in mechanics and
  physics}, vol.~68 of Applied Mathematical Sciences, Springer-Verlag, New
  York, 1988.

\bibitem{Sv93}
{\sc V.~\v{S}ver\'{a}k}, {\em On optimal shape design}, J. Math. Pures Appl.
  (9), 72 (1993), pp.~537--551.

\end{thebibliography}

\end{document}